\newtheorem{theorem}{Theorem}[section]
\newtheorem{lemma}[theorem]{Lemma}
\newtheorem{proposition}[theorem]{Proposition}
\newtheorem{corollary}[theorem]{Corollary}
\theoremstyle{definition}
\newtheorem{definition}[theorem]{Definition}
\theoremstyle{remark}
\newtheorem{remark}[theorem]{Remark}
\numberwithin{equation}{section}
\DeclareMathOperator{\cone}{cone}
\DeclareMathOperator{\face}{face}
\DeclareMathOperator{\co}{co}
\begin{document}

\setcounter{page}{1}

\title[$M$-ideals and split faces]{$M$-ideals and split faces of the quasi state space of a  non-unital ordered Banach space}

\author{ Anindya Ghatak$^1$ \MakeLowercase{and} Anil Kumar Karn$^1$}

\address{$^1$School of Mathematical Sciences, 
	National Institute of Science Education and Research, HBNI, Bhubaneswar, 
	At and P.O.- Jatni, Dist- Khurda, PIN-752050, India.} 

\email{\textcolor[rgb]{0.00,0.00,0.84}{anindya.ghatak@niser.ac.in; anilkarn@niser.ac.in}}

%\dedicatory{This paper is dedicated to Professor ABCD}

\subjclass[msc2010]{Primary 46B40; Secondary 46L05, 46L30.}

\keywords{Order smooth $p$-normed space, ordered smooth subspace, order ideal, order unit space, base normed space, \emph{M}-ideal, \emph{L}-Summand, split face, Complementary Cone.}

\date{}

\begin{abstract}
	 We characterize $M$-ideals in order smooth $\infty$-normed spaces by extending the notion of split faces of the state space to those of the quasi-state space. We also characterize approximate order unit spaces as those order smooth $\infty$-normed spaces $V$ that are $M$-ideals in $\tilde{V}.$ Here $\tilde{V}$ is the order unit space obtained by adjoining an order unit to $V.$ To prove these results, we develop an order theoretic version of the ``Alfsen-Efffros' cone decomposition theorem'' for order smooth $1$-normed spaces.  (As a quick application of this result, we sharpen a result on the extension of bounded positive linear functionals on subspaces of order smooth $\infty$-normed spaces.)
\end{abstract}
\maketitle

\section{Introduction} 

A closed subspace $W$ of a real Banach space $V$ is said to be an  {\it L-summand} if there exists a closed subspace $W'$ of $V$ such that $V = W \oplus_1 W'.$ A closed 
subspace $W$ of a real Banach space $V$ is said to be an $M$-ideal if $W^\perp$ (the annihilator of $W$) is an {\it L-summand} in $V^*.$ The notion of \emph{M}-ideals 
in a Banach space was introduced by E. M. Alfsen and E. G. Effros in \cite{AE} in 1973.  Over the years the \emph{M}-ideals have been studied extensively, resulting 
in a vast theory. This is  an important tool in Functional Analysis. For a comprehensive treatment and for references to the extensive literature on the subject one 
may refer to \cite{HWW} and suitable references therein. 

Though the notion of an \emph{M}-ideal is defined solely in terms of the norm of the Banach space, it has order theoretic germs in its origin. In fact, an \emph{M}-ideal 
in a C$^{\ast}$-algebra is precisely a two sided closed ideal in it. Similarly, an \emph{M}-ideal in an order unit space may be chacterized in terms of the split faces of the 
state space of the order unit space. Both the results are order theoretic in nature. In this paper, we propose to study \emph{M}-ideals in the (non-unital) order smooth 
$\infty$-normed spaces to underline the importance of order unit in the context of \emph{M}-ideals.

In 1951,  R. V. Kadison proved that the self adjoint part of a unital $C^*$-algebra can be represented as the space of continuous affine functions on its state space and thus forms an order unit space \cite{RVK1951}. This observation attracted several mathematicians' interest in the study of convexity theory and ordered normed spaces. In particular, in 1964, D. A. Edwards introduced the notion of base normed spaces \cite{EDW} and A. J. Ellis studied duality between order unit spaces and base normed spaces \cite{ELLIS}. For details one may refer to \cite{Alf} and \cite{GJEM} and references therein.

In 2010, the second author introduced the notion of order smooth $p$-normed spaces ($1\leq p\leq \infty$) so that every order unit space is an order smooth 
$\infty$-normed space and that every base normed space is an order smooth $1$-normed space. He studied the duality between order smooth $p$-normed 
space and order smooth $p'$-normed space, where $\frac{1}{p}+\frac{1}{p'}=1$  \cite{Karn2}. This generalizes the duality between order unit 
space and base normed space established by A. J. Ellis and thus offers a duality theory for non-unital ordered normed spaces. 

In this paper, we obtain an order theoretic version of the `Alfsen-Efffros' cone 
decomposition theorem \cite[Theorem 2.9]{AE} for order smooth $1$-normed spaces satisfying condition $(OS.1.2).$ As an immediate application of this result, we sharpen a 
result on the extension of bounded positive linear functionals on subspaces of order smooth $\infty$-normed spaces \cite[Theorem 4.3]{Karn2}. We give a characterization 
of $M$-ideals in order smooth $\infty$-normed spaces by extending the notion of split faces of the state space to those of the quasi-state space. This result is a generalization of its counterpart for order unit spaces studied by Alfsen and Effros \cite{AE}. At the end of the paper, we prove that an order smooth $\infty$-normed spaces $V$ is an approximate order unit space if and only if $V$ is an $M$-ideal in $\tilde{V}.$ Here $\tilde{V}$ is the order unit space obtained by adjoining an order unit to $V.$ 

The plan of the paper is as follows. In Section 2, we obtain an order theoretic version of the `Alfsen-Efffros' cone decomposition theorem \cite[Theorem 2.9]{AE} for order 
smooth $1$-normed spaces satisfying condition $(OS.1.2).$ We sharpen a result on the extension of bounded positive linear functionals on subspaces of order smooth 
$\infty$-normed spaces \cite[Theorem 4.3]{Karn2}. In Section 3, we discuss a characterization for $M$-ideals in order smooth $\infty$-normed spaces. In Section 4, we characterize approximate order unit spaces as those order smooth $\infty$-normed spaces $V$ that are $M$-ideals in $\tilde{V}.$ 

\subsection{Basic notations and facts}\hfill

Now, we recall some basic notions and facts which will be referred in the paper.
\subsubsection{Faces and cone}\hfill

First, we recall few facts from \cite[Part I]{AE}.   
Let $V$ be a real vector space. A non empty subset $C\subseteq V$ is called a {\it cone} if $\lambda C\subseteq C$ for all $\lambda \geq 0$ and $C+C\subseteq C.$ A cone $C$ is 
called {\it proper} if $C\cap-C=\{0\}.$ The cone $C$ is said to generate $V$, if $V = C - C$. 

A non-empty convex subset $F$ of a convex set $K$ in a real vector space $V$ is called a {\it face} if for  any $x,y\in K,$ we have $x,y \in F$ whenever  $\lambda x+(1-\lambda)y\in F$  
for some $ 0 < \lambda < 1.$ If $K$ is a convex subset of $V,$ then $\cone (K) :=\cup_{\lambda\geq 0}\lambda K$ is the smallest cone containing $S.$ 

Let $S$ be a non-empty subset of a convex set $K$. Then $\face_{K} (S)$ is the smallest 
face of $K$ containing $S$. Thus 
$$\face_K (S) = \cap \{ F: F ~\textrm{is a face of}~ K ~\textrm{containing}~ S \}.$$ 
For $S = \{ x \}$, we write, $\face_K(x)$ for $\face_K(S)$.

Now, let $V$ be a (real) normed linear space and let $V_1$ denote the closed unit  ball of 
$V.$ We say that a cone $C$ in $V$ is {\it facial} if $C = \{0\}$ or $C = \cone (F)$ for some 
proper face $F$ of $V_1.$ Any facial cone is a proper. If $v\in V$ and $v\neq 0,$ then  
$w \in  \face_{V_1} (\frac{v}{\| v \|})$ if and only if $\frac{v}{\| v \|}= \lambda w +(1-\lambda)u$ for some $\lambda \in (0,1)$ and some $u\in V_1.$ We write $C(v) := 
\cone (\face_{V_1}(\frac{v}{\| v \|}))$ for the smallest facial cone containing $v$ if $v \not= 0$. We define $C(0) = \{ 0 \}$.  

For a cone $C$ in $V$, we write 
$$C'=\{v\in V: C\cap C(v)=\{0\}\}.$$ 
It may be noted that $C'$ may not be convex, in general. 

These notions and facts can be found with details in \cite[Part I, Section 2]{AE}. 

The following two results will be used frequently in this paper. 

\begin{lemma}\label{pro-of-facl-cne}\cite[Lemma 2.3]{AE}
	Let $V$ be a normed linear space and let $u_1,\cdots, u_n\in V.$ Then the following facts are equivalent:
	\begin{enumerate}
		\item $u_1,\cdots, u_n\in C(u_1+\cdots+u_n).$
		\item $\|\Sigma_{1=1}^{n}u_i \|=\Sigma_{i=1}^{n} \|u_i \|.$
	\end{enumerate}
\end{lemma}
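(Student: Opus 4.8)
The plan is to prove the two implications separately, after first clearing away trivialities. If some $u_i=0$, it lies in every $C(w)$ (a cone contains the origin) and contributes nothing to either side, so we may assume all $u_i\neq 0$; the case $n=1$ is immediate, so assume $n\geq 2$. Write $w=u_1+\cdots+u_n$. Under either hypothesis $w\neq 0$: under (2) because $\|w\|=\sum_i\|u_i\|>0$, and under (1) because $C(0)=\{0\}$ would force every $u_i=0$. The two directions then use, respectively, the explicit description of $\face_{V_1}(w/\|w\|)$ recalled in the text, and a supporting functional at $w$ furnished by Hahn--Banach.

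For $(2)\Rightarrow(1)$ I would, for each fixed index $j$, exhibit $w/\|w\|$ as a proper convex combination in which $u_j/\|u_j\|$ is one of the two points. Set $\lambda=\|u_j\|/\|w\|$ and $g=\bigl(\sum_{i\neq j}u_i\bigr)/(\|w\|-\|u_j\|)$; the denominator equals $\sum_{i\neq j}\|u_i\|>0$ since $n\geq 2$ and all $u_i\neq 0$, the triangle inequality gives $\|g\|\leq 1$ so $g\in V_1$, and $\lambda\in(0,1)$. Then $w/\|w\|=\lambda\,(u_j/\|u_j\|)+(1-\lambda)g$, so the stated characterization yields $u_j/\|u_j\|\in\face_{V_1}(w/\|w\|)$, whence $u_j=\|u_j\|\,(u_j/\|u_j\|)\in\cone(\face_{V_1}(w/\|w\|))=C(w)$. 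As $j$ was arbitrary, (1) holds.

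For $(1)\Rightarrow(2)$ the idea is to find a single norm-one functional witnessing equality in the triangle inequality. By Hahn--Banach pick $\phi\in V^*$ with $\|\phi\|=1$ and $\phi(w)=\|w\|$, and set $F_\phi=\{x\in V_1:\phi(x)=1\}$. This is a nonempty face of $V_1$: it contains $w/\|w\|$, and if $x,y\in V_1$ with $\mu x+(1-\mu)y\in F_\phi$ for $\mu\in(0,1)$, then $\mu\phi(x)+(1-\mu)\phi(y)=1$ together with $\phi(x),\phi(y)\leq 1$ forces $\phi(x)=\phi(y)=1$. By minimality $\face_{V_1}(w/\|w\|)\subseteq F_\phi$. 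Now each nonzero $u_i\in C(w)=\cone(\face_{V_1}(w/\|w\|))$ can be written $u_i=\lambda_i f_i$ with $\lambda_i>0$ and $f_i\in\face_{V_1}(w/\|w\|)\subseteq F_\phi$; since $f_i\in F_\phi$ we get $1=\phi(f_i)\leq\|f_i\|\leq 1$, so $\|f_i\|=1$ and hence $\phi(u_i)=\lambda_i=\|u_i\|$. Summing, $\sum_i\|u_i\|=\sum_i\phi(u_i)=\phi(w)=\|w\|$, which is (2).

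I do not expect a serious obstacle: the whole argument is the interplay between the explicit description of $\face_{V_1}(w/\|w\|)$ and supporting functionals at $w$. The points that need care are the bookkeeping around zero vectors, the degenerate index in the splitting step (ensuring $\|w\|-\|u_j\|>0$), and the small observation that a point of $V_1$ on which a norm-one functional equals $1$ must have norm $1$; all of these are routine, so the main ``work'' is simply organizing the case reductions cleanly.
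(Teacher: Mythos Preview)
The paper does not supply its own proof of this lemma; it merely quotes the result from \cite[Lemma~2.3]{AE}, so there is no argument in the paper to compare against. Your proof is correct and essentially the standard one: the $(2)\Rightarrow(1)$ direction is exactly the convex-combination computation that realizes $u_j/\|u_j\|$ as an extreme ingredient of $w/\|w\|$, and the $(1)\Rightarrow(2)$ direction is the supporting-functional argument (any face of $V_1$ containing $w/\|w\|$ is contained in the exposed face $\{\phi=1\}\cap V_1$ for a norming $\phi$). The bookkeeping around zero vectors and the degenerate case $n=1$ is handled cleanly, and the observation that $\phi(f_i)=1$ with $\|f_i\|\le 1$ forces $\|f_i\|=1$ is the right way to recover $\lambda_i=\|u_i\|$. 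Nothing is missing.
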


\begin{theorem}\cite[Part I, Theorem 2.9]{AE}\label{cne-dec}
	Suppose that $C$ is a norm-closed convex cone in a Banach space $V.$ Then every $u\in V$ admits a decomposition $u=v+w, ~\| u \|= \|v \|+ \| v \|,$ where $v\in C$ and $w\in C'.$  
\end{theorem}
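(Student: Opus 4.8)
The plan is to avoid any compactness argument --- since $V$ need not be reflexive, the natural ``largest part of $C$ sitting inside $u$'' is not attained --- and instead to produce the decomposition by a greedy iteration, using completeness of $V$ only to sum an absolutely convergent series. First I would dispose of $u=0$ (take $v=w=0$) and assume $u\neq 0$. Call $v\in C$ \emph{admissible for} $a\in V$ if $\|v\|+\|a-v\|=\|a\|$ (equivalently, by Lemma~\ref{pro-of-facl-cne}, $v,\,a-v\in C(a)$), and set
\[
m(a)=\sup\{\|v\|:v\in C\text{ admissible for }a\}.
\]
Since $v=0$ is always admissible and every admissible $v$ satisfies $\|v\|=\|a\|-\|a-v\|\le\|a\|$, we have $0\le m(a)\le\|a\|<\infty$. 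The strategy is to build $u=v+w$ with $v\in C$, $\|u\|=\|v\|+\|w\|$ and, in addition, $m(w)=0$, and then to deduce $w\in C'$ from $m(w)=0$.

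For the construction, put $u_0=u$ and, given $u_{n-1}$, choose $v_n\in C$ admissible for $u_{n-1}$ with $\|v_n\|\ge\frac12 m(u_{n-1})$ --- always possible, taking $v_n=0$ when $m(u_{n-1})=0$ --- and set $u_n=u_{n-1}-v_n$. Telescoping the identities $\|u_{k-1}\|=\|v_k\|+\|u_k\|$ gives $\|u\|=\sum_{i=1}^n\|v_i\|+\|u_n\|$, and comparing this with $u=(v_1+\cdots+v_n)+u_n$ through the triangle inequality forces $\|v_1+\cdots+v_n\|=\sum_{i=1}^n\|v_i\|$. In particular $\sum_i\|v_i\|\le\|u\|<\infty$, so by completeness of $V$ the series $\sum_i v_i$ converges to some $v$, and $v\in\overline{C}=C$ because every partial sum lies in the cone $C$. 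Put $w=u-v=\lim_n u_n$; letting $n\to\infty$ in the two identities yields $\|v\|=\sum_i\|v_i\|$ and $\|u\|=\|v\|+\|w\|$. Moreover $m(u_n)\le 2\|v_{n+1}\|\to 0$, since $\sum_i\|v_i\|$ converges.

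It remains to show $w\in C'$, i.e.\ $C\cap C(w)=\{0\}$. Suppose $c\in C\cap C(w)$ with $c\neq 0$; then $w\neq 0$ (as $C(0)=\{0\}$), so $C(w)=\cone(\face_{V_1}(\widehat w))$ with $\widehat w=w/\|w\|$, and we may write $c=\mu f$ with $\mu>0$ and $f\in\face_{V_1}(\widehat w)$ (so $f\neq 0$ and $\|f\|\le 1$). By the recalled description of $\face_{V_1}$ there are $\nu\in(0,1)$ and $h\in V_1$ with $\widehat w=\nu f+(1-\nu)h$, whence $w=\nu\|w\|f+(1-\nu)\|w\|h$. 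Put $c'=\nu\|w\|f=\frac{\nu\|w\|}{\mu}\,c\in C$; then $c'\neq 0$, $w-c'=(1-\nu)\|w\|h$, and
\[
\|c'\|+\|w-c'\|=\nu\|w\|\,\|f\|+(1-\nu)\|w\|\,\|h\|\le\|w\|\le\|c'\|+\|w-c'\|,
\]
the last step being the triangle inequality; hence $c'$ is admissible for $w$ and $m(w)\ge\|c'\|>0$. The decisive point is that $c'$ is then admissible for \emph{every} $u_n$ as well: since $u_n-w=\sum_{i>n}v_i\in C$ and $\|u_n\|=\|w\|+\sum_{i>n}\|v_i\|$,
\[
\|c'\|+\|u_n-c'\|\le\|c'\|+\|w-c'\|+\sum_{i>n}\|v_i\|=\|w\|+\sum_{i>n}\|v_i\|=\|u_n\|\le\|c'\|+\|u_n-c'\|,
\]
forcing equality. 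Thus $m(u_n)\ge\|c'\|>0$ for all $n$, contradicting $m(u_n)\to 0$. Hence no such $c$ exists, so $w\in C'$, and $u=v+w$ is the asserted decomposition.

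I expect the only genuine difficulty to be the absence of compactness: the intuitively correct move is to choose a norm-maximal $v\in C$ with $u-v\in C$, but that supremum need not be attained in a general Banach space. The iteration replaces this ``maximal element'' by a convergent series of successively largest admissible pieces, and the one ingredient beyond the elementary geometry of $\face_{V_1}$ is the tail estimate $u_n-w\in C$ with additive norms --- exactly what transports admissibility of a piece for the limit $w$ back to each $u_n$, yielding the contradiction that pins down $w\in C'$. Completeness of $V$ enters only to sum $\sum_i v_i$, and norm-closedness of $C$ only to place the sum in $C$.
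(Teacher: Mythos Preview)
The paper does not give its own proof of this statement; Theorem~\ref{cne-dec} is quoted from \cite[Part~I, Theorem~2.9]{AE} and used as a tool. Your argument is correct and is, in essence, the original Alfsen--Effros proof: a greedy iteration that at each stage extracts a near-maximal admissible piece of $C$ from the current remainder, sums the resulting absolutely convergent series using completeness and closedness of $C$, and then rules out $C\cap C(w)\neq\{0\}$ by showing that any nonzero $c'\in C$ admissible for $w$ would remain admissible for every $u_n$ (via the additive tail identity $\|u_n\|=\|w\|+\sum_{i>n}\|v_i\|$), contradicting $m(u_n)\to 0$. The small technical points you rely on---that $\|\sum_{i>n}v_i\|=\sum_{i>n}\|v_i\|$ and that the face description of $\face_{V_1}(\widehat w)$ yields a genuine convex combination with $\|f\|,\|h\|\le 1$---are all in order.
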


\subsubsection{Ordered normed linear spaces} \hfill 

Now, we recall some definitions and facts about (non-unital) ordered normed linear spaces 
studied in \cite{Karn2}.

\begin{definition}\cite{Karn2}
	Let $(V,V^+)$ be a real ordered vector space such that the cone $V^+$ is proper and generating and let $||.||$ be a norm on $V$ such that $V^+$ is closed. For fixed real 
	number $p,1\leq p< \infty,$ consider the following conditions on $V$:
	
	\begin{enumerate}
		\item $(O.p.1)$ For $u,v,w$ with $u\leq v\leq w,$ we have $ \|v\|\leq (||u||^p+||w||^p)^{\frac{1}{p}}.$
		\item $(O.p.2)$ For $v\in V$ and $\epsilon> 0,$ there are $v_1,v_2\in V^+$ such that $v=v_1-v_2 $ and $(||v_1||^p+||v_2||^p)^\frac{1}{p}\leq ||v||+ \epsilon.$
		\item $(OS.p.2)$ For $v\in V,$ there are $v_1,v_2\in V^+$ such that $v=v_1-v_2$ and $||v|| \ge (||v_1||^p+||v_2||^p)^\frac{1}{p}.$
	\end{enumerate}
	For $p=\infty,$ consider the similar conditions on $V$:
	
	\begin{enumerate}
		\item $(O.\infty.1)$ For $u,v,w$ with $u\leq v\leq w,$ we have  $ ||v||\leq \max (||u||,||w||),$
		\item $(O.\infty.2)$ For $v\in V$ and $\epsilon> 0,$ there exist $v_1, v_2\in V^+$ such that $v=v_1-v_2 $ and $\max(||v_1||,||v_2||)\leq ||v||+\epsilon.$
		\item $(OS.\infty.2)$ For $v\in V,$ there are $v_1, v_2\in V^+$ such that $v=v_1-v_2$ and $||v|| \ge \max(||v_1||, ||v_2||).$
	\end{enumerate}
\end{definition} 

\begin{theorem}\cite{Karn2}\label{a1}
	Let $(V,V^+)$ be a real ordered vector space such that the cone $V^{+}$ is proper and generating. Let $||.||$ be a norm on $V$ such that $V^+$ is closed. For each 
	$p,1\leq p\leq \infty,$ we have
	\begin{enumerate}
		\item $||.||$ satisfies $(O.p.1)$ condition on $V$ if and only if $||.||^*$ satisfies the condition $(OS.p'.2)$ on the Banach dual $V^\ast.$
		\item $||.||$ satisfies the condition $(O.p.2)$ on $V$ if and only if $||.||^*$ satisfies the condition $(O.p'.1)$ on $V^\ast.$
	\end{enumerate}
\end{theorem}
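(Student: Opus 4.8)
Throughout I will order $V^*$ by the dual cone $(V^*)^+=\{\phi\in V^*:\phi\ge 0\text{ on }V^+\}$ (weak-$*$ closed, and proper since $V^+$ generates $V$), and write $\tfrac1p+\tfrac1{p'}=1$, with the usual endpoint convention pairing $\ell^\infty$ with $\ell^1$. Two of the four implications are immediate from H\"older's inequality. For ``$(OS.p'.2)$ on $V^*$ $\Rightarrow$ $(O.p.1)$ on $V$'': given $u\le v\le w$ in $V$ and $\phi\in V^*$ with $\|\phi\|^*\le1$, split $\phi=\phi_1-\phi_2$ with $\phi_i\in(V^*)^+$ and $(\|\phi_1\|^{*p'}+\|\phi_2\|^{*p'})^{1/p'}\le1$; since $\phi_1\ge0,\ v\le w$ and $\phi_2\ge0,\ u\le v$, we get $\phi(v)\le\phi_1(w)-\phi_2(u)\le\|\phi_1\|^*\|w\|+\|\phi_2\|^*\|u\|\le(\|u\|^p+\|w\|^p)^{1/p}$ by H\"older, and the same bound for $-\phi(v)$ by symmetry, so $\|v\|\le(\|u\|^p+\|w\|^p)^{1/p}$ after taking the supremum over $\phi$. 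For ``$(O.p.2)$ on $V$ $\Rightarrow$ $(O.p'.1)$ on $V^*$'': given $\phi\le\psi\le\chi$ in $V^*$, $\|v\|\le1$ and $\varepsilon>0$, split $v=v_1-v_2$ with $v_i\in V^+$ and $(\|v_1\|^p+\|v_2\|^p)^{1/p}\le1+\varepsilon$; then $\psi(v)\le\chi(v_1)-\phi(v_2)\le(1+\varepsilon)(\|\phi\|^{*p'}+\|\chi\|^{*p'})^{1/p'}$ by H\"older, and likewise for $-\psi(v)$, so $\|\psi\|^*\le(\|\phi\|^{*p'}+\|\chi\|^{*p'})^{1/p'}$ on letting $\varepsilon\downarrow0$.

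For the converse of the second equivalence the plan is to introduce the \emph{decomposition norm}
\[
N(v)=\inf\Big\{\big(\|v_1\|^p+\|v_2\|^p\big)^{1/p}:v=v_1-v_2,\ v_1,v_2\in V^+\Big\}
\]
on $V$: it is finite (because $V^+$ generates) and dominated by a multiple of $\|\cdot\|$ (a closed generating cone in a Banach space being $K$-generating, by a Baire-category argument), hence a norm continuous for $\|\cdot\|$, and $(O.p.2)$ says exactly that $N\le\|\cdot\|$, equivalently $N^*\ge\|\cdot\|^*$ on $V^*$. A direct computation gives $N^*(\phi)=\big((\phi^{\flat}_{+})^{p'}+(\phi^{\flat}_{-})^{p'}\big)^{1/p'}$ with $\phi^{\flat}_{\pm}:=\sup\{\pm\phi(u):u\in V^+,\ \|u\|\le1\}$ (the supremum collapses to the equality case of H\"older). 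To obtain $N^*(\phi)\ge\|\phi\|^*$ one invokes $(O.p'.1)$: a Hahn--Banach argument produces $\phi_1\in(V^*)^+$ with $\phi_1\ge\phi$ and $\|\phi_1\|^*=\phi^{\flat}_{+}$, and $\phi_2\in(V^*)^+$ with $\phi_2\ge-\phi$ and $\|\phi_2\|^*=\phi^{\flat}_{-}$; then $-\phi_2\le\phi\le\phi_1$ in $V^*$, so $(O.p'.1)$ gives $\|\phi\|^*\le(\|\phi_2\|^{*p'}+\|\phi_1\|^{*p'})^{1/p'}=N^*(\phi)$, as required.

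The converse of the first equivalence, ``$(O.p.1)$ on $V$ $\Rightarrow$ $(OS.p'.2)$ on $V^*$'', asks instead for a \emph{sharp} splitting: for each $\phi\in V^*$ (say $\|\phi\|^*=1$), positive $\phi_1,\phi_2$ with $\phi=\phi_1-\phi_2$ and $(\|\phi_1\|^{*p'}+\|\phi_2\|^{*p'})^{1/p'}\le1$. I would again build $\phi_1$ by Hahn--Banach, seeking it in $sB_{V^*}\cap(V^*)^+\cap(\phi+tB_{V^*})\cap(\phi+(V^*)^+)$ for an optimal pair $(s,t)$ with $s^{p'}+t^{p'}\le1$: one dominates on $V$ a sublinear functional assembled from $\|\phi\|^*$, from $\phi$, and from $V^+$, whose majorisation of the relevant linear functional is precisely what $(O.p.1)$ delivers when applied to chains $-a\le v\le b$ in $V$, and whose value turns out sharp after optimising $s,t$ through the equality case of H\"older; set $\phi_2=\phi_1-\phi$, invoking weak-$*$ compactness of norm-bounded subsets of $(V^*)^+$ if attainment needs to be argued. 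For $p=\infty$ (so $p'=1$) this is exactly the classical Jordan-type / order-unit--base-norm decomposition of Ellis, and the general-$p$ argument differs only in carrying $\ell^p$/$\ell^{p'}$ quantities in place of sums.

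The hard part, in both converse implications, is the Hahn--Banach manufacture of the positive ``parts'' of a functional with the correct norm --- extending $\phi$ to $\phi_1\in(V^*)^+$ with $\phi_1\ge\phi$ and $\|\phi_1\|^*=\phi^{\flat}_{+}$ on the nose. The sensitive points I expect to have to handle are: choosing the dominating sublinear functional so that the order hypothesis ($(O.p.1)$, respectively $(O.p'.1)$) genuinely intervenes and forces the $\ell^{p'}$-estimate rather than merely the triangle inequality; and verifying that this sublinear functional is everywhere finite, which is where one uses that $V^+$ is closed and generating (hence $K$-generating) together with the fact that the relevant dual cone is normal. Once these two constructions are in place, all four implications close up, uniformly for $1\le p\le\infty$.
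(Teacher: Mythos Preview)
The paper does not prove this theorem: it is quoted verbatim from \cite{Karn2} and used as background, so there is no ``paper's own proof'' to compare your proposal against. Any assessment of your argument must be made on its own merits, or against Karn's original paper.

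That said, a few remarks on the proposal itself. The two forward implications via H\"older are fine. For the converse of (2) your decomposition-norm strategy is the standard one and should go through; the identification of $N^*$ and the Hahn--Banach construction of $\phi_1$ with $\phi_1\ge\phi$ and $\|\phi_1\|^*=\phi^\flat_+$ are the real content, and you have only asserted them. For the converse of (1) you are even more schematic: you describe the shape of the argument (``seek $\phi_1$ in $sB_{V^*}\cap(V^*)^+\cap(\phi+tB_{V^*})\cap(\phi+(V^*)^+)$ for an optimal $(s,t)$'') and explicitly flag the sensitive points you ``expect to have to handle'', but you do not actually handle them. In particular, the claim that the majorisation of the relevant sublinear functional ``is precisely what $(O.p.1)$ delivers'' needs to be written out: one must exhibit the sublinear functional, check it is finite (this is where closedness and the generating property of $V^+$ enter), and verify the domination inequality pointwise. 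As it stands, the proposal is a correct outline of the expected method, not a proof.
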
 

\begin{definition}\cite{Karn2}
	Let $(V,V^+)$ be a real ordered vector space such that the cone $V^+$ is proper and generating and let $||.||$ be a norm on $V$ such that $V^+$ is closed. For a fixed $p,$ 
	$1\leq p\leq  \infty,$ we say that $V$ is  an order smooth $p$-normed space, if $||.||$ satisfies the conditions $(O.p.1)$ and $(O.p.2)$ on $V.$
\end{definition}
The next follows from Theorem \ref{a1} and \cite[Remark 2.3(d)]{Karn2}. 
\begin{theorem}\label{duality-of-p-thry}\cite{Karn2} 
	Let $(V,V^+)$ be a real ordered vector space such that the cone $V^+$ is proper and generating and let $||.||$ be a norm on $V$ such that $V^+$ is closed. For a fixed 
	$p, 1\leq p\leq\infty,$ $V$ is an order smooth $p$-normed space if and only if its Banach dual $V^*$ is an order smooth $p'$-normed space satisfying the condition 
	$(OS.p'.2).$ 
\end{theorem}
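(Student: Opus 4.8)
The plan is to read off the equivalence from Theorem~\ref{a1} together with one elementary observation, treating the rest as bookkeeping. Recall that, by definition, $V$ is an order smooth $p$-normed space precisely when $\|\cdot\|$ satisfies $(O.p.1)$ and $(O.p.2)$. The extra ingredient is that condition $(OS.p'.2)$ implies condition $(O.p'.2)$: if $w = w_1 - w_2$ with $w_1,w_2 \in (V^*)^+$ and $\|w\| \ge (\|w_1\|^{p'}+\|w_2\|^{p'})^{1/p'}$ (or $\|w\| \ge \max(\|w_1\|,\|w_2\|)$ when $p'=\infty$), then this same decomposition witnesses $(O.p'.2)$ for every $\epsilon>0$; this is \cite[Remark~2.3(d)]{Karn2}. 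I would also note at the outset that, under the standing hypotheses, the dual cone $(V^*)^+$ is automatically proper (a functional that is $\ge 0$ and $\le 0$ on $V^+$ vanishes on $V = V^+ - V^+$) and norm-closed, being weak-$*$ closed; so the phrase ``$V^*$ is an order smooth $p'$-normed space'' becomes a meaningful assertion as soon as one knows $(V^*)^+$ is generating, and $(OS.p'.2)$ on $V^*$ supplies exactly that.

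For the forward implication, suppose $V$ is order smooth $p$-normed, so $\|\cdot\|$ satisfies $(O.p.1)$ and $(O.p.2)$. Theorem~\ref{a1}(1) turns $(O.p.1)$ into $(OS.p'.2)$ for $\|\cdot\|^*$ on $V^*$, and Theorem~\ref{a1}(2) turns $(O.p.2)$ into $(O.p'.1)$ for $\|\cdot\|^*$. Combining $(OS.p'.2)$ with the elementary observation above yields $(O.p'.2)$ as well, and $(OS.p'.2)$ also makes $(V^*)^+$ generating; together with the automatic properness and closedness of $(V^*)^+$ this says $V^*$ is an order smooth $p'$-normed space, and it satisfies $(OS.p'.2)$ by construction.

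For the converse, suppose $V^*$ is an order smooth $p'$-normed space satisfying $(OS.p'.2)$, so $\|\cdot\|^*$ satisfies $(O.p'.1)$ and $(OS.p'.2)$. Since the assertions in Theorem~\ref{a1} are equivalences and the hypotheses required there on $V$ are precisely the standing hypotheses, I can run them in reverse: the $(O.p'.1)$ half of Theorem~\ref{a1}(2) forces $(O.p.2)$ on $V$, and the $(OS.p'.2)$ half of Theorem~\ref{a1}(1) forces $(O.p.1)$ on $V$. Hence $\|\cdot\|$ satisfies both $(O.p.1)$ and $(O.p.2)$, i.e.\ $V$ is order smooth $p$-normed.

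I do not expect a substantive obstacle here: all the real content is hidden inside Theorem~\ref{a1}, and what remains is matching each dual condition to its primal counterpart. The two points deserving a careful sentence are the well-posedness of ``order smooth $p'$-normed'' for $V^*$ (handled by the properness/closedness remark plus the generating property coming from $(OS.p'.2)$), and the endpoint $p=\infty$, $p'=1$ (and its mirror), where the defining inequalities switch to their $\max$-form; but since Theorem~\ref{a1} and \cite[Remark~2.3(d)]{Karn2} are already stated uniformly for $1\le p\le\infty$, no separate endpoint argument is required.
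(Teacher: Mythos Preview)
Your argument is correct and matches the paper's own justification exactly: the paper simply records that the result ``follows from Theorem~\ref{a1} and \cite[Remark~2.3(d)]{Karn2},'' and your proposal is precisely the unpacking of that sentence, with the added care about properness, closedness, and generation of $(V^*)^+$.
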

\subsubsection{Unital ordered spaces} \hfill 

Let $V$ be a (real) ordered space. An increasing net $\{e_\lambda\}\in V^+$ is called an approximate order unit for $V$ if for each $v \in V$ there is $r > 0$ such
that $r e_\lambda \pm v \in V^+$ for some $\lambda.$ In this case $\{e_\lambda \}$ determines a semi norm $||.||_a$ on $V$ that satisfies $(O.\infty.1)$ and
$(O.\infty. 2).$ More precisely, for $v \in V$, we define  
$$\Vert v \Vert_a = \inf \{ r > 0: r e_{\lambda} \pm v \in V^+~ \textrm{for some}~ \lambda 
\}.$$ 
We call $(V, \{e_\lambda\})$ an \emph{approximate order unit space}  if  $||.||_a$ is a norm 
on $V$ in which $V^+$ is closed. When $e_{\lambda} = e$ for each $\lambda$, $e$ is called 
an \emph{order unit}. Let $(V,V^+)$ be a real ordered vector space, with an order unit $e.$ 
We say that $V^+$ is  \emph{Archimedean}, whenever $v\in V$ with $r e + v\geq 0$ for all $r>0$ implies $v \in V^+.$ In this case, the semi norm $\Vert\cdot\Vert_e$, determined by $e$ as above, is a norm for which $V^+$ is closed. This space is called an \emph{order 
unit space} and is denoted by $(V, e)$.

Next, we describe the dual notion. Let $(V,V^+)$ be a real ordered 
vector space such that $V^+$ is proper and generating. A nonempty convex subset $B$ of 
$V^+$ is called a \emph{base} for $V^+$ if every $u \in V^+,$ $u \neq 0$ has a unique 
representation $u = \lambda u_0,$ where $u_0 \in B$ and $\lambda$ is a positive real 
number. For $v \in V$, we define 
$$\Vert v \Vert_B = \inf \{ r > 0: v \in r \co (B \cup -B) \}.$$
Then $\Vert\cdot\Vert_B$ is a semi norm on $V$. In case, when $\Vert\cdot\Vert_B$ is a 
norm on $V$ with $V^+$ is norm closed, we call $V$ a \emph{base normed space} and 
denote it by $(V, B)$. As $B$ is convex, we note that $\Vert\cdot\Vert_B$ is additive on 
$V^+$. 

We characterize an approximate order unit space among order smooth $\infty$-normed 
spaces.
\begin{proposition}\label{osi-aou}
	Let $V$ be an order smooth $\infty$-normed space. Then $V$ is an approximate order 
	unit space if and only if $S(V)$ is convex.
\end{proposition}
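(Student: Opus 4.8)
The plan is to prove the two implications separately, the forward one being essentially a computation and the backward one being where the work lies. Throughout I would use that, by Theorem~\ref{duality-of-p-thry}, $V^*$ is an order smooth $1$-normed space satisfying $(OS.1.2)$, so that (together with $(O.\infty.2)$ on $V$) one has the routine facts $B_{V^*}=\co(S(V)\cup -S(V))$, $\|v\|=\sup_{f\in S(V)}|f(v)|$ for $v\in V$, and $\|f\|=\sup\{f(u):u\in V^+,\ \|u\|\le 1\}$ for $f\in V^{*+}$; here $S(V)=\{f\in V^*:f\ge 0,\ \|f\|=1\}$. For ``$\Rightarrow$'', suppose $\{e_\lambda\}$ is an approximate order unit realizing the given norm. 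First $\|e_\lambda\|\le 1$ (take $r=1$ in the definition of $\|e_\lambda\|_a$), so for $f\in V^{*+}$ the increasing net $(f(e_\lambda))$ is bounded by $\|f\|$ and $\ell(f):=\lim_\lambda f(e_\lambda)$ exists with $\ell(f)\le\|f\|$; conversely, if $\|v\|\le 1$ then for $\varepsilon>0$ there is $\lambda$ with $(1+\varepsilon)e_\lambda\pm v\ge 0$, so $f(v)\le(1+\varepsilon)\ell(f)$, and taking the supremum over such $v$ and letting $\varepsilon\to 0$ gives $\|f\|\le\ell(f)$. Hence $\|f\|=\lim_\lambda f(e_\lambda)$ is additive on $V^{*+}$, so for $f,g\in S(V)$ and $t\in[0,1]$ we get $\|tf+(1-t)g\|=t+(1-t)=1$, i.e.\ $S(V)$ is convex.

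For ``$\Leftarrow$'' I would first note that $S(V)$ convex is equivalent to additivity of $\|\cdot\|$ on $V^{*+}$ (for nonzero $f,g\in V^{*+}$, $\tfrac{f+g}{\|f\|+\|g\|}$ is the convex combination $\tfrac{\|f\|}{\|f\|+\|g\|}\tfrac{f}{\|f\|}+\tfrac{\|g\|}{\|f\|+\|g\|}\tfrac{g}{\|g\|}$ of points of $S(V)$). The decisive step is the claim that for $w_1,w_2\in V^+$ one has $\inf\{\|e\|:e\in V,\ e\ge w_1,\ e\ge w_2\}=\max(\|w_1\|,\|w_2\|)=:M$. The inequality ``$\ge$'' is $(O.\infty.1)$. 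For ``$\le$'', I would suppose the infimum exceeds $M$, pick $M<m'<\inf$, and separate the upward-closed convex set $C'=(w_1+V^+)\cap(w_2+V^+)$ from the open ball $\{\|e\|<m'\}$: this yields $f\in V^*$ with $\sup_{\{\|e\|<m'\}}f\le\inf_{C'}f$; upward closedness of $C'$ forces $f\in V^{*+}$, and normalizing $\|f\|=1$ gives $\inf_{C'}f\ge m'$. Writing elements of $C'$ as $w_1+z$ with $z\in V^+$, $z\ge u:=w_2-w_1$, this reads $\inf\{f(z):z\in V^+,\ z\ge u\}\ge m'-f(w_1)$. Now I would apply Hahn--Banach to the sublinear functional $x\mapsto\inf\{f(z):z\in V^+,\ z\ge x\}$ to get $g\in V^*$ with $0\le g\le f$ and $g(u)=\inf\{f(z):z\in V^+,\ z\ge u\}\ge m'-f(w_1)$; setting $h:=f-g\ge 0$, additivity of $\|\cdot\|$ on $V^{*+}$ gives $\|g\|+\|h\|=1$. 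Since $g(u)=g(w_2)-g(w_1)$ and $f(w_1)=g(w_1)+h(w_1)$, we obtain $g(w_2)\ge m'-h(w_1)$, hence $m'\le g(w_2)+h(w_1)\le\|g\|\,\|w_2\|+\|h\|\,\|w_1\|\le M(\|g\|+\|h\|)=M$, contradicting $m'>M$.

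From the claim, $D:=\{u\in V^+:\|u\|<1\}$ is upward directed (and nonempty unless $V=\{0\}$), so I would take the increasing net $(e_u)_{u\in D}$ with $e_u=u$. Given $v\in V$ and $\varepsilon>0$, put $r=\|v\|+\varepsilon$ and use $(O.\infty.2)$ to write $v/r=a-b$ with $a,b\in V^+$, $\max(\|a\|,\|b\|)<1$; choosing $c\in D$ with $c\ge a,b$ gives $rc\pm v\ge 0$, so $\|v\|_a\le r$ and hence $\|v\|_a\le\|v\|$. Conversely, if $re_\lambda\pm v\ge 0$, applying $f\in S(V)$ gives $rf(e_\lambda)\ge|f(v)|$, and taking the supremum over $S(V)$ (using $\|v\|=\sup_{S(V)}|f(\cdot)|$ and $\|e_\lambda\|=\sup_{S(V)}f(e_\lambda)$) yields $r\|e_\lambda\|\ge\|v\|$ with $\|e_\lambda\|<1$, so $r>\|v\|$; thus $\|v\|_a\ge\|v\|$. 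Therefore $\|\cdot\|_a=\|\cdot\|$, $V^+$ is closed, and $(V,\{e_u\})$ is an approximate order unit space. The main obstacle, and the step I would expect to need the most careful statement, is the strong duality $\inf\{f(z):z\in V^+,\ z\ge u\}=\max\{g(u):g\in V^*,\ 0\le g\le f\}$ used above: it is precisely here that the order structure of $V$ and the additivity of the dual norm on $V^{*+}$ (equivalently, convexity of $S(V)$) combine, and it subsumes the fact that, in the absence of $S(V)$ convex, $V^+\cap B_V$ need not be upward directed.
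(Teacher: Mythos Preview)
Your proof is correct, but the backward implication takes a genuinely different route from the paper. The paper's argument for ``$\Leftarrow$'' is a two-line appeal to the literature: once the norm is additive on $V^{*+}$, one observes that $(OS.1.2)$ on $V^{*}$ coincides with the condition ``$1$-generating'' of Wong--Ng, invokes \cite[Proposition~9.5]{WK} to conclude that $V^{*}$ is a base normed space, and then quotes \cite[Theorem~9.9]{WK} to deduce that $V$ is an approximate order unit space. No explicit approximate order unit is exhibited.

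Your argument, by contrast, is self-contained: you prove directly that $D=\{u\in V^+:\|u\|<1\}$ is upward directed by establishing the sharp estimate $\inf\{\|e\|:e\ge w_1,\,e\ge w_2\}=\max(\|w_1\|,\|w_2\|)$ via separation and a Hahn--Banach/Riesz-type decomposition $g\le p$, and it is precisely at the step $\|g\|+\|h\|=\|f\|$ that the convexity of $S(V)$ enters. You then take $D$ itself as the approximate order unit and verify $\|\cdot\|_a=\|\cdot\|$. This has the advantage of pinpointing exactly where and how the hypothesis is used and of producing the approximate order unit concretely; the paper's approach has the advantage of brevity and of situating the result within the known duality between base normed spaces and approximate order unit spaces. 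The forward implication is handled the same way in both: one shows $\|f\|=\sup_\lambda f(e_\lambda)$ on $V^{*+}$, whence additivity and convexity of $S(V)$.
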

\begin{proof}
	If $(V, \{ e_{\lambda} \})$ is an approximate order unit space, then $S(V)$ is convex. In 
	fact, for any $f \in V^{\ast +}$ we have $\Vert f \Vert = \sup_{\lambda} f(e_{\lambda}).$ 
	Conversely, let $V$ be an order smooth $\infty$-normed space for which $S(V)$ is 
	convex. Then the norm is additive on $V^{\ast +}$. In fact, if $f, g \in V^{\ast +} \setminus 
	\{ 0 \}$, then $f_0 = \Vert f \Vert^{-1} f, g_0 = \Vert g \Vert^{-1} g \in S(V)$. Now, by the 
	convexity, $(\Vert f \Vert + \Vert g \Vert)^{-1} (\Vert f \Vert f_0 + \Vert g \Vert g_0) \in 
	S(V)$. Thus 
	$$\Vert f + g \Vert = \Vert (\Vert f \Vert f_0 + \Vert g \Vert g_0) \Vert = \Vert f \Vert + \Vert g \Vert.$$
	Next, as $V$ is an order smooth $\infty$-normed space, by Theorem 
	\ref{duality-of-p-thry}, $V^{\ast}$ satisfies $(OS.1.2)$. As the condition $(OS.1.2)$ is 
	identical with the condition $1$-generating as defined in \cite{WK}, it follows from 
	\cite[Proposition 9.5]{WK} that $V^{\ast}$ is a base normed space. Now, by 
	\cite[Theorem 9.9]{WK}, $V$ is an approximate order unit space. 
\end{proof}
\section{Cone-decomposition property}

In this section, we shall prove an order theoretic version of the `Alfsen-Effros' cone decomposition Theorem \ref{cne-dec} for order smooth $1$-normed spaces which satisfies $(OS.1.2).$ 
For a subset $W$ of an ordered vector space $(V, V^+)$, we write $W^{+}=W\cap {V^+}.$
\begin{theorem}\label{odr-cone-dec}
	Let $(V,V^+)$ be a complete order smooth $1$-normed space satisfying $(OS.1.2)$ and let $W$ be a closed cone in $V.$ Then for any $v\in V^+,$ there are $w\in W^+$ and $w'\in W'^{+}$ 
	such that $v=w+w'$ and $||v||=||w||+||w'||.$
\end{theorem}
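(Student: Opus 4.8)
The plan is to apply the Alfsen--Effros cone decomposition theorem (Theorem \ref{cne-dec}) directly to the closed convex cone $W$ in the Banach space $V$, obtaining a decomposition $v = w + w'$ with $w \in W$, $w' \in W'$ and $\|v\| = \|w\| + \|w'\|$. The whole content of the proof is then to exploit the hypotheses — that $V$ is order smooth $1$-normed, satisfies $(OS.1.2)$, and that $v$ is \emph{positive} — to upgrade this to a \emph{positive} decomposition, i.e.\ to arrange $w \in W^+$ and $w' \in W'^+$. So the first step is to invoke Theorem \ref{cne-dec} and record the norm-additivity $\|v\| = \|w\| + \|w'\|$; by Lemma \ref{pro-of-facl-cne} this is equivalent to $w, w' \in C(v)$, and more generally any further splitting of $w$ or $w'$ that preserves total norm will again land in $C(v)$.

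Next I would split each summand into positive and negative parts using $(OS.1.2)$: write $w = w_1 - w_2$ and $w' = w'_1 - w'_2$ with $w_i, w'_i \in V^+$ and $\|w\| \ge \|w_1\| + \|w_2\|$, $\|w'\| \ge \|w'_1\| + \|w'_2\|$. Then
\[
v = w + w' = (w_1 + w'_1) - (w_2 + w'_2),
\]
and since $v \in V^+$, the key point will be to show that the negative contributions $w_2$ and $w'_2$ must in fact cancel or vanish. Here is where order smoothness of the $1$-norm enters: from $0 \le v = (w_1+w'_1) - (w_2+w'_2)$ we get $0 \le w_2 + w'_2 \le w_1 + w'_1 + \text{(something)}$, and combining the norm inequalities above with $\|v\| = \|w\| + \|w'\|$ forces
\[
\|w_1\| + \|w_2\| + \|w'_1\| + \|w'_2\| \le \|w\| + \|w'\| = \|v\| \le \|w_1 + w'_1\| + \|w_2 + w'_2\|,
\]
and since the $1$-normed additivity (via $(O.1.2)$, i.e.\ additivity of the norm on $V^+$ in the appropriate sense) makes the right side equal to $\|w_1\| + \|w'_1\| + \|w_2\| + \|w'_2\|$, all these quantities coincide. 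In particular $\|v\| = \|w_1+w'_1\| + \|w_2+w'_2\|$ with $v = (w_1+w'_1) - (w_2+w'_2)$ a difference of positives; I would then argue that the positive part $p := w_1 + w'_1$ and negative part $n := w_2 + w'_2$ of $v$ themselves satisfy $p, n \in C(v)$ by Lemma \ref{pro-of-facl-cne} (applied to $v = p + (-n)$... one has to be a little careful with signs, but $\|p\| + \|{-n}\| = \|p\| + \|n\|$ and $v$ positive should let me conclude the relevant facial-cone containments).

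The crux — and the step I expect to be the main obstacle — is to show that the \emph{positive} pieces can be routed entirely into $W^+$ and $W'^+$ respectively, i.e.\ that the cancellation of negative parts does not destroy the membership $w \in W$, $w' \in W'$. Concretely: after absorbing the negative parts I want a decomposition $v = \tilde w + \tilde w'$ with $\tilde w \in W^+$, $\tilde w' \in W'^+$ and norm-additivity. One clean way is to work from the start with $v \in V^+$ and show $C(v)$ is itself generated by a face of $V_1$ lying in $V^+$ (using $(OS.1.2)$ and that $v \ge 0$), so that \emph{every} element of $C(v)$ is positive; then $w, w' \in C(v)$ from Step 1 already gives $w, w' \in V^+$ for free, and since $w \in W$ and $w' \in W'$ we are done with $\tilde w = w$, $\tilde w' = w'$. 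I would therefore aim to prove the lemma-level fact: \emph{if $v \in V^+$ in a complete order smooth $1$-normed space satisfying $(OS.1.2)$, then $C(v) \subseteq V^+$} — this reduces to showing that if $\frac{v}{\|v\|} = \lambda x + (1-\lambda) u$ with $x, u \in V_1$ and $\lambda \in (0,1)$, then $x \in V^+$, which one extracts by applying $(OS.1.2)$ to $x$, using the norm equalities forced by $\frac{v}{\|v\|} \in V^+_1$ together with $(O.1.1)$ to kill the negative part of $x$. Once that lemma is in hand the theorem follows immediately from Theorem \ref{cne-dec} and the observation that $w' \in W' \cap C(v)' $... no: $w' \in W'$ and $w' \in C(v) \subseteq V^+$ gives $w' \in W'^+$, and $w \in W \cap C(v) \subseteq W^+$. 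The delicate accounting is entirely in proving $C(v) \subseteq V^+$, so that is where I would spend the effort.
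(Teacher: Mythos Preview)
Your proposal is correct and, after some initial exploration, lands on exactly the paper's approach: apply the Alfsen--Effros decomposition (Theorem~\ref{cne-dec}) to get $v = w + w'$ with $\|v\| = \|w\| + \|w'\|$, observe via Lemma~\ref{pro-of-facl-cne} that $w, w' \in C(v)$, and then prove the key lemma that $C(v) \subseteq V^+$ whenever $v \in V^+$ (this is precisely the paper's Lemma~\ref{lma-fr-order-cone-dec}, and your sketch of its proof---applying $(OS.1.2)$ to the face element and using $(O.1.1)$ to force the negative part to vanish---is correct). The detour through splitting $w$ and $w'$ separately via $(OS.1.2)$ is unnecessary and you rightly abandon it; the clean route you settle on at the end is the paper's proof verbatim.
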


We shall use the following fact to prove Theorem \ref{odr-cone-dec}. 

\begin{lemma}\label{lma-fr-order-cone-dec}
	Let $(V,V^+)$ be an order smooth $1$-normed space satisfying $(OS.1.2).$ If $u\geq 0,$ then $\face_{V_1}(\frac{u}{||u||}) \subseteq V^{+}.$ 
\end{lemma}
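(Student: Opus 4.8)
The plan is to turn membership in the face into a concrete convex decomposition and then play the two decomposition axioms against condition $(O.1.1)$. After rescaling we may assume $\|u\| = 1$. Fix $w \in \face_{V_1}(\frac{u}{\|u\|})$; by the characterization of faces of $V_1$ recalled above there are $\lambda \in (0,1)$ and $z \in V_1$ with $u = \lambda w + (1-\lambda) z$. Since a face of $V_1$ is contained in $V_1$, we also have $\|w\| \le 1$. We must show $w \in V^+$.

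First I would apply $(OS.1.2)$ to \emph{both} $w$ and $z$, obtaining $w = w_1 - w_2$ and $z = z_1 - z_2$ with $w_1, w_2, z_1, z_2 \in V^+$, $\|w_1\| + \|w_2\| \le \|w\|$ and $\|z_1\| + \|z_2\| \le \|z\|$. Put $a = \lambda w_1 + (1-\lambda) z_1$ and $b = \lambda w_2 + (1-\lambda) z_2$; then $a, b \in V^+$, $u = a - b$, and by the triangle inequality
$$\|a\| + \|b\| \le \lambda(\|w_1\| + \|w_2\|) + (1-\lambda)(\|z_1\| + \|z_2\|) \le \lambda\|w\| + (1-\lambda)\|z\| \le 1 = \|u\|.$$

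The key step is to force $b = 0$. Since $b \ge 0$ and $u \ge 0$, we have $0 \le u \le a$, so $(O.1.1)$ gives $\|u\| \le \|0\| + \|a\| = \|a\|$, that is, $\|a\| \ge 1$. Combined with $\|a\| + \|b\| \le 1$, this yields $\|b\| = 0$, hence $b = 0$. From $\lambda w_2 = -(1-\lambda) z_2 \in V^+ \cap (-V^+) = \{0\}$ (using that $V^+$ is proper) and $\lambda > 0$, we conclude $w_2 = 0$, so $w = w_1 \in V^+$, as desired.

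The only point requiring care is that one must decompose the complementary element $z$ as well as $w$: applying $(OS.1.2)$ to $w$ alone leaves a stray term $(1-\lambda)z$ in $u = \lambda w_1 - \lambda w_2 + (1-\lambda)z$ that need not be positive, so $(O.1.1)$ could not be invoked. Packaging the two ``negative parts'' $w_2$ and $z_2$ into the single positive element $b$ is precisely what makes the order comparison $0 \le u \le a$ available.
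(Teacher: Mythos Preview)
Your proof is correct and follows essentially the same route as the paper's: normalize $u$, use the convex decomposition $u=\lambda w+(1-\lambda)z$, apply $(OS.1.2)$ to both pieces, bundle the positive and negative parts into $a$ and $b$, and use $(O.1.1)$ on $0\le u\le a$ to force $\|a\|\ge 1$ and hence $b=0$. The only cosmetic difference is the final step: the paper reads off $w_2=z_2=0$ directly from the collapsed norm chain $\lambda\|w_2\|+(1-\lambda)\|z_2\|=0$, whereas you pass through $b=0$ and invoke properness of $V^+$; both are equally valid.
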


\begin{proof}
	Let $u \in V^+.$ Without any loss of generality, we may assume that $\|u\|=1.$ Let $v\in \face_{V_1}(u).$ Then by the definition of $\face_{V_1}(u),$ there exists 
	$w \in V_1$ such that 
	$$u=\lambda v+(1-\lambda)w\mbox{ for some }\lambda\in (0,1).$$ 
	Since $\|u\|=1,$ we have $\|v\|=1=\|w\|.$ Also, as $V$ satisfies $(OS.1.2),$ there exist $v_1,v_2, w_1, w_2 \in V^+$ such that 
	$v =v_1-v_2$ and $w =w_1-w_2$  with  $\|v\| =\|v_1\|+\|v_2\|$ and $\|w\| =\|w_1\|+\|w_2\|.$ 
	Thus $u = u_1-u_2$ where $u_i = \lambda v_i+(1-\lambda)w_i$ for $i = 1, 2.$   
	Since $0\leq u\leq u_1$ and since $V$ is an order smooth $1$-normed space, we have 
	\begin{eqnarray*}
		1 &=& \|u\|\leq \|u_1\|\\
		&\leq &\|\lambda v_1+(1-\lambda)w_1\|\\
		&\leq &\lambda ||v_1||+(1-\lambda)||w_1||\\
		&\leq &\lambda(||v_1||+||v_2||)+(1-\lambda)(||w_1||+||w_2||)\\
		&\leq &\lambda||v||+(1-\lambda)||w|| = 1. \cr
	\end{eqnarray*} 
	Thus $v_2=0=w_2$ so that $v, w \in V^+.$ 
\end{proof}

\begin{proof}[\it Proof of Theorem \ref{odr-cone-dec}]
	Let $W$ be a closed cone of $V$ and $ u\in V^+.$ Then by Theorem \ref{cne-dec}, we have $u=v+w$ with $||u||=||v||+||w||$ for some $v \in W$ and 
	$w\in W'.$ Now, by Lemmas \ref{pro-of-facl-cne} and \ref{lma-fr-order-cone-dec}, we can conclude that $v\mbox{ and }w \in V^+.$ 
\end{proof}
A quick consequence of Lemma \ref{lma-fr-order-cone-dec} is the following:

\begin{corollary}\label{pos-face} 
	Let $(V,V^+)$ be a complete order smooth $1$-normed space satisfying $(OS.1.2).$ Then $face_{V_1}(\frac{u}{||u||}) = face_{V_1^+}(\frac{u}{||u||})$ 
	and $C(u) \subseteq V^+$ whenever $u \in V^+.$ Here $V_1^+ = V_1 \cap V^+.$
\end{corollary}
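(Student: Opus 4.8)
The plan is to derive Corollary \ref{pos-face} directly from Lemma \ref{lma-fr-order-cone-dec} by unwinding the definitions of the two faces and of $C(u)$. Fix $u \in V^+$; we may assume $\|u\| = 1$ by scaling. The inclusion $\face_{V_1^+}(u) \subseteq \face_{V_1}(u)$ is immediate from the definitions once we observe that every face of $V_1^+$ extends to (or is contained in) a face of $V_1$ — more precisely, $\face_{V_1}(u)$ is a face of $V_1$ containing $u$, and its intersection with $V_1^+$ is a face of $V_1^+$ containing $u$, hence contains the smallest such face $\face_{V_1^+}(u)$. For the reverse inclusion, Lemma \ref{lma-fr-order-cone-dec} gives $\face_{V_1}(u) \subseteq V^+$, so in fact $\face_{V_1}(u) = \face_{V_1}(u) \cap V^+ \subseteq V_1 \cap V^+ = V_1^+$; thus $\face_{V_1}(u)$ is a subset of $V_1^+$ that happens to be a face of $V_1$. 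I then need the routine check that a face of $V_1$ contained in $V_1^+$ is also a face of $V_1^+$: if $x, y \in V_1^+$ and $\lambda x + (1-\lambda) y \in \face_{V_1}(u)$ for some $\lambda \in (0,1)$, then since $x, y \in V_1$ and $\face_{V_1}(u)$ is a face of $V_1$, we get $x, y \in \face_{V_1}(u)$. Hence $\face_{V_1}(u)$ is a face of $V_1^+$ containing $u$, so it contains $\face_{V_1^+}(u)$, giving equality.

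For the second assertion, recall $C(u) = \cone(\face_{V_1}(u/\|u\|))$. Since we have just shown $\face_{V_1}(u/\|u\|) \subseteq V^+$ and $V^+$ is a cone (closed under nonnegative scaling and addition), taking the cone generated by a subset of $V^+$ stays inside $V^+$; hence $C(u) \subseteq V^+$.

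I do not anticipate a genuine obstacle here: the corollary is essentially a repackaging of Lemma \ref{lma-fr-order-cone-dec}. The only point requiring a line of care is the elementary lemma that a face of the big ball $V_1$ which lies inside the positive part $V_1^+$ is automatically a face of $V_1^+$ — this is what lets us upgrade the set inclusion $\face_{V_1}(u) \subseteq V_1^+$ to the identification $\face_{V_1}(u) = \face_{V_1^+}(u)$, and it hinges on the observation that convex combinations witnessing membership in $V_1^+$ are a fortiori convex combinations in the larger set $V_1$. Everything else — the scaling reduction, the cone computation for $C(u) \subseteq V^+$ — is immediate.
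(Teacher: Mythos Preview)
Your argument for $C(u) \subseteq V^+$ is fine, and the overall strategy of deducing everything from Lemma \ref{lma-fr-order-cone-dec} matches the paper's intent (the paper states the corollary as an immediate consequence of that lemma and gives no separate proof). However, there is a genuine logical slip in your treatment of the face equality.

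You correctly establish $\face_{V_1^+}(u) \subseteq \face_{V_1}(u)$ in your first step. But your ``reverse inclusion'' paragraph proves the \emph{same} inclusion again: showing that $\face_{V_1}(u)$ is a face of $V_1^+$ containing $u$ only tells you that it \emph{contains} the smallest such face $\face_{V_1^+}(u)$, i.e.\ $\face_{V_1^+}(u) \subseteq \face_{V_1}(u)$ once more. To obtain the genuine reverse inclusion $\face_{V_1}(u) \subseteq \face_{V_1^+}(u)$ you need the dual statement: that $\face_{V_1^+}(u)$ is a face of the \emph{larger} set $V_1$. This is not automatic, since a face of a sub-convex set need not be a face of the ambient convex set.

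The fix is short and uses what you already have. Suppose $x, y \in V_1$, $\lambda \in (0,1)$, and $z := \lambda x + (1-\lambda)y \in \face_{V_1^+}(u)$. Since $\face_{V_1^+}(u) \subseteq \face_{V_1}(u)$ (your first step) and $\face_{V_1}(u)$ is a face of $V_1$, we get $x, y \in \face_{V_1}(u) \subseteq V_1^+$, the last inclusion by Lemma \ref{lma-fr-order-cone-dec}. Now $x, y \in V_1^+$ and $\face_{V_1^+}(u)$ is a face of $V_1^+$, so $x, y \in \face_{V_1^+}(u)$. Hence $\face_{V_1^+}(u)$ is a face of $V_1$ containing $u$, and minimality of $\face_{V_1}(u)$ yields the missing inclusion.
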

We also have the following:
\begin{corollary} 
	Let $(V,V^+)$ be a complete order smooth $1$-normed space satisfying $(OS.1.2).$ Then we have $(-V^+)'=V^+, (V^+)'=-V^+.$
\end{corollary}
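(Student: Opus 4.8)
The plan is to establish the first identity $(-V^+)' = V^+$ directly, and then deduce $(V^+)' = -V^+$ by applying it to the order-reversed space. For the inclusion $V^+ \subseteq (-V^+)'$, I would take $v \in V^+$ and invoke Corollary \ref{pos-face} to get $C(v) \subseteq V^+$; since $V^+$ is proper, this gives $(-V^+) \cap C(v) \subseteq V^+ \cap (-V^+) = \{0\}$, hence $v \in (-V^+)'$. The case $v = 0$ is covered by the convention $C(0) = \{0\}$.

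For the reverse inclusion $(-V^+)' \subseteq V^+$ I would argue by contradiction. Given $v \in (-V^+)'$ with $v \notin V^+$, apply condition $(OS.1.2)$ to write $v = v_1 - v_2$ with $v_1, v_2 \in V^+$ and $\|v\| = \|v_1\| + \|v_2\|$. Note that $v_2 \neq 0$, since otherwise $v = v_1 \in V^+$. Then $\|v_1 + (-v_2)\| = \|v_1\| + \|-v_2\|$, so Lemma \ref{pro-of-facl-cne} yields $v_1, -v_2 \in C(v_1 - v_2) = C(v)$. In particular $-v_2$ is a nonzero element of $(-V^+) \cap C(v)$, contradicting $v \in (-V^+)'$. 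Hence $v \in V^+$.

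Finally, to obtain $(V^+)' = -V^+$, I would observe that $(V, -V^+)$ is again a complete order smooth $1$-normed space satisfying $(OS.1.2)$, because the conditions $(O.1.1)$, $(O.1.2)$, and $(OS.1.2)$ are all symmetric under the replacement $V^+ \mapsto -V^+$, while completeness is unaffected. Since the facial cone $C(v)$, and hence the operation $C \mapsto C'$, depends only on the norm, applying the already-established identity to this order-reversed space gives $(V^+)' = \bigl(-(-V^+)\bigr)' = -V^+$.

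I do not expect a genuine obstacle here: the substance is already packaged in Corollary \ref{pos-face} (which rests on Lemma \ref{lma-fr-order-cone-dec}) and in Lemma \ref{pro-of-facl-cne}. The one point needing a moment's care is noticing that $v \notin V^+$ forces the negative part $v_2$ in the $(OS.1.2)$-decomposition to be nonzero, so that $-v_2$ genuinely witnesses $v \notin (-V^+)'$.
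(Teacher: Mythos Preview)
Your proof is correct. The reverse inclusion $(-V^+)'\subseteq V^+$ matches the paper's argument almost verbatim (the paper phrases it as a direct deduction rather than by contradiction, but the content is identical: decompose via $(OS.1.2)$, invoke Lemma~\ref{pro-of-facl-cne} to place $-v_2$ in $C(v)$, conclude $v_2=0$).

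The one genuine difference is in the forward inclusion $V^+\subseteq(-V^+)'$. The paper establishes it by applying the order cone decomposition Theorem~\ref{odr-cone-dec} with $W=-V^+$, observing that $W^+=(-V^+)\cap V^+=\{0\}$ forces the $W^+$-component to vanish, so $u\in W'^+$. You instead go directly through Corollary~\ref{pos-face}: $C(v)\subseteq V^+$ combined with properness of $V^+$ gives $(-V^+)\cap C(v)=\{0\}$. Your route is shorter and avoids the heavier decomposition theorem (and, in fact, does not use completeness for this direction, since Corollary~\ref{pos-face} rests only on Lemma~\ref{lma-fr-order-cone-dec}). The paper's route, on the other hand, illustrates Theorem~\ref{odr-cone-dec} in action. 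For the second identity the paper simply says ``similar arguments will work'', whereas your order-reversal symmetry is a clean way to make this precise.
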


\begin{proof}
	We shall only prove $(-V^+)' = V^+,$ as similar arguments will work for the other case. Put $W = -V^+$ and let $u \in V^+.$ By Theorem \ref{odr-cone-dec}, we have, 
	$u = v + w$ with $||u|| = ||v|| + ||w||$ for some $v \in W^+, w \in W'^{+}.$ But $W^+ = W \cap V^+ = -V^+ \cap V^+ = \{0\}$ so that $u = w \in W^{'+}.$ Conversely, let $v \in W' 
	:= (-V^+)^{'}.$ Then by the definition, $C(v)\cap (-V^+) = \{0\}.$ Since $V$ satisfies $(OS.1.2),$ there are $v_1, v_2 \in V^+$ such that $v = v_1 - v_2$ and
	$||v|| = ||v_1||+|||v_2||.$ By Lemma \ref{pro-of-facl-cne}, $-v_2\in C(v).$ But $C(v)\cap (-V^+) = \{0\}$ so that $v = v_1 \in V^+.$ 
\end{proof}

We apply Theorem \ref{odr-cone-dec} to sharpen \cite[Theorem 4.3]{Karn2}. Actually, we prove positive and norm preserving extensions of positive bounded linear functionals 
without the assumption that the order smooth subspace be `strong' (\cite[Definition 3.4]{Karn2}). 

\begin{theorem}\label{order-norm-preserving-extension} 
	Let $W$ be an order smooth subspace of an order smooth $\infty$-normed space $(V,V^+).$ Then every positive bounded linear functional on $W$ has a positive norm preserving 
	extension on $V.$   
\end{theorem}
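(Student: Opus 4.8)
The plan is to obtain the extension in two stages, first extending the functional coordinate-wise along a Hahn--Banach-type dominated-extension argument and then repairing positivity by using the cone-decomposition Theorem~\ref{odr-cone-dec} in the dual. Concretely, let $f \in W^{\ast +}$ with $\|f\| = 1$. Since $W$ is an order smooth subspace of $V$, it is itself an order smooth $\infty$-normed space in the inherited order and norm, and by Theorem~\ref{duality-of-p-thry} its dual $W^{\ast}$ is an order smooth $1$-normed space satisfying $(OS.1.2)$; the same applies to $V^{\ast}$. The restriction map $r : V^{\ast} \to W^{\ast}$ is a quotient map of Banach spaces which is also positive, so $W^{\ast}$ is (isometrically, order-isomorphically) the quotient $V^{\ast}/W^{\perp}$. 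The first step is therefore to pick \emph{some} norm-preserving Hahn--Banach extension $g \in V^{\ast}$ of $f$, so that $\|g\| = \|f\| = 1$ and $r(g) = f$; the only issue is that $g$ need not be positive.

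The second step is to correct $g$ using the complementary cone construction. Let $C = W^{\perp} + V^{\ast +}$, or better, work directly with the closed cone $W^{\perp} \subseteq V^{\ast}$ in the order smooth $1$-normed space $V^{\ast}$. We want to replace $g$ by $g + h$ with $h \in W^{\perp}$ (so that $r(g+h) = f$ still), $g + h \in V^{\ast +}$, and $\|g+h\| \le 1$ (hence $= 1$, since its restriction is $f$ of norm $1$). To do this I would first produce, using $(OS.1.2)$ applied to $g$ in $V^{\ast}$, a decomposition $g = g_1 - g_2$ with $g_1, g_2 \in V^{\ast +}$ and $\|g\| = \|g_1\| + \|g_2\|$; then $r(g_1) - r(g_2) = f \ge 0$ in $W^{\ast}$ with $\|r(g_1)\| + \|r(g_2)\| \le \|g_1\| + \|g_2\| = 1$, and since $W^{\ast}$ satisfies $(OS.1.2)$ this forces (via Lemma~\ref{pro-of-facl-cne} and the positivity of $f$, cf. the argument in Corollary after Theorem~\ref{odr-cone-dec}) that $r(g_2) = 0$, i.e. $g_2 \in W^{\perp}$, and $\|r(g_1)\| = 1$. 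Thus $g_1 \in V^{\ast +}$, $r(g_1) = f$, and $\|g_1\| = \|g_2\| + \|f\| \ge \|f\| = 1$ while also $\|g_1\| \le \|g\| = 1$; hence $g_1$ is the desired positive norm-preserving extension.

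The main obstacle, and the point where the hypotheses really get used, is the splitting step: one needs the decomposition $g = g_1 - g_2$ to be \emph{norm-additive} (that is what $(OS.1.2)$ gives) and one needs the quotient map $r$ to \emph{interact well with this additivity}, namely that $r(g_2)$ lands in the positive cone of $W^{\ast}$ with small norm. Here the delicate link is that $f = r(g_1) - r(g_2)$ is a norm-additive difference in $W^{\ast}$ \emph{forced} by the inequality $\|r(g_1)\| + \|r(g_2)\| \le \|r(g_1) - r(g_2)\| = \|f\|$ together with the triangle inequality going the other way; once one knows $\|r(g_1)\| + \|r(g_2)\| = \|r(g_1) - r(g_2)\|$, Lemma~\ref{pro-of-facl-cne} says $r(g_1), -r(g_2) \in C(f)$, and by Corollary~\ref{pos-face} applied in $W^{\ast}$ (which is complete, order smooth $1$-normed, $(OS.1.2)$) we get $C(f) \subseteq W^{\ast +}$, whence $-r(g_2) \in W^{\ast +} \cap -W^{\ast +} = \{0\}$. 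I would double-check that $W^{\ast}$ is complete and satisfies the hypotheses of Corollary~\ref{pos-face}: completeness of a dual is automatic, and order-smoothness plus $(OS.1.2)$ come from Theorem~\ref{duality-of-p-thry} applied to the order smooth $\infty$-normed space $W$. A cosmetic point: one should handle $f = 0$ (resp. $\|f\|$ arbitrary) separately or by scaling, which is routine.
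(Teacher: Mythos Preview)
Your argument is correct and in fact slightly more economical than the paper's. Both proofs start the same way: take a Hahn--Banach extension $F$ of $f$ and use $(OS.1.2)$ in $V^{*}$ to write $F=F_{1}-F_{2}$ with $F_{1},F_{2}\in V^{*+}$ and $\|F\|=\|F_{1}\|+\|F_{2}\|$. From here the paper invokes the full order cone-decomposition Theorem~\ref{odr-cone-dec} in $V^{*}$ with respect to the closed cone $W^{\perp}$, splitting each $F_{i}$ as $F_{i1}+F_{i2}$ with $F_{i1}\in W^{\perp+}$, $F_{i2}\in W^{\perp'+}$, and then a norm count using $(O.1.1)$ on $W^{*}$ forces $F_{11}=F_{21}=F_{22}=0$, so $F=F_{12}\in V^{*+}$. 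You instead restrict immediately to $W^{*}$: since restriction is contractive, $\|r(F_{1})\|+\|r(F_{2})\|\le\|F_{1}\|+\|F_{2}\|=\|f\|$, and the reverse inequality is the triangle inequality for $f=r(F_{1})-r(F_{2})$; Lemma~\ref{pro-of-facl-cne} then gives $-r(F_{2})\in C(f)$, and Corollary~\ref{pos-face} in $W^{*}$ (legitimate because $W^{*}$ is a complete order smooth $1$-normed space satisfying $(OS.1.2)$, by Theorem~\ref{duality-of-p-thry} applied to the order smooth subspace $W$) yields $r(F_{2})=0$. Your route bypasses Theorem~\ref{odr-cone-dec} entirely, relying only on the elementary Lemma~\ref{lma-fr-order-cone-dec}/Corollary~\ref{pos-face}; the paper's route, by contrast, is written precisely to advertise Theorem~\ref{odr-cone-dec} as an application.

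One small slip: your displayed equality ``$\|g_{1}\|=\|g_{2}\|+\|f\|$'' is not what you have; rather $\|g_{1}\|+\|g_{2}\|=\|g\|=\|f\|$, so $\|g_{1}\|\le\|f\|$, while $\|g_{1}\|\ge\|r(g_{1})\|=\|f\|$ gives the other inequality. The conclusion $\|g_{1}\|=\|f\|$ (and hence $g_{2}=0$) is unaffected.
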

Here by an {\it order smooth subspace} $Y$ of an order smooth $p$-normed space $X$, we mean that $Y$ is also an order smooth $p$-normed space when the order and  the norm of $X$ 
is restricted to $Y$.
\begin{proof}
	Let $f$ be a positive bounded linear functional on $W.$ By the Hahn Banach theorem there exists $F\in V^{*}$ such that $||F|| = ||f||.$ We prove 
	that $F$ is positive. Since $V^{*}$ satisfy $(OS.1.2),$ 
	by Theorem \ref{duality-of-p-thry}, there are $F_1, F_2 \in V^{*+}$ such that 
	$F = F_1 - F_2$ with $||F|| = ||F_1|| + ||F_2||.$ Since $F_1, F_2 \in V ^{*+}$ and $V^*$ is complete, by 
	Theorem \ref{odr-cone-dec}, there are $F_{11}, F_{21}\in W^{\perp +}$ and $F_{12}, F_{22} \in W^{\perp '+}$ such that 
	$F_1 = F_{11} + F_{12}$ with $||F_1|| = ||F_{11}|| + ||F_{12}||$ and $F_2 = F_{21} + F_{22} $ with $||F_2|| = ||F_{21}|| + ||F_{22}||.$  
	Now $F = F_{11} - F_{21} + F_{12} - F_{22},$  where $F_{11}, F_{21} \in W^{\perp +}$ and $F_{12}, F_{22} \in W^{\perp'+}$ such that
	$||F|| = ||F_{11}|| + ||F_{21}|| + ||F_{12}|| + ||F_{22}||.$ If $f_{ij} =\left.F_{ij}\right|_W$ for all $i, j \in \{1, 2\}$ . Then $f_{11} = f_{21} = 0,$ 
	so that $f = f_{12} - f_{22}.$ Further, as $f$ is positive, we have $0\leq f\leq f_{12}.$ Thus by $(O.1.1)$ on $V^*,$ we get $\|f\| \le \|f_{12}\|.$ Therefore,
	\begin{eqnarray*}
		||f||& \leq & ||f_{12}||\\
		& \leq & ||F_{11}|| + ||F_{21}|| + ||F_{12}|| + ||F_{22}||\\
		& = & ||F|| = ||f||
	\end{eqnarray*}
	and consequently, $F_{11} = F_{21} = F_{22} = 0.$ Hence $F = F_{12} \in V^{*+}.$ 
\end{proof}

\section{$M$-ideals in order smooth $\infty$-normed spaces} 

We begin with a  characterization of $M$-ideals in a complete approximate order unit space 
due to Alfsen and Effros \cite{AE}. First, we recall the following notion.
\begin{definition}
	Let $V$ be a normed linear space and let $K$ be a non-empty, closed, convex set in $V.$
	A proper face $F$ of $K$ is said to be a split face of $K$ if $F_K^{C}$ is a proper face of $K$ such that $K = F \oplus_c F_K^{C}.$ Here 
	$$F_K^{C} = \cup \{ \face_K(v) : v \in K ~\textrm{and} ~ \face_K(v) \cap F = \emptyset \}$$ 
	and by $K = F \oplus_c F_K^{C},$ we mean that for each $v \in K$ there exist unique $u\in F, w \in F_K^{C}$ and $\lambda \in [0, 1]$ such that 
	$v = \lambda u + (1 - \lambda ) w.$
\end{definition}
\begin{theorem}\cite[Corollary 5.9, Part II]{AE}\label{M-ideal and splits face in A(K)}
	Let $W$ be a closed subspace of a complete approximate order unit space 
	$(V, \{ e_{\lambda} \}).$ Then $W$ is an $M$-ideal in $V$ if and only if $W^{\perp}\cap 
	S(V)$ is a closed split face of $S(V),$ where $S(V) = \{ f \in V^{\ast +}: \Vert f \Vert = 1 \}$ is the state space of $V.$
\end{theorem}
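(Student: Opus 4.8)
The plan is to pass to the dual and recognise that, for a complete approximate order unit space $V$, the assertion ``$W$ is an $M$-ideal'' is precisely the assertion ``$W^{\perp}$ is an $L$-summand of the base normed space $V^{\ast}$'', and then to translate an $L$-decomposition of $V^{\ast}$ into a direct convex sum decomposition of the base $S(V)$, and back. First I would record the ingredients. Since $V$ is a complete approximate order unit space, Proposition \ref{osi-aou} and Theorem \ref{duality-of-p-thry} show that $V^{\ast}$ is a complete order smooth $1$-normed space satisfying $(OS.1.2)$ whose cone $V^{\ast +}$ is proper and generating; moreover (as in the proof of Proposition \ref{osi-aou}) $V^{\ast}$ is a base normed space with base $S(V)$, so the norm of $V^{\ast}$ is additive on $V^{\ast +}$; and, applying Lemma \ref{lma-fr-order-cone-dec} and Corollary \ref{pos-face} to $V^{\ast}$, one has $C(\phi)\subseteq V^{\ast +}$ whenever $\phi\in V^{\ast +}$. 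I will also use freely that, by $(OS.1.2)$ together with the triangle inequality, every $\phi\in V^{\ast}$ admits a decomposition $\phi=\phi_{1}-\phi_{2}$ with $\phi_{1},\phi_{2}\in V^{\ast +}$ and $\|\phi\|=\|\phi_{1}\|+\|\phi_{2}\|$.

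For the necessity of the condition, suppose $V^{\ast}=W^{\perp}\oplus_{1}N$ and let $P$ be the associated (contractive) $L$-projection onto $W^{\perp}$. Given $\phi\in V^{\ast +}$, the identity $\|\phi\|=\|P\phi\|+\|\phi-P\phi\|$ forces $P\phi,\phi-P\phi\in C(\phi)$ by Lemma \ref{pro-of-facl-cne}, hence $P\phi,\phi-P\phi\in V^{\ast +}$; thus both $P$ and $I-P$ are positive. Put $F:=W^{\perp}\cap S(V)=\{f\in S(V):Pf=f\}$ and $G:=N\cap S(V)=\{f\in S(V):Pf=0\}$. For $f\in S(V)$ we have $f=Pf+(I-P)f$ with both summands in $V^{\ast +}$ and $\|Pf\|+\|(I-P)f\|=\|f\|=1$; normalising the non-zero summands writes $f$ as a convex combination of an element of $F$ and an element of $G$, and applying $P$ shows that combination is unique, so $S(V)=F\oplus_{c}G$. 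Applying $P$ to a proper convex combination of states that lands in $F$, and using that $V^{\ast +}$ is proper, shows that $F$ (and likewise $G$) is a face; a short computation with $P$ then identifies $G$ with $F^{C}$. Finally $F$ is norm-closed, being $W^{\perp}\cap S(V)$, and proper provided $W$ is a nontrivial proper subspace. Hence $F$ is a closed split face.

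For the sufficiency, assume $F:=W^{\perp}\cap S(V)$ is a closed split face with complementary face $G:=F^{C}$. The crucial step is the pair of facts $\cone(F)+\cone(G)=V^{\ast +}$ and $\mathrm{span}(\cone F)\cap\mathrm{span}(\cone G)=\{0\}$: the first is immediate from $S(V)=F\oplus_{c}G$, and for the second, if $a-b=c-d$ with $a,b\in\cone(F)$ and $c,d\in\cone(G)$, then $a+d=b+c\in V^{\ast +}$, and comparing the split decompositions of this positive element — using additivity of the norm on $V^{\ast +}$ and uniqueness in $F\oplus_{c}G$ — yields $a=b$ and $c=d$. Consequently $V^{\ast}=\mathrm{span}(\cone F)\oplus\mathrm{span}(\cone G)$ as a vector space, the associated projection $P$ onto $\mathrm{span}(\cone F)$ (sending $p+q\mapsto p$ for $p\in\cone(F)$, $q\in\cone(G)$, extended linearly) is positive, as is $I-P$, and $\mathrm{span}(\cone F)\cap V^{\ast +}=\cone(F)=\cone(W^{\perp}\cap S(V))=W^{\perp}\cap V^{\ast +}$. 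To see that $P$ is an $L$-projection, take any $\phi$ with a decomposition $\phi=\phi_{1}-\phi_{2}$, $\phi_{i}\in V^{\ast +}$, $\|\phi\|=\|\phi_{1}\|+\|\phi_{2}\|$; additivity of the norm on $V^{\ast +}$ gives $\|\phi_{i}\|=\|P\phi_{i}\|+\|(I-P)\phi_{i}\|$, and combining these with the triangle inequality yields $\|\phi\|=\|P\phi\|+\|(I-P)\phi\|$. Thus $V^{\ast}=\mathrm{span}(\cone F)\oplus_{1}\mathrm{span}(\cone G)$, and identifying $\mathrm{span}(\cone F)$ with $W^{\perp}$ shows $W^{\perp}$ is an $L$-summand, so $W$ is an $M$-ideal.

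I expect the main obstacle to be precisely the last identification in the sufficiency argument: that the range $\mathrm{span}(\cone(W^{\perp}\cap S(V)))$ of the $L$-projection coincides with the \emph{full} annihilator $W^{\perp}$, and not merely with its positive span. This is the statement that $W^{\perp}$ is positively generated by $W^{\perp}\cap S(V)$ — automatic once $W$ is known to be an $M$-ideal, as in the necessity direction, but something that must be secured here; concretely one passes to the $M$-ideal $\{a\in V:f(a)=0 \text{ for all } f\in W^{\perp}\cap S(V)\}$ determined by the split face $F$. The only other care required is routine bookkeeping of the degenerate cases $\lambda\in\{0,1\}$ in the split decomposition; beyond that the argument is a transcription of the $L$-summand/split-face dictionary for base normed spaces, with the cone decomposition Theorem \ref{cne-dec} and Lemma \ref{pro-of-facl-cne} doing the real work.
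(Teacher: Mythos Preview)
The paper does not prove this theorem; it is quoted from Alfsen--Effros \cite[Corollary~5.9, Part~II]{AE} and used as a known input. So there is no ``paper's own proof'' to compare against directly. That said, the paper's subsequent results do yield an independent derivation: Theorem~\ref{characterisation of M-ideal by positive cone order smooth infinite normed space} characterises $M$-ideals in any complete order smooth $\infty$-normed space by the conditions (1)~$W^{\perp'+}$ convex and (2)~$V^{*+}=W^{\perp+}\oplus_{1}W^{\perp'+}$, and Proposition~\ref{alt M-ideal aou} shows that in the approximate order unit case these two conditions are equivalent to $W^{\perp}\cap S(V)$ being a split face of $S(V)$. Your argument is the more classical one: you work entirely in the base-normed dual and translate the split-face decomposition of $S(V)$ directly into an $L$-decomposition of $V^{\ast}$, bypassing the intermediate cone conditions.

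The gap you flag in the sufficiency direction is genuine, and your suggested patch (passing to the pre-annihilator of $\cone F$) is circular as stated: it produces an $M$-ideal $W_{1}\supseteq W$ with $W_{1}^{\perp}=\mathrm{span}(\cone F)$, but identifying $W_{1}$ with $W$ is exactly the statement $W^{\perp}\subseteq\mathrm{span}(\cone F)$ you are after. The paper's machinery closes it cleanly. Since $F=W^{\perp}\cap S(V)$ is a face of $S(V)$ and the norm on $V^{\ast+}$ is additive, $W^{\perp}\cap V^{\ast+}_{1}=\co(F\cup\{0\})$ is a face of $V^{\ast+}_{1}$; hence by Proposition~\ref{hereditary1}(2) the subspace $W^{\perp}$ is hereditary, and by Proposition~\ref{hereditary1}(3) it inherits $(OS.1.2)$. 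That is precisely the assertion $W^{\perp}=W^{\perp+}-W^{\perp+}=\cone F-\cone F=\mathrm{span}(\cone F)$, which completes your argument. Note this step uses only that $F$ is a face, not the full split hypothesis.
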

In this section, we shall prove an analogue of this result for complete order smooth 
$\infty$-normed spaces. We have just noted that in general, in an order smooth 
$\infty$-normed space $V$, $S(V)$ may not be convex. To overcome this situation, we 
present an alternative form of Theorem \ref{M-ideal and splits face in A(K)}. 
For breivity, we shall adopt the following convention: Let $V$ be 
an order smooth $1$-normed space and let $C$ and $D$ be subsets of $V^{+}.$ We shall 
write $V^{+}=C\oplus_{1} D,$ if for $v \in C$ and $w \in D$, we have 
$\Vert v + w \Vert = \Vert v \Vert + \Vert w \Vert$ and if every element $u$ of $V^+$ can be 
written uniquely as $u = v + w$ with $v \in C$ and $w \in D.$ 
\begin{proposition}\label{alt M-ideal aou}
	Let $V$ be an approximate order unit space and  $W$ be a closed subspace of $V$. 
	Then $W^{\perp}\cap S(V)$ is a split face of $S(V)$ if and only if the following conditions 
	hold:
	\begin{enumerate}
		\item $W^{\perp'+}$ is convex;
		\item $V^{*+}= W^{\perp+}\oplus_{1}W^{\perp'+}$.
	\end{enumerate} 
\end{proposition}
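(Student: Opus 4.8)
Proposition \ref{alt M-ideal aou} is an equivalence, so I would prove the two implications separately, using throughout the fact that for an approximate order unit space $V$, the state space $S(V)$ is a base for $V^{\ast +}$ (so $V^{\ast}$ is base normed and the norm is additive on $V^{\ast +}$), hence $\cone(S(V)) = V^{\ast +}$ and every nonzero $f \in V^{\ast +}$ has a unique representation $f = \|f\| f_0$ with $f_0 \in S(V)$. The key dictionary to set up first is the correspondence between faces of the base $S(V)$ and facial cones in $V^{\ast +}$: for a convex subset $F \subseteq S(V)$, $F$ is a face of $S(V)$ iff $\cone(F)$ is a face of the cone $V^{\ast +}$ (equivalently, a hereditary subcone), and under this correspondence $(W^{\perp} \cap S(V))_{S(V)}^{C}$ corresponds to $\cone$ of the complementary cone, which I claim is exactly $W^{\perp \prime +}$. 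Establishing that $\cone\big((W^{\perp}\cap S(V))^{C}_{S(V)}\big) = W^{\perp\prime+}$ — i.e. translating the ``$\face_{S(V)}(v) \cap F = \emptyset$'' condition into ``$C(v) \cap W^{\perp} = \{0\}$'' — is the first substantive step, and it rests on Corollary \ref{pos-face} applied to $V^{\ast}$ (which satisfies $(OS.1.2)$ and is complete), giving $C(f) \subseteq V^{\ast +}$ and $\face_{V_1^{\ast}}(f/\|f\|) = \face_{V_1^{\ast+}}(f/\|f\|)$ for $f \in V^{\ast +}$; in particular facial cones of positive elements are generated by faces of the \emph{positive} part of the unit ball, and since $S(V)$ is a base, faces of $S(V)$ and facial cones inside $V^{\ast +}$ match up.

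\textbf{($\Rightarrow$)} Suppose $W^{\perp} \cap S(V) =: F$ is a split face of $S(V)$, with complementary face $F^{C} := F^{C}_{S(V)}$ and $S(V) = F \oplus_{c} F^{C}$. First I would verify (1): $F^{C}$ is by hypothesis a \emph{face} of $S(V)$, hence convex, and $\cone(F^{C})$ is a convex cone; I then identify $\cone(F^{C}) = W^{\perp\prime+}$ by the first step, so $W^{\perp\prime+}$ is convex. For (2), take $u \in V^{\ast +} \setminus \{0\}$ and write $u = \|u\| u_0$ with $u_0 \in S(V)$; applying $S(V) = F \oplus_{c} F^{C}$ gives $u_0 = \lambda a + (1-\lambda) b$ with $a \in F$, $b \in F^{C}$, $\lambda \in [0,1]$ uniquely, whence $u = v + w$ with $v := \|u\|\lambda a \in \cone(F) = W^{\perp +}$ and $w := \|u\|(1-\lambda) b \in \cone(F^{C}) = W^{\perp\prime+}$. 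Additivity of the norm on $V^{\ast +}$ gives $\|u\| = \|v\| + \|w\|$; uniqueness of the decomposition follows by running the base representation backwards (normalize $v + w$, use uniqueness in the split-face decomposition, noting the degenerate cases $v = 0$ or $w = 0$ separately). Hence $V^{\ast +} = W^{\perp +} \oplus_{1} W^{\perp\prime+}$.

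\textbf{($\Leftarrow$)} Conversely, assume (1) and (2). Since $V$ satisfies $(O.\infty.1)$ and $(O.\infty.2)$, $W^{\perp}$ is itself an order smooth $1$-normed space (as a weak-$\ast$ closed, hence closed, order ideal-type subspace — or more directly $W^{\perp +} = W^{\perp} \cap V^{\ast +}$ is a face of $V^{\ast +}$ by condition (2), which I would confirm from the $\oplus_1$ decomposition: if $v \le u \in W^{\perp +}$ in $V^{\ast +}$ then decomposing $v$ forces its $W^{\perp\prime+}$-part to $0$). Then $F := W^{\perp} \cap S(V) = W^{\perp +} \cap S(V)$ is convex (intersection of the convex $W^{\perp +}$ with the convex $S(V)$) and is a face of $S(V)$ because $W^{\perp +}$ is a face of $V^{\ast +}$ and $S(V)$ is a base. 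Dually $F^{C}_{\text{alg}} := W^{\perp\prime+} \cap S(V)$ is convex by (1), and is a face of $S(V)$ since (2) makes $W^{\perp\prime+}$ a face of $V^{\ast +}$ (again read off from the $\oplus_1$ splitting). Finally $S(V) = F \oplus_{c} (W^{\perp\prime+}\cap S(V))$: given $g \in S(V) \subseteq V^{\ast +}$, write $g = v + w$ uniquely with $v \in W^{\perp +}$, $w \in W^{\perp\prime+}$ and $\|v\| + \|w\| = \|g\| = 1$; set $\lambda = \|v\|$, $u_0 = v/\|v\| \in F$, $w_0 = w/\|w\| \in W^{\perp\prime+} \cap S(V)$ (with the obvious reading when $\lambda \in \{0,1\}$), giving $g = \lambda u_0 + (1-\lambda) w_0$, and uniqueness of $\lambda, u_0, w_0$ follows from uniqueness in (2). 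It remains only to check $W^{\perp\prime+} \cap S(V)$ equals the set-theoretic $F^{C}_{S(V)}$ from the definition of split face — this is precisely the first step's identification restricted to the base — so $F$ is a split face.

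\textbf{Main obstacle.} The routine parts are the normalizations and the degenerate $\lambda \in \{0,1\}$ bookkeeping. The genuine content is the first step: showing that $\face_{S(V)}(f_0) \cap (W^{\perp}\cap S(V)) = \emptyset$ is equivalent to $C(f) \cap W^{\perp} = \{0\}$ (so that $\cone$ of the complementary face is exactly $W^{\perp\prime+}$), which is where the order-theoretic input — Corollary \ref{pos-face} for $V^{\ast}$, itself a consequence of the order cone-decomposition Theorem \ref{odr-cone-dec} — is essential, since it is what forces facial cones of positive functionals to live inside $V^{\ast +}$ and to be generated by positive faces, making the passage between ``faces of the base $S(V)$'' and ``$C'$ computed in all of $V^{\ast}$'' legitimate.
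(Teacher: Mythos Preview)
Your proposal is correct and follows essentially the same approach as the paper: you first establish the identification $(W^{\perp}\cap S(V))^{C}_{S(V)} = W^{\perp\prime+}\cap S(V)$ via Corollary~\ref{pos-face} (the paper records this as observation~($\#$)), and then in each direction pass between the split-face decomposition on $S(V)$ and the $\oplus_1$-decomposition on $V^{\ast+}$ by normalizing and using that $S(V)$ is a base for $V^{\ast+}$. Your verification that $W^{\perp+}$ and $W^{\perp\prime+}$ are faces of $V^{\ast+}$ directly from conditions~(1) and~(2) is exactly the content of the paper's Proposition~\ref{face}, which it invokes at the same point.
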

\begin{proof}
	Let us observe that 
	$$(\#) \qquad (W^{\perp +} \cap S(V))_{S(V)}^C = W^{\perp' +} \cap S(V).$$ 
	To see  this, we let $f \in W^{\perp'+} \cap S(V)$. Then $C(f) \cap W^{\perp} 
	= \{ 0 \}$ with $\Vert f \Vert = 1$. Then by the definition of $C(f)$ and following 
	Corollary  \ref{pos-face}, we may deduce that $\face_{S(V)} (f) \cap W^{\perp +} = 
	\emptyset$. Thus $f \in (W^{\perp +} \cap S(V))_{S(V)}^C$. Now tracing back the proof, we may conclude that (\#) holds. 
	
	Now first, we assume that $W^{\perp}\cap S(V)$ is a split face of $S(V)$. We show that 
	conditions (1) and (2) hold. Now, we prove (1). For this let 
	$f, g\in W^{\perp'+}, \alpha\in (0,1).$ Then $\Vert f \Vert^{-1} f, \Vert g \Vert^{-1} g \in 
	W^{\perp' +} \cap S(V)$. Thus by the convexity of  $W^{\perp' +} \cap S(V) = (W^{\perp +} \cap S(V))_{S(V)}^C$, we get  
	$$(\alpha \Vert f\Vert + (1-\alpha)\Vert g\Vert)^{-1} \{ \alpha \Vert f\Vert (\Vert f\Vert^{-1}f)+ (1-\alpha) \Vert g \Vert (\Vert g\Vert^{-1}g) \} \in W^{\perp' +} \cap S(V).$$
	Therefore, $\alpha \Vert f\Vert (\Vert f\Vert^{-1}f)+ (1-\alpha) \Vert g \Vert (\Vert g\Vert^{-1}g) \in W^{\perp' +}$ so that (1) holds. To prove (2), let $f \in V^{\ast +} \setminus \{ 0 \}$. Then $\Vert f \Vert^{-1} f \in S(V)$. Since  $W^{\perp +} \cap S(V)$ is 
	a split face of $S(V)$, we have $S(V) = W^{\perp +} \cap S(V) \oplus_c (W^{\perp +} \cap 
	S(V))_{S(V)}^C = W^{\perp +} \cap S(V) \oplus_c (W^{\perp' +} \cap S(V)$. Thus there exist 
	unique $g_0 \in W^{\perp +} \cap S(V)$ and $h_0 \in W^{\perp' +} \cap S(V)$ and 
	$\lambda \in [0, 1]$ such that $\Vert f \Vert^{-1} f = \lambda g_0 + (1 - \lambda) h_0$. 
	Then $f = g + h$ where $g = \lambda \Vert f \Vert g_0 \in W^{\perp +}$ and 
	$h = \lambda \Vert f \Vert h_0 \in W^{\perp' +}$.
	
	Next, assume that conditions (1) and (2) hold. We show that $F = W^{\perp +} \cap 
	S(V)$ is a split face of $S(V)$. Put $G=W^{\perp'+}\cap S(V).$ Since $W^{\perp+}, W^{\perp'+}$ are faces of $V^{*+},$ by Proposition \ref{face}, we get that $F$ and $G$ are faces of $S(V).$  Also, as $W^{\perp} \cap W^{\perp'}=\{0\},$ we may conclude that $F\cap G=\emptyset$.  
	We shall prove that $S(V)=F\oplus_{c} G.$ It suffices to show that $S(V)\subseteq F\oplus_{c} G.$ Let $f\in S(V)\setminus F\cup G \subseteq V^{*+}.$ By (2), there exist unique $g_{0}\in W^{\perp+}$ and $h_{0}\in W^{\perp'+}$ such that $f=g_{0}+h_{0}.$
	Thus $g=\frac{g_{0}}{\Vert g_{0}\Vert}\in F, h=\frac{h_{0}}{\Vert h_{0}\Vert}\in G.$ Also $\Vert g_{0}\Vert+ \Vert h_{0}\Vert =1$ so that $\Vert g_{0}\Vert g+ \Vert h_{0}\Vert h \in F\oplus_{c} G.$ Therefore, $S(V)\subseteq F\oplus_{c}G = F \oplus_c F_{S(V)}^C$ by (\#). 
\end{proof}
\begin{remark}\label{characterization of M-ideal in au space}
	Let $V$ be a complete approximate order unit space and $W$ be a closed subspace of 
	$V.$ Then $W$ is an $M$-ideal in $V$ if and only if $W$ satisfies the following 
	conditions: 
	\begin{enumerate} 
		\item $W^{\perp'+}$ is convex.
		\item $V^{\ast +} = W^{\perp+}\oplus_{1} W^{\perp'+}.$
	\end{enumerate}
\end{remark}
Now we prove the main result of this section. 
\begin{theorem}\label{characterisation of M-ideal by positive cone order smooth infinite normed space}
	Let $V$ be a complete order smooth $\infty$-normed space and $W$ be a closed subspace of $V.$ Then $W$ is an $M$-ideal in $V$ if and only if $W$ satisfies the following conditions.
	
	\begin{enumerate} 
		\item $W^{\perp'+}$ is convex.
		\item $V^{\ast +} = W^{\perp+}\oplus_{1} W^{\perp'+}.$
		%\item If $f\in W^{\perp+}$ and $g\in W^{\perp'+},$ then $||f+g||=||f||+||g||.$
		%\item $W^{\perp}$ satisfies $(OS.1.2).$
	\end{enumerate}
\end{theorem}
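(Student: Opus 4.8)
The plan is to reduce the non-unital statement to the unital case already recorded in Remark~\ref{characterization of M-ideal in au space}, by passing to the order unit space $\tilde V$ obtained by adjoining an order unit to $V$. First I would recall that $M$-ideals are a purely normwise notion: $W$ is an $M$-ideal in $V$ iff $W^{\perp}$ is an $L$-summand in $V^{*}$, i.e. iff $V^{*}=W^{\perp}\oplus_{1}W^{\perp}_{c}$ for some closed subspace $W^{\perp}_{c}$. So throughout, the game is to identify this complementary $L$-summand with $(W^{\perp'+})-(W^{\perp'+})$ and to verify the $\oplus_{1}$ splitting of the \emph{positive} cone lifts to the whole dual.

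The key steps, in order, are: (i) Since $V$ is a complete order smooth $\infty$-normed space, Theorem~\ref{duality-of-p-thry} gives that $V^{*}$ is a complete order smooth $1$-normed space satisfying $(OS.1.2)$; this is exactly the hypothesis under which Theorem~\ref{odr-cone-dec} and its corollaries apply to $V^{*}$ with the closed cone $W=W^{\perp}$. (ii) Prove the ``only if'' direction: if $W$ is an $M$-ideal, then $W^{\perp}$ is an $L$-summand in $V^{*}$, say $V^{*}=W^{\perp}\oplus_{1}N$; one then shows $N^{+}=W^{\perp'+}$ using the facial-cone machinery (Lemma~\ref{pro-of-facl-cne}, Corollary~\ref{pos-face}) together with the $L$-projection onto $W^{\perp}$, whence $W^{\perp'+}=N\cap V^{*+}$ is convex (condition (1)) and, since $V^{*+}=(W^{\perp})^{+}\oplus (N)^{+}$ with norms adding by $(OS.1.2)$ applied inside each summand, condition (2) follows. (iii) Prove the ``if'' direction: assuming (1) and (2), apply the order-theoretic cone decomposition Theorem~\ref{odr-cone-dec} to $W^{\perp}$ to decompose an arbitrary $f\in V^{*+}$ as $f=g+h$, $g\in W^{\perp+}$, $h\in W^{\perp'+}$, $\Vert f\Vert=\Vert g\Vert+\Vert h\Vert$; uniqueness plus convexity of $W^{\perp'+}$ make $f\mapsto g$ additive on $V^{*+}$, and since $V^{*+}$ is generating one extends it to a bounded linear projection $P$ on $V^{*}$ with range $W^{\perp}$, checking $\Vert Pf\Vert+\Vert f-Pf\Vert=\Vert f\Vert$ for all $f$ by splitting $f$ into positive parts via $(OS.1.2)$ and using the additivity of the norm that $\oplus_{1}$ of the cones provides. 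This exhibits $W^{\perp}$ as an $L$-summand, i.e.\ $W$ as an $M$-ideal.

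An alternative, perhaps cleaner, route is to compare directly with Proposition~\ref{alt M-ideal aou}: the conditions (1),(2) in the theorem are \emph{formally identical} to those in Proposition~\ref{alt M-ideal aou}, which characterizes when $W^{\perp}\cap S(V)$ is a split face of $S(V)$ \emph{for an approximate order unit space}. So I would embed $V$ as an $M$-ideal in $\tilde V$ (anticipating Section~4, or proving the needed piece directly: $V$ is always a closed subspace of $\tilde V$, and $V^{*}$ is complemented in $\tilde V^{*}$ by evaluation at the adjoined unit), observe $\tilde V$ is an order unit space hence an approximate order unit space, note $W$ is then a closed subspace of $\tilde V$ with $W^{\perp}$ (annihilator in $\tilde V^{*}$) related to $W^{\perp}$ (annihilator in $V^{*}$) via the direct sum $\tilde V^{*}=V^{*}\oplus\mathbb{R}$, and invoke the transitivity of $M$-ideals ($M$-ideal in an $M$-ideal is an $M$-ideal) together with Remark~\ref{characterization of M-ideal in au space} applied in $\tilde V$. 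One must check that the cones $W^{\perp'+}$ computed in $V^{*}$ and in $\tilde V^{*}$ agree, which is where Corollary~\ref{pos-face} (facial cones of positive elements stay positive) does the work.

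The main obstacle I anticipate is the passage between the positive-cone statement and the full normwise $L$-summand/$M$-ideal statement — specifically, upgrading ``$V^{*+}=W^{\perp+}\oplus_{1}W^{\perp'+}$'' to ``$V^{*}=W^{\perp}\oplus_{1}(W^{\perp'+}-W^{\perp'+})$''. This requires: that $W^{\perp'+}-W^{\perp'+}$ is a \emph{closed} subspace (not just a cone difference), that the splitting $f=g+h$ obtained on the cone extends well-definedly and linearly to differences, and crucially that the norm stays additive on all of $V^{*}$, for which one leans on $(OS.1.2)$ for $V^{*}$ to reduce any element to differences of positive ones whose norms split as required. Handling the boundary/degenerate cases ($f=0$, or $g$ or $h$ zero) and the possible non-convexity of $S(V)$ that motivated the whole reformulation will need care but should be routine given the lemmas already in hand.
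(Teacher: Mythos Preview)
Your direct Approach~1 is essentially the paper's proof: for the forward direction the paper uses that the complementary $L$-summand of $W^{\perp}$ is precisely $W^{\perp'}$ (a general Alfsen--Effros fact), whence $W^{\perp'+}=W^{\perp'}\cap V^{*+}$ is convex and Theorem~\ref{odr-cone-dec} yields condition~(2); for the converse it builds the map $L_{0}$ on $V^{*+}$ from the assumed unique decomposition~(2), uses convexity~(1) to get additivity, extends to $V^{*}$ via $(OS.1.2)$, and verifies the $L$-projection identity $\Vert L(f)\Vert+\Vert f-L(f)\Vert=\Vert f\Vert$ by splitting $f$ into positive parts exactly as you sketch. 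One detail you do not name but will need: to check $L(f)=f$ for $f\in W^{\perp}$ one must first know that $W^{\perp}$ itself satisfies $(OS.1.2)$; the paper obtains this from Proposition~\ref{hereditary1}(2),(3) after observing via Proposition~\ref{face} that $W^{\perp+}$ is a face of $V^{*+}$.

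Your alternative Approach~2, however, has a genuine gap. You propose to embed $V$ as an $M$-ideal in $\tilde V$ and then invoke Remark~\ref{characterization of M-ideal in au space} inside the order unit space $\tilde V$, using transitivity of $M$-ideals. But $V$ is \emph{not} in general an $M$-ideal in $\tilde V$: the final theorem of Section~4 shows this happens \emph{if and only if} $V$ is an approximate order unit space, equivalently (Proposition~\ref{osi-aou}) if and only if $S(V)$ is convex. For a general order smooth $\infty$-normed space $S(V)$ need not be convex --- indeed, that failure is the entire motivation for the present theorem and for replacing $S(V)$ by $Q(V)$ later in the section. So the transitivity argument cannot get started, and your parenthetical ``or proving the needed piece directly'' would amount to assuming a statement that is false in the generality claimed. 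Stick with your first approach.
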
   
We shall use the following results to prove Theorem \ref{characterisation of M-ideal by positive cone order smooth infinite normed space}. We begin with the following observation.
\begin{proposition}\label{face}
	Let $V$ be an order smooth $\infty$-normed space and  $W$ be a closed subspace of $V$ such that following conditions hold:
	\begin{enumerate}
		\item $W^{\perp'+}$ is convex;
		\item $V^{*+}= W^{\perp+}\oplus_{1}W^{\perp'+}$.
	\end{enumerate} 
	Then $W^{\perp +}$ and $W^{\perp' +}$ are faces of $V^{\ast +}$. 
\end{proposition}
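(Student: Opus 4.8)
The plan is to show that $W^{\perp+}$ is a face of $V^{\ast+}$ directly from the decomposition hypothesis, and then to derive that $W^{\perp'+}$ is a face as well, using convexity of $W^{\perp'+}$ together with the uniqueness in the decomposition $V^{\ast+}=W^{\perp+}\oplus_1 W^{\perp'+}$. First I would fix $f,g\in V^{\ast+}$ and $\lambda\in(0,1)$ with $\lambda f+(1-\lambda)g\in W^{\perp+}$, and write $f=f_1+f_2$, $g=g_1+g_2$ with $f_1,g_1\in W^{\perp+}$, $f_2,g_2\in W^{\perp'+}$ and additivity of the norm along each splitting. Then $\lambda f+(1-\lambda)g=(\lambda f_1+(1-\lambda)g_1)+(\lambda f_2+(1-\lambda)g_2)$, where the first summand lies in $W^{\perp+}$ and — here is where condition (1) enters — the second lies in $W^{\perp'+}$ because $W^{\perp'+}$ is a convex cone. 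By the uniqueness half of condition (2), matching this against $\lambda f+(1-\lambda)g\in W^{\perp+}=W^{\perp+}+\{0\}$ forces $\lambda f_2+(1-\lambda)g_2=0$. Since $V^{\ast+}$ is a proper cone and $\lambda,1-\lambda>0$, this yields $f_2=g_2=0$, i.e. $f,g\in W^{\perp+}$. This proves $W^{\perp+}$ is a face.

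For $W^{\perp'+}$ I would argue symmetrically. Take $f,g\in V^{\ast+}$ and $\lambda\in(0,1)$ with $\lambda f+(1-\lambda)g\in W^{\perp'+}$, and again decompose $f=f_1+f_2$, $g=g_1+g_2$ along $W^{\perp+}\oplus_1 W^{\perp'+}$. Then $(\lambda f_1+(1-\lambda)g_1)+(\lambda f_2+(1-\lambda)g_2)$ is a decomposition of the element $\lambda f+(1-\lambda)g$, whose canonical decomposition (by uniqueness) is $0 + (\lambda f+(1-\lambda)g)$ with $W^{\perp+}$-component zero. Hence $\lambda f_1+(1-\lambda)g_1=0$, and properness of the cone gives $f_1=g_1=0$, so $f,g\in W^{\perp'+}$. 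The one point that needs a word of care is the implicit claim that the norm-additivity built into the notation $\oplus_1$ does not obstruct applying uniqueness: uniqueness in condition (2) is uniqueness of the pair $(v,w)\in W^{\perp+}\times W^{\perp'+}$ with $u=v+w$, and the displays above exhibit two such pairs for the same $u$, so they must coincide — no appeal to the norm identity is actually needed for this step.

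The only genuine subtlety, and the step I would flag as the main obstacle, is verifying that the second summand $\lambda f_2+(1-\lambda)g_2$ genuinely lands in $W^{\perp'+}$: this is exactly where hypothesis (1), the convexity of $W^{\perp'+}$, is indispensable, since $W^{\perp'+}$ need not be a linear subspace — indeed $W^{\perp'}$ itself need not even be convex in general, as remarked in the preliminaries. Once convexity of the cone $W^{\perp'+}$ is granted, $\lambda f_2+(1-\lambda)g_2\in W^{\perp'+}$ follows because a convex cone is closed under arbitrary nonnegative linear combinations. Everything else is a routine application of properness of $V^{\ast+}$ and the uniqueness clause of the decomposition. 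I would present the two cases as a single lemma-style argument, noting that the $W^{\perp'+}$ case does not require convexity of $W^{\perp+}$ (which is automatic, being a weak$^\ast$-closed subspace intersected with the positive cone) but does again use uniqueness in (2).
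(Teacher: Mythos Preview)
Your argument is correct and matches the paper's proof essentially line for line: decompose each summand along $W^{\perp+}\oplus_1 W^{\perp'+}$, use convexity of $W^{\perp'+}$ (and of $W^{\perp+}$) to see that the convex combination decomposes accordingly, invoke uniqueness to kill the unwanted component, and then use properness of $V^{\ast+}$ to conclude that both individual components vanish. Your explicit remark that only the uniqueness of the pair $(v,w)$ is needed (not the norm identity) is a helpful clarification the paper leaves implicit.
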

\begin{proof}
	Let $f_{1}, f_{2}\in V^{*+}$ with $f=\alpha f_{1}+(1-\alpha)f_{2}\in W^{\perp+}$ for some $\alpha\in (0,1).$ By (2), we have $f_{1}= g_{1}+ h_{1},$
	$f_{2}=g_{2}+ h_{2}$ for some unique $g_{1}, g_{2} \in W^{\perp+}$ and $h_{1},h_{2}\in W^{\perp'+}$. Put $g=\alpha g_{1}+(1-\alpha)g_{2}$ and $h=\alpha h_{1}+(1-\alpha)h_{2}.$ Since $W^{\perp+}$ and $W^{\perp'+}$ are convex,
	we have $g\in W^{\perp+}$ and $h\in W^{\perp'+}.$ Then $f=g+h$ is a decomposition of $f$ in $W^{\perp+}\oplus_{1}W^{\perp'+}$. As $f\in W^{\perp+},$ by the uniqueness of decomposition, we may conclude that $h=0.$ Thus 
	$h_{1}=0=h_{2}$ so that $W^{\perp+}$ is a face of $V^{*+}.$ Now, by symmetry, $W^{\perp'+}$ is also a face of $V^{*+}.$ 
\end{proof} 
For the next result, we use the following notion defined in \cite{AE}. 
\begin{definition}
	Let $V$ be a normed linear space. For $u, v \in V$ we define $u \prec v,$ if $||v|| = ||u|| + ||v-u||$ (or equivalently, $u \in C(v)$). A subspace $W$ of $V$ is said to be \emph{ hereditary}, if $u \prec v$  with $v \in W$ implies $u \in W.$ 
\end{definition}
It follows from \cite{AE} that this relation is transitive.
\begin{proposition}\label{hereditary1}
	\begin{enumerate}
		\item Let $(V, B)$ be a complete base normed space and let $W$ be a closed 
		subspace of $V$. If $W$ is hereditary, then $W \cap B$ is a face of $B$, or 
		equivalently, $W \cap V_1^+$ is a face of $V_1^+$.
		\item Let $V$ be a complete order smooth $1$-normed space and $W$ be a closed subspace of $V$ such that $W\cap V^{+}_{1}$ be a face in $V^{+}_{1}.$ Then $W$ is hereditary.
		\item Let $W$ be a hereditary subspace of a complete order smooth $1$-normed space $V.$ If $V$ satisfies $(OS.1.2),$ then so does $W.$
	\end{enumerate}
	
\end{proposition}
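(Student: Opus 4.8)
The plan is to prove the three parts of Proposition~\ref{hereditary1} in turn, each building on the definitions of hereditary subspace, face, and the relation $\prec$.

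For part (1), I would start with the observation that in a base normed space $(V,B)$ the norm is additive on $V^+$ and that $V_1^+ = \co(B \cup \{0\})$, so $V_1^+$ is essentially a ``scaled copy'' of $B$ together with the origin; thus $W \cap B$ is a face of $B$ iff $W \cap V_1^+$ is a face of $V_1^+$, which justifies the ``equivalently'' clause. To see that $W \cap V_1^+$ is a face, take $u, v \in V_1^+$ and suppose $\lambda u + (1-\lambda) v \in W \cap V_1^+$ for some $\lambda \in (0,1)$. Setting $w = \lambda u + (1-\lambda) v$, additivity of the norm on $V^+$ gives $\Vert w \Vert = \lambda \Vert u \Vert + (1-\lambda)\Vert v \Vert$, and since each of $\lambda u$, $(1-\lambda)v$, and their sum lie in $V^+$, Lemma~\ref{pro-of-facl-cne} shows $\lambda u \in C(w)$, i.e.\ $\lambda u \prec w$; as $\prec$ is transitive and $u \prec \lambda^{-1}(\lambda u)$ trivially (again by norm additivity on $V^+$), we get $u \prec w \in W$, hence $u \in W$ by heredity, and after rescaling $u \in W \cap V_1^+$; symmetrically $v \in W \cap V_1^+$. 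So $W \cap V_1^+$ is a face.

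For part (2), suppose $u \prec v$ with $v \in W$, and I must show $u \in W$. Write $v = v_+ - v_-$ with $v_\pm \in V^+$ and $\Vert v \Vert = \Vert v_+ \Vert + \Vert v_- \Vert$, using $(OS.1.2)$; similarly decompose $u = u_+ - u_-$ and $v - u = z_+ - z_-$. Since $u \prec v$ means $\Vert v \Vert = \Vert u \Vert + \Vert v - u \Vert$, I can assemble a decomposition $v = (u_+ + z_+) - (u_- + z_-)$ and, comparing norms, conclude (via Lemma~\ref{pro-of-facl-cne}) that $u_+, u_-, z_+, z_- \in C(v_+ + v_-)$ — more precisely that $u_+ \prec v_+$ and $u_- \prec v_-$ after matching positive and negative parts. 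Normalizing $v_+ / \Vert v_+ \Vert \in V_1^+ \subseteq$ the face, and using that $W \cap V_1^+$ is a face, I get $u_+ \in W$ and $u_- \in W$, hence $u = u_+ - u_- \in W$. The bookkeeping of matching positive/negative parts and verifying the norm identities is the main technical obstacle here: one has to be careful that the decomposition of $v-u$ really does combine with that of $u$ to give an \emph{optimal} decomposition of $v$, which is where the $\prec$ hypothesis is consumed.

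For part (3), given that $W$ is hereditary in $V$ and $V$ satisfies $(OS.1.2)$, I must show $W$ satisfies $(OS.1.2)$: every $u \in W$ has a decomposition $u = u_1 - u_2$ with $u_1, u_2 \in W^+$ and $\Vert u \Vert = \Vert u_1 \Vert + \Vert u_2 \Vert$. Apply $(OS.1.2)$ in $V$ to get $u = u_1 - u_2$ with $u_i \in V^+$ and $\Vert u \Vert = \Vert u_1 \Vert + \Vert u_2 \Vert$. Then $u_1 \prec u_1$ trivially, but more usefully $u_1, -u_2 \in C(u)$ by Lemma~\ref{pro-of-facl-cne}, i.e.\ $u_1 \prec u$ and $-u_2 \prec u$ (using that $\Vert u \Vert = \Vert u_1 \Vert + \Vert -(-u_2) \Vert$ type identities together with transitivity); since $u \in W$ and $W$ is hereditary, $u_1 \in W$ and $u_2 \in W$, so $u_1, u_2 \in W^+$ and the decomposition witnesses $(OS.1.2)$ for $W$. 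I expect part (2) to be the crux: parts (1) and (3) follow fairly directly from the heredity definition plus Lemma~\ref{pro-of-facl-cne}, whereas part (2) requires genuinely building an optimal signed decomposition of $v$ out of those of $u$ and $v-u$ and then invoking the face property in the right normalized form.
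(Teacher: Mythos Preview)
Your arguments for parts (1) and (3) are essentially the paper's: in (1) you use additivity of the norm on $V^+$ to get $\lambda u \prec w$ and then invoke heredity, and in (3) you use Lemma~\ref{pro-of-facl-cne} to see $u_1,-u_2\in C(u)$ and then heredity again. These are correct and need no change.

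Part (2), however, has a real gap. From $u\prec v$ you assemble the optimal decomposition $v=(u_++z_+)-(u_-+z_-)$, which is fine, but you then assert that ``after matching positive and negative parts'' one gets $u_+\prec v_+$ and $u_-\prec v_-$, where $v=v_+-v_-$ is a \emph{separate} $(OS.1.2)$ decomposition of $v$. Nothing forces these two optimal decompositions to agree, and there is no mechanism producing $u_+\prec v_+$. Your next step compounds the problem: you place $v_+/\Vert v_+\Vert$ in $W\cap V_1^+$, but the $(OS.1.2)$ decomposition of $v\in W$ only yields $v_\pm\in V^+$, not $v_\pm\in W^+$. Getting the positive parts of elements of $W$ back into $W$ is exactly the heredity you are trying to establish, so as written the argument is circular. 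The difficulty you flag (``the bookkeeping of matching positive/negative parts'') is not bookkeeping; it is the whole problem.

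The paper closes this gap by a different device. Having obtained $u_1\in V^+$ with $u_1\prec v$, it applies the Alfsen--Effros cone decomposition \cite[Theorem~2.9]{AE} to the cone $W^+$ (not to $V^+$), in the strong form that allows one to choose the decomposition $v=v_1+v_2$ with $v_1\in W^+$, $v_2\in (W^+)'$, $\Vert v\Vert=\Vert v_1\Vert+\Vert v_2\Vert$, \emph{and} $u_1\prec v_1$. This last clause is the missing ingredient in your sketch. With it in hand, Corollary~\ref{pos-face} gives $v_1-u_1\in C(v_1)\subset V^+$, so
\[
\frac{v_1}{\Vert v_1\Vert}=\frac{\Vert u_1\Vert}{\Vert v_1\Vert}\cdot\frac{u_1}{\Vert u_1\Vert}+\frac{\Vert v_1-u_1\Vert}{\Vert v_1\Vert}\cdot\frac{v_1-u_1}{\Vert v_1-u_1\Vert}
\]
is a genuine convex combination in $V_1^+$ whose left side lies in $W\cap V_1^+$ (because $v_1\in W^+$ by construction). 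Now the face hypothesis yields $u_1\in W$, and the same argument applied to $u_2$ finishes the proof. The essential idea you are missing is to decompose $v$ relative to $W^+$ rather than relative to $V^+$, so that the ``anchor'' element $v_1$ is already known to lie in $W$.
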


\begin{proof}
	(1): Let $u_{1}, u_{2}\in B$ such that $u=\alpha u_{1}+(1-\alpha){u_{2}}\in W \cap B$ 
	for some $\alpha\in (0,1).$ Then $\alpha u_{1}, (1 - \alpha ) u_2 \leq u.$ Since $V$ is a 
	base normed space, we get $\Vert u \Vert= \Vert \alpha u_1 \Vert + \Vert  (1-\alpha) u_{2} 
	\Vert$. Thus $\alpha u_{1} \prec u, (1-\alpha)u_{2} \prec u.$ Since $W$ is hereditary, we 
	have $\alpha u_{1}, (1-\alpha)u_{2}\in W.$ Thus $u_{1}, u_{2}\in W$ so that $u_{1}, 
	u_{2}\in W \cap B.$ Hence $W \cap B$ is a face of $B.$ 
	
	(2): Let $v\in W$ and $u\in C(v).$ We shall prove that $u\in W.$ By $(OS.1.2)$ property of $V,$ we have $u=u_1-u_2$ with $||u||=||u_1||+||u_2||,$ for some $u_1,u_2\in V^+.$ 
	Thus $u_1\prec u\prec v.$ Since $W^+$ is a cone in $V,$ by \cite[Theorem 2.9]{AE}, we have $v=v_1+v_2,$ with 
	$||v||=||v_1||+||v_2||$ and $u_1\prec v_1$ where $v_1\in W^+$ and $v_2\in W^{+'}.$ 
	Since $u_{1}\prec v_{1},$ we also have $v_{1} - u_{1} \prec v_{1}.$ Then $v_{1}-u_{1}\in 
	C(v_{1})$. By Corollary \ref{pos-face}, we have $C(v_{1})\subseteq V^{+}$ so that 
	$v_{1}-u_{1}\in V^{+}$. Further, as the norm is additive on $V^+,$ the right hand side of the expression 
	$$\frac{v_1}{||v_1||}=\frac{||u_1||}{||v_1||}\left( \frac{u_1}{||u_1||} \right) + \frac{||v_1-u_1||}{||v_1||} \left( \frac{v_1-u_1}{||v_1-u_1||} \right)$$
	is a convex combination of $\frac{u_1}{||u_1||},$ $\frac{v_1-v_1}{||v_1-u_1||}$ in $V_1^+.$  
	Since $W \cap V_1^+$ is a face of $V_1^+$ and since $\frac{u_1}{||u_1||} \in W \cap V_1^+,$ we have 
	$u_1\in W^+.$ By a similar argument, we can show that $u_2\in W^+.$ Hence $u \in W.$ 
	
	(3): Let $w \in W,$ then by $(OS.1.2)$ property of $V,$ there are $u, v \in V^+$ such that $w = u - v$ and $||w||=||u||+||v||.$ Therefore $u, -v \prec w,$ so by definition of hereditary subspaces, $u, -v\in W.$  Thus $u, v \in W^+$ so that $W$ also satisfies $(OS.1.2).$ 
\end{proof} 

\begin{proof}[{\bf Proof of Theorem \ref{characterisation of M-ideal by positive cone order smooth infinite normed space}}] \hfill 
	
	Let $W$ be an $M$-ideal in $V.$ Then $W^{\perp}$ is an $L$-summand of $V^{\ast}$ so that $W^{\perp'}$ is also an $L$-summand of 
	$V^{\ast}$ with $V^{\ast} = W^{\perp} \oplus_1 W^{\perp'}.$ Thus $W^{\perp' +} = W^{\perp'} \cap V^{\ast +}$ is convex. Also, 
	by the order cone decomposition Theorem \ref{odr-cone-dec}, condition (2) holds. 
	
	Conversely, assume that conditions (1) and (2) hold. Let $f \in V^{*+}.$ Then by condition $(2),$ there exist unique $g\in W^{\perp +},$ 
	and $h\in W^{\perp'+}$ such that $f=g+h$ with $||f||=||g||+||h||.$ Let us write $L_0(f)=g.$ Then by the uniqueness of decomposition 
	$L_{0}:V^+\mapsto V^+$ is well defined and $L_{0}(\alpha f)=\alpha L_0(f)$ for all $\alpha \geq 0.$ Now, let $f_1, f_2\in V^+.$ Again applying  
	condition (2), we can find unique $g_1, g_2 \in W^{\perp +}$ and $h_1, h_2 \in W^{\perp ' +}$ such that $f_{i}=g_{i}+h_{i}$ and 
	$||f_i||=||g_{i}||+||h_{i}||$ for $i = 1, 2.$ Then $f_1+f_2=(g_1+g_2)+(h_1+h_2),$ where $g_1+g_2\in W^{\perp+}$ and 
	$h_1+h_2\in W^{\perp'+}$ by the condition (1). Thus by the condition (2), we have $||f_1+f_2||=||(g_1+g_2)||+||(h_1+h_2)||$ so that 
	$L_{0}(f_1+f_2)=L_{0}(f_1)+L_{0}(f_2).$ Now, let $f \in V^{\ast}.$ By the condition $(OS.1.2)$ in $V^{\ast},$ there are $f_1, f_2 \in V^{\ast +}$ such that 
	$f = f_1 - f_2$ with $||f|| = ||f_1|| + ||f_2||.$ Let us write $L(f)=L_{0}(f_1)-L_{0}(f_2).$ As $L_0$ is additive on $V^{\ast +},$ it is routine to check that 
	$L: V^{\ast} \to V^{\ast}$ is  a well defined, positive linear mapping with $L(V^{\ast}) \subset W^{\perp}.$ We prove that $L$ is an \emph{L-projection} onto $W^{\perp}.$ 
	Let $f\in V^{*},$ then by $(OS.1.2)$ in $V^{\ast},$ there are $g,h\in V^{*+}$ such that $f=g-h$ with $||f||=||g||+|||h||.$ Now  
	\begin{eqnarray*} 
		||f|| &\le& ||L(f)|| + ||f-L(f)|| \\
		&=& ||L(g)-L(h)|| + ||g-h-L(g)+L(h)||\\
		&\leq& ||L(g)||+||L(h)||+||g - L(g)||+||h - L(h)||\\
		&=& (||L_{0}(g)||+||g-L_{0}(g)||)+(||L_{0}(h)||+||h -L_{0}(h)||)\\
		&=& ||g||+||h||=||f||
	\end{eqnarray*}
	so that $||L(f)||+||f-L(f)||=||f||$ for all $f\in V^{*}.$ Next, we show that $L(f)=f$ for all 
	$f\in W^{\perp}.$ To see this, let $f\in W^{\perp}.$ Since by Proposition \ref{hereditary1}, 
	$W^{\perp}$ satisfies $(OS.1.2)$, there are $f_1, f_2\in W^{\perp+}$ such that 
	$f=f_1-f_2$ with $||f||=||f_1||+||f_2||.$ Now, by Theorem \ref{odr-cone-dec}, 
	$f_{i}=g_{i}+h_{i},$ where $g_{i}\in W^{\perp+}$ and $h_{i}\in W^{\perp'+}$ for $i= 1,2.$ 
	As $f_{i},g_{i}\in W^{\perp +},$ we have $h_i \in W^{\perp} \cap W^{\perp'+} = \{0\}.$ Thus 
	by the construction, $g_1=L_{0}(f_{1})$ and $g_{2}= L_{0}(f_{2})$ so that 
	$L(f)=L_{0}(f_{1})-L_{0}(f_2)=g_1-g_2=f$ if $f \in W^{\perp}.$ Thus 
	for any $f\in V^{*},$ we have $L^{2}(f)=L(L(f))=L(f).$ Hence $L$ is an $L$-projection of 
	$V^{*}$ onto $W^{\perp}$ and therefore $W$ is an $M$-ideal in $V.$ 
\end{proof} 

If we attempt to get a prototype of Theorem \ref{M-ideal and splits face in A(K)} for order 
smooth $\infty$-normed spaces, we need to replace the ``state spaces'' by the 
``quasi-state spaces'' as the quasi-state space of an order smooth $\infty$-normed space 
is always convex.  Accordingly, we need to `adjust' the definition of split faces as well. 

First, let us note that  if $V$ is an approximate order unit space, then there is a bijective 
correspondence between the class of faces of $S(V)$ and the class of non zero faces of
$Q(V)$ containing zero. And these correspondences is given by 
$$F \subseteq S(V) \mapsto \co(F\cup \{ 0 \})\subseteq Q(V)$$ 
and 
$$G \subseteq Q(V) \mapsto G \cap S(V) \subseteq S(V).$$
\begin{lemma} 
	Let $V$ be an approximate order unit space and let  $F$ be a face of $S(V).$ 
	Then $S(V)\cap (\cone(\co(F\cup \{0\})))'= F^{C}_{S(V)}$.
\end{lemma}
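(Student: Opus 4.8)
The plan is to unwind both sides of the claimed identity using the bijective correspondence between faces of $S(V)$ and faces of $Q(V)$ containing $0$, together with the positivity facts from Corollary \ref{pos-face}. Write $\tilde F = \co(F \cup \{0\})$, so $\cone(\tilde F) = \bigcup_{\lambda \geq 0}\lambda\tilde F$ is the facial cone generated by $F$ inside $V^{\ast+}$. The strategy is to show the two set inclusions separately, in each case translating membership in the ``complementary cone'' $(\cone\tilde F)'$ into a statement about $C(f) \cap \cone\tilde F = \{0\}$ and then relating the facial cone $C(f)$ of a state $f$ to $\face_{S(V)}(f)$ via Corollary \ref{pos-face}, which gives $C(f) \subseteq V^{\ast +}$ and $\face_{V^{\ast}_1}(f) = \face_{V^{\ast+}_1}(f)$ whenever $f \in V^{\ast +}$ (note $V^{\ast}$ is a complete order smooth $1$-normed space satisfying $(OS.1.2)$ by Theorem \ref{duality-of-p-thry}, since $V$ is an approximate order unit space, hence order smooth $\infty$-normed).

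First I would take $f \in S(V) \cap (\cone\tilde F)'$. By definition of the complementary cone this means $C(f) \cap \cone(\tilde F) = \{0\}$. I want to conclude $\face_{S(V)}(f) \cap F = \emptyset$, which (by the definition of $F^C_{S(V)}$ as the union of $\face_{S(V)}(v)$ over those $v$ whose face misses $F$) gives $f \in F^C_{S(V)}$. If some $g \in \face_{S(V)}(f) \cap F$, then $g \in F \subseteq \tilde F \subseteq \cone\tilde F$, and also $g \in \face_{S(V)}(f)$ forces, via the characterization of faces of the unit ball recalled in the preliminaries, that $f = \lambda g + (1-\lambda)u$ for some $u \in V^{\ast+}_1$, $\lambda \in (0,1)$; hence $g \in \face_{V^{\ast}_1}(f)$, so $g \in C(f)$. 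Thus $g \in C(f) \cap \cone\tilde F$, and since $g \neq 0$ (states have norm $1$) this contradicts $C(f) \cap \cone\tilde F = \{0\}$. Therefore $\face_{S(V)}(f) \cap F = \emptyset$ and $f \in F^C_{S(V)}$.

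Conversely I would take $f \in F^C_{S(V)}$, so $f \in S(V)$ and $\face_{S(V)}(f) \cap F = \emptyset$, and aim to show $C(f) \cap \cone\tilde F = \{0\}$. Suppose $0 \neq h \in C(f) \cap \cone\tilde F$. From $h \in \cone\tilde F = \bigcup_{\lambda\geq 0}\lambda\co(F\cup\{0\})$ we can write $h = \mu g$ with $\mu > 0$ and $g \in \co(F \cup \{0\})$, $g \neq 0$; scaling $g$ to $g_0 = g/\Vert g\Vert \in S(V)$, and using that $F$ is a face of $S(V)$ (so that $\co(F\cup\{0\}) \cap S(V)$ recovers $F$ — this is the bijective correspondence), we get $g_0 \in F$. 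On the other hand $h \in C(f) = \cone(\face_{V^{\ast}_1}(f))$, so $g_0$ lies in $\face_{V^{\ast}_1}(f)$ (both are norm-one positive elements, and $\face_{V^\ast_1}(f)=\face_{V^{\ast+}_1}(f)$ by Corollary \ref{pos-face}); passing to the state space this yields $g_0 \in \face_{S(V)}(f)$. Hence $g_0 \in \face_{S(V)}(f) \cap F$, contradicting $f \in F^C_{S(V)}$. So $C(f)\cap\cone\tilde F = \{0\}$, i.e. $f \in (\cone\tilde F)'$, and since $f \in S(V)$ we are done.

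The main obstacle I expect is the careful bookkeeping between the three ``levels'' — the unit-ball face $\face_{V^{\ast}_1}$, the positive-cone face $\face_{V^{\ast+}_1}$, and the state-space face $\face_{S(V)}$ — and in particular verifying that membership in $\face_{S(V)}(f)$ is genuinely equivalent to membership in the facial cone $C(f)$ for \emph{positive} norm-one functionals. This equivalence is exactly what Corollary \ref{pos-face} is designed to supply (it identifies $\face_{V_1^\ast}(f)$ with $\face_{V^{\ast+}_1}(f)$ and puts $C(f)$ inside $V^{\ast+}$), but one must be attentive to the normalizations and to the fact that $\co(F\cup\{0\})$ is used rather than $F$ itself, so that $\cone$ of it is the honest facial cone; the convexity of $Q(V)$ (equivalently convexity of $S(V)$ for an approximate order unit space, Proposition \ref{osi-aou}) is what makes $\co(F\cup\{0\})$ behave well here. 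Once the level-changing dictionary is pinned down, both inclusions are short.
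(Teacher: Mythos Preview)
Your proof is correct and follows essentially the same route as the paper's: both directions reduce, for $f\in S(V)$, to the equivalence between $C(f)\cap\cone(\co(F\cup\{0\}))=\{0\}$ and $\face_{S(V)}(f)\cap F=\emptyset$, using that $S(V)$ is a face of $V^{*+}_1$ (so $\face_{S(V)}(f)=\face_{V^{*}_1}(f)$) together with Corollary~\ref{pos-face}. The paper compresses this into the equivalence chain $C(f)\cap\co(F\cup\{0\})=\{0\}\Leftrightarrow \co(\face_{S(V)}(f)\cup\{0\})\cap\co(F\cup\{0\})=\{0\}$ and then a short norm argument, while you carry out the element-chasing explicitly, but the content is the same.
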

\begin{proof}
	Let $f\in S(V)$. Then
	\begin{align*}
	(\ddagger) \qquad f\in (\cone(\co(F\cup\{0\})))'&\Leftrightarrow C(f)\cap \co(F\cup\{0\})=\{0\}\\
	&\Leftrightarrow \co(\face_{S(V)}(f)\cup \{0\})\cap \co(F\cup \{0\})=\{0\}.\\
	\end{align*}
	Now as, $\face_{S(V)}(f)\subseteq S(V)$ and $F\subseteq S(V)$, we get that $\face_{S(V)}(f)\cap F=\emptyset$. Thus $f\in F^{C}_{S(V)}$, that is, $S(V)\cap (\cone(\co(F\cup\{0\})))'\subseteq F^{C}_{S(V)}$. 
	
	Conversely, let $f \in F_{S(V)}^C$. Then $f \in S(V)$ and $\face_{S(V)}(f)\cap F=\emptyset$. By ($\ddagger$), it suffices to show that $\co(\face_{S(V)}(f)\cup \{0\})\cap \co(F\cup \{0\})=\{0\}$. Let $g\in \co(\face_{S(V)}(f)\cup \{0\})\cap \co(F\cup \{0\})$. Then there exist $g_1 \in \face_{S(V)}(f)$, $g_2 \in F$ and 
	$\lambda, \mu\in [0,1]$ such that $g=\lambda g_{1}=\mu g_{2}$. As $g_{1},g_{2}\in S(V)$, we get $\lambda=\mu$. Now, if $\lambda\neq 0$, then $g_{1} = g_{2}\in \face_{S(V)}(f) \cap F = \emptyset$
	so that $\lambda=0=\mu$ and consequently, $g=0$. Thus $S(V)\cap (\cone(\co(F\cup \{0\})))'= F^{C}_{S(V)}$.
\end{proof}
\begin{definition}Let $V$ be an order smooth $\infty$-normed space.  Let $G$ and $H$ be any two faces of $Q(V)$ containing zero such that $G\cap H=\{0\}$. We define
	$$G\oplus_{c,1}H=\{\lambda g+(1-\lambda)h: g \in G, h \in H, \Vert g\Vert = \Vert h\Vert, \lambda \in [0,1]\}.$$ 
	For a face $G$ of $Q(V)$ containing zero, we shall say that $G$ is a \emph{split face} of $Q(V)$, if $G_{Q(V)}' := (\cone(G))' \cap Q(V)$ is also a face of $Q(V)$ (containing zero) and if every element in $Q(V)$ has a unique representation in $G\oplus_{c,1} G_{Q(V)}'$. 
\end{definition}
\begin{remark} 
	By the definition of a split face, we get that $\Vert f \Vert \le \Vert g \Vert = \Vert h 
	\Vert$. But we can show that these norms are equal. To see this, let $f\in Q(V) \setminus 
	\{0\},$ Then $f_{1}= \Vert f\Vert^{-1}f\in Q(V)$. Thus there exist unique $g_{1}\in 
	W^{\perp}\cap Q(V), h_{1}\in (W^{\perp}\cap Q(V))'_{Q(V)}$ with $\Vert g_1 \Vert = \Vert 
	h_1 \Vert$ such that $f_1 = \lambda g_1 + (1-\lambda) h_1$ for some $\lambda 
	\in [0,1].$  Now 
	$$1=\Vert f_1 \Vert \leq \lambda \Vert g_{1} \Vert +(1-\lambda) \Vert h_{1} \Vert \leq 1.$$
	Thus $f=\lambda g+ (1-\lambda)h$, where $g= \Vert f\Vert g_{1} \in W^{\perp}\cap Q(V)$ 
	and $h= \Vert f\Vert h_{1} \in (W^{\perp}\cap Q(V))'_{Q(V)}$ with $\Vert g\Vert= \Vert 
	f \Vert= \Vert h\Vert$.
\end{remark}

We show that this notion is a extension of a split face of $S(V)$ as follows:
\begin{theorem} Let $V$ be an approximate order unit space and $F$ be a face of $V.$ Then   
	$F$ is a split face of $S(V)$ if and only if $\co(F\cup \{0\})$ is a split face of $Q(V)$ that is $Q(V)= \co(F\cup\{0\})\oplus_{c,1} \co(F\cup\{0\})_{Q(V)}'$.
\end{theorem}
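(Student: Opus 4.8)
The plan is to establish a dictionary between the split-face structure on $S(V)$ and the one on $Q(V)$, using the bijective correspondence $F \mapsto \co(F \cup \{0\})$ between faces of $S(V)$ and nonzero faces of $Q(V)$ containing zero that was recorded just above the lemma. Write $\hat F = \co(F \cup \{0\})$. The first thing I would verify is that this correspondence carries complementary faces to complementary sets: by the Lemma immediately preceding, $S(V) \cap (\cone(\hat F))' = F_{S(V)}^C$, and since $F_{S(V)}^C$ is itself a face of $S(V)$ when $F$ is split, the set $(\cone(\hat F))' \cap Q(V) =: \hat F_{Q(V)}'$ corresponds under the same dictionary to $\co(F_{S(V)}^C \cup \{0\})$ — that is, $\widehat{F_{S(V)}^C} = \hat F_{Q(V)}'$. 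This identity is the technical heart of the equivalence and I expect checking it carefully (that the ``$\cone$, intersect with $Q(V)$, then intersect with $S(V)$'' operations all commute appropriately with $\co(\,\cdot\, \cup \{0\})$) to be the main obstacle; everything else is bookkeeping around scalars.

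Granting that, the forward direction runs as follows. Suppose $F$ is a split face of $S(V)$. Then $F_{S(V)}^C$ is a face of $S(V)$, hence $\hat F_{Q(V)}' = \widehat{F_{S(V)}^C}$ is a face of $Q(V)$ containing zero, and $\hat F \cap \hat F_{Q(V)}' = \widehat{F \cap F_{S(V)}^C} = \widehat{\emptyset} = \{0\}$, so $G \oplus_{c,1} H$ with $G = \hat F$, $H = \hat F_{Q(V)}'$ is defined. For existence of a representation: given $f \in Q(V) \setminus \{0\}$, scale to $f_1 = \Vert f \Vert^{-1} f \in S(V)$, apply $S(V) = F \oplus_c F_{S(V)}^C$ to write $f_1 = \lambda u + (1-\lambda) w$ with $u \in F$, $w \in F_{S(V)}^C$, $\lambda \in [0,1]$, then $f = \lambda \Vert f \Vert u + (1-\lambda) \Vert f \Vert w$ exhibits $f$ in $\hat F \oplus_{c,1} \hat F_{Q(V)}'$ with the two pieces having equal norm $\Vert f \Vert$ (as in the Remark). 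For uniqueness: if $f = \lambda g + (1-\lambda) h = \mu g' + (1-\mu) h'$ are two such representations with $\Vert g \Vert = \Vert h \Vert$, $\Vert g' \Vert = \Vert h' \Vert$, then because $g, h \in Q(V)$ have the common norm $\Vert f \Vert$ — here I invoke the Remark to pin the norms down — after normalising by $\Vert f \Vert$ both become convex decompositions of $f_1 \in S(V)$ along $F \oplus_c F_{S(V)}^C$, and uniqueness there (splitness of $F$) forces $\lambda = \mu$ and the normalised pieces to agree, whence $g = g'$, $h = h'$. The case $f = 0$ is trivial. Thus $\hat F$ is a split face of $Q(V)$.

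The converse is the same argument read backwards. Suppose $\hat F = \co(F \cup \{0\})$ is a split face of $Q(V)$, so $\hat F_{Q(V)}'$ is a face of $Q(V)$ containing zero; intersecting with $S(V)$ and using the preceding Lemma, $\hat F_{Q(V)}' \cap S(V) = F_{S(V)}^C$, which is therefore a face of $S(V)$ (the intersection of a face of $Q(V)$ with $S(V)$ is a face of $S(V)$, by the stated correspondence), and $F \cap F_{S(V)}^C \subseteq \hat F \cap \hat F_{Q(V)}' \cap S(V) = \{0\} \cap S(V) = \emptyset$. For $S(V) = F \oplus_c F_{S(V)}^C$: given $f \in S(V) \subseteq Q(V)$, the hypothesis gives a unique $f = \lambda g + (1-\lambda) h$ with $g \in \hat F$, $h \in \hat F_{Q(V)}'$, $\Vert g \Vert = \Vert h \Vert$, $\lambda \in [0,1]$; since $\Vert f \Vert = 1$ the Remark gives $\Vert g \Vert = \Vert h \Vert = 1$, so $g \in \hat F \cap S(V) = F$ and $h \in \hat F_{Q(V)}' \cap S(V) = F_{S(V)}^C$, and $f = \lambda g + (1-\lambda) h$ is the desired convex decomposition. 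Uniqueness of this decomposition transfers directly from the uniqueness clause in the definition of the split face $\hat F$ (any convex decomposition of $f$ in $F \oplus_c F_{S(V)}^C$ is, after noting all the relevant norms equal $1$, a representation in $\hat F \oplus_{c,1} \hat F_{Q(V)}'$). Hence $F$ is a split face of $S(V)$, completing the equivalence.
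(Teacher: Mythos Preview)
Your proof is correct and follows essentially the same approach as the paper's: both hinge on the identification $\co(F \cup \{0\})_{Q(V)}' = \co(F_{S(V)}^C \cup \{0\})$ (which the paper simply asserts), and then transfer existence and uniqueness of decompositions between $S(V)$ and $Q(V)$ by rescaling by $\Vert f\Vert$, using the Remark to force the norms of the two pieces to equal $\Vert f\Vert$. Your flagging of that identification as the point requiring care is apt, and your treatment of uniqueness in both directions is slightly more explicit than the paper's, but the argument is otherwise the same.
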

\begin{proof} Let $F$ be split face of $S(V).$ 
	Since $\co(F\cup \{0\})_{Q(V)}'=\co(F^{C}_{S(V)}\cup \{0\})$ and since $F^{C}_{S(V)}$ is a face of $S(V),$ we conclude that $\co(F\cup\{0\})_{Q(V)}'$ is a face of $Q(V).$ Let $f\in Q(V)\setminus \{0\}.$ Then $\Vert f\Vert^{-1}f\in S(V).$
	Since $F$ is a split face of $S(V).$ There exist an unique element $g_{0}\in F, h_{0}\in F^{C}_{S(V)}$ such that $\Vert f\Vert^{-1}f=\lambda g_{0}+ (1-\lambda)h_{0}$ for some $\lambda\in [0,1].$
	Put $g= \Vert f\Vert g_{0}, h=\Vert f\Vert h_{0}$. Then $f=\lambda g + (1-\lambda) h$ where $\Vert g\Vert= \Vert h\Vert=\Vert f\Vert$ and $g\in \co(F\cup \{0\})$ and $h\in \co(F^{C}_{S(V)}\cup\{0\})=\co(F\cup\{0\})_{Q(V)}'.$ To prove uniqueness,
	let $g_{1}\in \co(F^{C}_{S(V)}\cup \{0\})$  and $h_{1}\in \co(F\cup \{0\})_{Q(V)}'$ such that $\Vert g_{1}\Vert= \Vert h_{1}\Vert= \Vert f \Vert$ and $f=\mu g_{1}+(1-\mu)h_{1}$ for some $\mu \in [0,1].$ Then
	we have $\Vert f\Vert^{-1}f= \mu \Vert f\Vert^{-1}  g_{1}+(1-\mu)\Vert f\Vert^{-1} \Vert h_{1}\Vert.$ Now by uniqueness of split face we have $\Vert f\Vert^{-1} g_{1}= g_{0}=\Vert f\Vert^{-1} g.$ Thus we have $g_{1}=g$ and similarly,
	we have $h_{1}=h$. Hence $\co(F\cup \{0\})$ is a split face of $Q(V).$
	
	Conversely, let $\co(F\cup \{0\})$ is a split face of $Q(V).$ We have to show that $F$ is a split face of $S(V).$ Since $\co(F\cup\{0\})$ is a split face of $Q(V),$ $\co(F\cup \{0\})_{Q(V)}'$ is also a face of $Q(V).$ But $\co(F \cup \{0\})_{Q(V)}' =\co(F_{S(V)}^{C} \cup \{0\}).$ Thus $F^{C}_{S(V)}$ is a face of $S(V)$ as well. Let $f\in S(V)\subseteq Q(V)=\co(F\cup \{0\})\oplus_{c,1} \co(F\cup \{0\})_{Q(V)}'.$ Then there exist unique pair $g\in \co(F\cup \{0\}), h\in \co(F^{C}_{S(V)}\cup \{0\})$ with $\Vert g\Vert=\Vert h\Vert=\Vert f\Vert=1$ (so that $g,h\in S(V)$) such that $f=\lambda g+ (1-\lambda)h$ for some $\lambda \in [0,1].$ Thus $g\in \co(F\cup \{0\})\cap S(V)= F$ and $h\in \co(F^{C}_{S(V)}\cup \{ 0 \})\cap S(V)=F^{C}_{S(V)}.$ Thus $f\in F\oplus_{c}F^{C}_{S(V)}.$
\end{proof}
\begin{lemma}\label{dual face}
	Let $V$ be a complete order smooth $\infty$-normed space and let $W$ be a 
	closed subspace of $V.$ Then $W^{\perp+'}\cap Q(V)= W^{\perp'}\cap Q(V)$.
\end{lemma}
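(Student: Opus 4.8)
The plan is to prove the two inclusions of $W^{\perp+'}\cap Q(V)=W^{\perp'}\cap Q(V)$ separately; one direction is purely formal, and the other reduces to the positivity of the facial cone $C(f)$ for positive $f\in V^{\ast}$, which is precisely Corollary \ref{pos-face} transported to the dual space.

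First I would record the only structural input needed: $V^{\ast}$ is a \emph{complete} order smooth $1$-normed space satisfying $(OS.1.2)$. Indeed $V^{\ast}$ is a dual Banach space, hence complete, and since $V$ is order smooth $\infty$-normed, Theorem \ref{duality-of-p-thry} shows $V^{\ast}$ is order smooth $1$-normed and satisfies $(OS.1.2)$. Consequently Corollary \ref{pos-face} applies to $V^{\ast}$, so $C(f)\subseteq V^{\ast+}$ for every $f\in V^{\ast+}$, and in particular for every $f\in Q(V)$ (recall $Q(V)\subseteq V^{\ast+}$).

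The inclusion $W^{\perp'}\cap Q(V)\subseteq W^{\perp+'}\cap Q(V)$ is immediate and uses nothing special: from $W^{\perp+}=W^{\perp}\cap V^{\ast+}\subseteq W^{\perp}$ we get that, for any $v\in V^{\ast}$, the condition $W^{\perp}\cap C(v)=\{0\}$ forces $W^{\perp+}\cap C(v)=\{0\}$; that is, $(W^{\perp})'\subseteq (W^{\perp+})'$. For the reverse inclusion, let $f\in W^{\perp+'}\cap Q(V)$, so $f\in V^{\ast+}$ and $C(f)\cap W^{\perp+}=\{0\}$. I claim $C(f)\cap W^{\perp}=\{0\}$: if $g\in C(f)\cap W^{\perp}$, then by the first step $g\in C(f)\subseteq V^{\ast+}$, so $g\in W^{\perp}\cap V^{\ast+}=W^{\perp+}$, whence $g\in C(f)\cap W^{\perp+}=\{0\}$. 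Thus $C(f)\cap W^{\perp}=\{0\}$, i.e. $f\in(W^{\perp})'=W^{\perp'}$, and since $f\in Q(V)$ we conclude $f\in W^{\perp'}\cap Q(V)$.

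There is essentially no obstacle here; the only point deserving care is checking that the three hypotheses of Corollary \ref{pos-face} (completeness, the order smooth $1$-norm, and $(OS.1.2)$) all pass to the dual $V^{\ast}$. Once that is in place, the equality is forced: the positivity $C(f)\subseteq V^{\ast+}$ makes membership of an element of $C(f)$ in the subspace $W^{\perp}$ equivalent to membership in the cone $W^{\perp+}$, which is exactly what distinguishes $(W^{\perp})'$ from $(W^{\perp+})'$ after intersecting with $Q(V)$.
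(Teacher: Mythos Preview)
Your proof is correct and follows essentially the same approach as the paper: both arguments reduce the equality to the observation that $C(f)\subseteq V^{\ast+}$ for $f\in Q(V)$ (via Corollary~\ref{pos-face} applied to $V^{\ast}$), which makes $C(f)\cap W^{\perp}=C(f)\cap W^{\perp+}$. The paper states this identity directly while you split it into two inclusions, and you are a bit more explicit about verifying the hypotheses of Corollary~\ref{pos-face} on the dual, but the content is the same.
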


\begin{proof}
	Since $\cone(W^{\perp} \cap Q(V))= W^{\perp+}$, we have 
	$$(W^{\perp}\cap Q(V))'_{Q(V)}= (\cone(W^{\perp}\cap Q(V)))'\cap Q(V)= W^{\perp+'} \cap Q(V).$$
	We show that $W^{\perp+'}\cap Q(V)= W^{\perp'}\cap Q(V)$. If $f\in Q(V)$, then
	$C(f)\subseteq V^{\ast +}$, by Corollary \ref{pos-face}. Thus $C(f)\cap W^{\perp+}=C(f)\cap W^{\perp}$. Since $W^{\perp'} = \{ f \in V^{\ast}: C(f) \cap W = \{ 0 \} \}$, it follows that $W^{\perp+'}\cap Q(V)= W^{\perp'}\cap Q(V)$.
\end{proof} 
\begin{proposition}\label{M-ideal-Q(V)}
	Let $V$ be a complete order smooth $\infty$-normed space and let $W$ be a closed subspace of $V.$ Then $W$ is an $M$-ideal in $V$ if and only if $W^{\perp} \cap Q(V)$ is a split face of $Q(V)$.
\end{proposition}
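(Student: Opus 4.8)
The plan is to deduce the statement from Theorem~\ref{characterisation of M-ideal by positive cone order smooth infinite normed space}, which characterises $M$-ideals by the two conditions (1) $W^{\perp'+}$ is convex and (2) $V^{\ast+}=W^{\perp+}\oplus_1 W^{\perp'+}$. Thus it is enough to prove that (1) and (2) together are equivalent to the assertion that $W^\perp\cap Q(V)$ is a split face of $Q(V)$, and the bridge between the two pictures is a normalisation dictionary between the cone $V^{\ast+}$ and the convex set $Q(V)$. I would first record the identifications coming from $Q(V)\subseteq V^{\ast+}$: one has $W^\perp\cap Q(V)=W^{\perp+}\cap Q(V)$ and $\cone(W^\perp\cap Q(V))=W^{\perp+}$, hence, by the definition of the complementary face and Lemma~\ref{dual face}, $(W^\perp\cap Q(V))'_{Q(V)}=W^{\perp+'}\cap Q(V)=W^{\perp'}\cap Q(V)=W^{\perp'+}\cap Q(V)$; also $0$ lies in $W^\perp\cap Q(V)$ and in $W^{\perp'+}\cap Q(V)$, so the definition of split face is applicable. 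Finally I would note that $W^{\perp'+}$ is stable under multiplication by nonnegative scalars, so $W^{\perp'+}=\cone(W^{\perp'+}\cap Q(V))$; since the cone generated by a convex set is convex, $W^{\perp'+}$ is convex if and only if $W^{\perp'+}\cap Q(V)$ is.

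Suppose first that $W$ is an $M$-ideal in $V$. By Theorem~\ref{characterisation of M-ideal by positive cone order smooth infinite normed space} conditions (1) and (2) hold, so by Proposition~\ref{face} both $W^{\perp+}$ and $W^{\perp'+}$ are faces of $V^{\ast+}$. Since $Q(V)\subseteq V^{\ast+}$, it follows at once that $W^\perp\cap Q(V)=W^{\perp+}\cap Q(V)$ and $(W^\perp\cap Q(V))'_{Q(V)}=W^{\perp'+}\cap Q(V)$ are faces of $Q(V)$, each containing $0$. For the decomposition clause, given $f\in Q(V)\setminus\{0\}$, condition (2) provides $g_0\in W^{\perp+}$ and $h_0\in W^{\perp'+}$ with $f=g_0+h_0$ and $\Vert f\Vert=\Vert g_0\Vert+\Vert h_0\Vert$. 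If $g_0$ and $h_0$ are both nonzero I would put $\lambda=\Vert g_0\Vert\,\Vert f\Vert^{-1}\in(0,1)$, $g=\lambda^{-1}g_0$, $h=(1-\lambda)^{-1}h_0$, so that $g\in W^\perp\cap Q(V)$, $h\in W^{\perp'+}\cap Q(V)$, $\Vert g\Vert=\Vert h\Vert=\Vert f\Vert$ and $f=\lambda g+(1-\lambda)h$; the cases $g_0=0$ and $h_0=0$ place $f$ directly in $W^{\perp'+}\cap Q(V)$, respectively in $W^\perp\cap Q(V)$. Uniqueness of the representation in $(W^\perp\cap Q(V))\oplus_{c,1}(W^{\perp'+}\cap Q(V))$ would then be obtained by renormalising an arbitrary such representation and invoking the uniqueness part of (2). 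Hence $W^\perp\cap Q(V)$ is a split face of $Q(V)$.

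Conversely, suppose $W^\perp\cap Q(V)$ is a split face of $Q(V)$. Then $(W^\perp\cap Q(V))'_{Q(V)}=W^{\perp'+}\cap Q(V)$ is, by definition, a face of $Q(V)$, hence convex, so by the remark in the first paragraph $W^{\perp'+}$ is convex; this is condition (1). For condition (2), given $f\in V^{\ast+}\setminus\{0\}$ I would normalise to $\Vert f\Vert^{-1}f\in Q(V)$ and use its split-face representation $\Vert f\Vert^{-1}f=\lambda g+(1-\lambda)h$ with $g\in W^\perp\cap Q(V)$, $h\in W^{\perp'+}\cap Q(V)$ and $\Vert g\Vert=\Vert h\Vert=1$ (the common value equals $1$ by the Remark following the definition of split face). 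Then $f=g'+h'$ with $g'=\lambda\Vert f\Vert\,g\in W^{\perp+}$, $h'=(1-\lambda)\Vert f\Vert\,h\in W^{\perp'+}$ and $\Vert g'\Vert+\Vert h'\Vert=\lambda\Vert f\Vert+(1-\lambda)\Vert f\Vert=\Vert f\Vert$; uniqueness of this decomposition transfers from the uniqueness in the split-face representation. Thus conditions (1) and (2) hold, and Theorem~\ref{characterisation of M-ideal by positive cone order smooth infinite normed space} gives that $W$ is an $M$-ideal in $V$.

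The step I expect to require the most care is exactly this normalisation dictionary: one must check that the degenerate parameter values $\lambda\in\{0,1\}$ (equivalently $g_0=0$ or $h_0=0$) are compatible with, and do not defeat, the uniqueness requirements on both sides, and verify that the two faces genuinely contain $0$ so that the split-face definition applies. All of the substantive content --- the order cone decomposition and the construction of the $L$-projection --- is already packaged in Theorem~\ref{odr-cone-dec}, Proposition~\ref{face} and Theorem~\ref{characterisation of M-ideal by positive cone order smooth infinite normed space}, so what remains is essentially careful bookkeeping on top of those.
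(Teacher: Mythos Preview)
Your proposal is correct and follows essentially the same route as the paper: both directions go through Theorem~\ref{characterisation of M-ideal by positive cone order smooth infinite normed space} via the normalisation dictionary between $V^{\ast+}$ and $Q(V)$, together with Lemma~\ref{dual face}. The only noteworthy difference is that in the forward direction you obtain the face property of $W^{\perp'}\cap Q(V)$ by intersecting the face $W^{\perp'+}$ of $V^{\ast+}$ (from Proposition~\ref{face}) with $Q(V)$, whereas the paper re-proves it directly using that $W^{\perp'}$ is an $L$-summand; your shortcut is legitimate and slightly more economical.
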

\begin{proof} 
	First, let us assume that $W$ is an $M$-ideal in $V$. Then by Theorem \ref{characterisation of M-ideal by positive cone order smooth infinite normed space}, $W^{\perp'+}$ is convex and $V^{*+}= W^{\perp+}\oplus_{1} W^{\perp'+}$. Also, by Lemma \ref{dual face}, we have $(W^{\perp}\cap Q(V))_{Q(V)}'=W^{\perp'}\cap Q(V)$.
	We show that $W^{\perp'}\cap Q(V)$ is a  face of $Q(V)$. Let $f_{1}, f_{2}\in Q(V)$ be  such that $f=\alpha f_{1}+(1-\alpha)f_{2}\in W^{\perp'}\cap Q(V)$ for some $\alpha\in (0,1)$. %Then $f\in W^{\perp'+}.$ 
	As $V^{*+}= W^{\perp+}\oplus_{1} W^{\perp'+}$, there are unique $g_{1}, g_{2}
	\in W^{\perp+}, h_{1}, h_{2}\in W^{\perp'+}$ such that $f_{i}= g_{i}+ h_{i},$ with $\Vert f_{i}\Vert= \Vert g_{i}\Vert +\Vert h_{i}\Vert $, for $i=1,2$. Then $f=(\alpha g_{1}+(1-\alpha)g_{2})+(\alpha h_{1}+ (1-\alpha)h_{2})= g+h,$ where
	$g=\alpha g_{1}+(1-\alpha)g_{2}$ and $h=\alpha h_{1}+ (1-\alpha)h_{2}.$ As $W^{\perp+}$ and $W^{\perp'+}$ are convex, we get that $g\in W^{\perp+}, h\in W^{\perp'+}.$ Next, as $W$ is an $M$-ideal of $V,$ $W^{\perp'}$ is an $L$-summand of $V^{\ast}$ so that $W^{\perp'+} - W^{\perp'+} \subseteq W^{\perp'}$. Thus $g = f - h \in W^{\perp'+} - W^{\perp'+} \subseteq W^{\perp'}$ so that $g\in W^{\perp'} \cap W^{\perp} = \{ 0 \}$. Therefore, $g_{1}=g_{2}=0$ and consequently, $f_{1} = h_{1}, f_{2} = h_{2}\in W^{\perp'+} \cap Q(V)$ so that $(W\cap Q(V))_{Q(V)}'$ is a face of $Q(V)$. Similarly, we can prove that $W^{\perp} \cap Q(V)$ is also a face of $Q(V)$. 
	
	Now, we show that $W^{\perp} \cap Q(V)$ is a split face of $Q(V)$. For this, let $f \in Q(V)$. Since $Q(V)\subseteq V^{*+} = W^{\perp+}\oplus_{1} W^{\perp'+}$, there are  unique $g_{0}\in W^{\perp+}, h_{0}\in W^{\perp'+}$ such that $f= g_{0}+ h_{0}$ and  $\Vert f\Vert=\Vert g_{0}\Vert+ \Vert h_{0}\Vert$. Put $g= \Vert f\Vert \Vert g_{0}\Vert^{-1}g_{0}, h= \Vert f\Vert \Vert h_{0}\Vert^{-1}h_{0}$. Then $g\in W^{\perp}\cap Q(V)$ and $h\in W^{\perp'+}\cap Q(V))=(W^{\perp} \cap Q(V))'_{Q(V)}$ and consequently, 
	$$f=(\Vert g_{0}\Vert \Vert f\Vert^{-1})  g+(\Vert h_{0}\Vert \Vert f\Vert^{-1}) h\in W^{\perp}\cap Q(V)\oplus_{c,1} (W^{\perp}\cap Q(V))'_{Q(V)}.$$ 
	Hence $W^{\perp}\cap Q(V)$ is a split face of $Q(V)$.
	
	Conversely, assume that $W^{\perp}\cap Q(V)$ is a split face of $Q(V)$. We show that $W^{\perp'+}$ is convex and  that $V^{*+}= W^{\perp+}\oplus_{1} W^{\perp'+}$. Let $f, g\in W^{\perp'+}$ and $\alpha\in (0,1)$. Put  $h=\alpha f+ (1-\alpha)g$.  If we put  $\lambda = \max \{ \Vert f \Vert , \Vert g \Vert \}$, $h_{0} = \lambda^{-1} h$, $f_{0} = \lambda^{-1} f$ and $g_{0}= \lambda^{-1}g$, then $f_{0}, g_{0}\in W^{\perp'}\cap Q(V)$ with $h_0 = \alpha f_0 + (1-\alpha) g_0$.  Since $W^{\perp}\cap Q(V)$ is a split face of $Q(V)$, $W^{\perp'}\cap Q(V)= (W^{\perp}\cap Q(V))'_{Q(V)}$ is convex.  Thus $h_{0} \in W^{\perp'} \cap Q(V)$ and consequently, $h \in W^{\perp'+}$. Therefore, $W^{\perp'+}$ is convex.
	
	Finally, let $f\in V^{*+}\setminus \{0\}$. Then $f_{1} = \Vert f\Vert^{-1}f \in Q(V) = W^{\perp} \cap Q(V) \oplus_{c,1} W^{\perp'} \cap Q(V)$. Thus there exist unique $g_{1}\in W^{\perp}\cap Q(V)$, $h_{1}\in W^{\perp'}\cap Q(V)$ and $\alpha\in [0,1]$ such that $f_{1}= \alpha g_{1}+(1-\alpha)h_{1}$ with $\Vert g_{1} \Vert= \Vert h_{1}\Vert= \Vert f_{1} \Vert=1$. Then $f= g+h$, where $g = \alpha \Vert f \Vert g_{1} \in W^{\perp+}$ and  $(1-\alpha) \Vert f \Vert h_{1} \in W^{\perp'+}$. Also  
	$$\Vert f\Vert =\Vert g+ h\Vert\leq \Vert g\Vert+\Vert h\Vert \leq \alpha \Vert f \Vert + (1-\alpha) \Vert f\Vert= \Vert f\Vert$$ 
	so that $\Vert f\Vert= \Vert g\Vert + \Vert h\Vert$. This completes the proof.
\end{proof}

\section{\bf $M$-ideals and adjoining of an order unit}

Let $V$ be an order smooth $\infty$-normed space and consider $\tilde{V}=V\oplus \mathbb{R}$. If we define $\tilde{V}^+ = \{ (v,\alpha): l_{V}(v) \leq \alpha \}$ where $l_{V}(v) = \inf \{ \Vert u \Vert: u, u + v \in V^+ \}$, then $(\tilde{V}, \tilde{V}^+)$ becomes a real ordered vector space. In this case, $(0, 1)$ acts as an order unit and $\tilde{V}^+$ is Archimedean so that $(\tilde{V}, (0, 1))$ is an order unit space. Moreover, $v \mapsto (v, 0)$ is an isometric order isomorphic embedding of $V$ in $(\tilde{V}, (0, 1))$. Further, $(\tilde{V}, (0, 1))$ is determined uniquely by $V$ upto a unital order isomorphism in such a way that $V$ has co-dimension $1$ in $(\tilde{V}, (0, 1))$. For a detailed information one can see \cite[Section 4]{Karn2}. In this section, we obtain the conditions under which $V$ is an $M$-ideal in $\tilde{V}.$ The following result (due to Alfsen and Effros) will be used for this purpose. Throughout this section, we shall assume that all order normed 
spaces are (norm) complete.

\begin{theorem}\label{characterization of order unit space} \cite[Theorem 6.10]{AE}
	Let $(V, e)$ be an order unit space and let $W$ be a closed subspace of $V.$ Then following sets of statements are equivalent:
	\begin{enumerate}
		\item $W$ is an $M$-ideal.
		\item  $W$ satisfies each of the following conditions:
		
		\begin{enumerate}
			\item  $W$ is positively generated;
			\item $W$ is an order ideal;
			\item $(V/W, \pi(e))$ is an order unit space; 
			\item Given $v,w\in V^+$ and $\epsilon > 0,$
			$$\pi([0,v])\cap \pi([0,w])\subset \pi([0,v+\epsilon e]\cap [0,w+\epsilon e])$$
			where $[x,y]:=\{z\in V: x\leq z\leq y\}$ for $x\leq y$ in $V.$  
		\end{enumerate}
	\end{enumerate}
	Here $\pi: V \to V/W$ is the canonical quotient mapping.
\end{theorem}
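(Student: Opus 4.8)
The plan is to prove the equivalence $(1)\Leftrightarrow(2)$ by two separate implications, exploiting the fact that an order unit space $(V,e)$ is in particular a complete order smooth $\infty$-normed space, so that Theorem \ref{characterisation of M-ideal by positive cone order smooth infinite normed space} is available: $W$ is an $M$-ideal in $V$ if and only if $W^{\perp'+}$ is convex and $V^{\ast+}=W^{\perp+}\oplus_{1}W^{\perp'+}$. Throughout I would identify $V$ with $A(K)$ for $K:=S(V)$ (with $e$ corresponding to the constant function $1$) and write $F:=W^{\perp}\cap K$. One general remark would be used repeatedly: once (a) holds, $W^{\perp}$ is positively generated in $V^{\ast}$ with $W^{\perp+}=\cone(F)$, whence $W=\{a\in V:a|_{F}=0\}$, $W^{\perp+}$ is a face of $V^{\ast+}$, and $F$ is a closed face of $K$.

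For $(1)\Rightarrow(2)$, suppose $W$ is an $M$-ideal. By Theorem \ref{characterisation of M-ideal by positive cone order smooth infinite normed space} there is an $L$-projection $L\colon V^{\ast}\to W^{\perp}$ with $V^{\ast}=W^{\perp}\oplus_{1}W^{\perp'}$, and by Theorem \ref{M-ideal and splits face in A(K)}, $F$ is a closed split face of $K$. Condition (b) is immediate: if $0\le v\le w$ with $w\in W^{+}$, then $v$ vanishes on $F$, so $v\in W^{+}$. For (a), I would write an $a\in W$ using the barycentric maps attached to the split face $K=F\oplus_{c}F_{K}^{C}$: since $a|_{F}=0$, $a$ is determined by its ``complementary part'', which, being (a restriction of) an affine continuous function on a compact convex set, is a difference of positives; lifting back gives $a=a_{1}-a_{2}$ with $a_{i}\in W^{+}$. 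For (c), $W^{\perp}=(V/W)^{\ast}$ is an $L$-summand of the base normed space $V^{\ast}$, hence itself base normed, so by the order-unit/base-normed duality (\cite{Karn2}, \cite{WK}) $(V/W,\pi(e))$ is an order unit space. For (d), given $0\le a_{1}\le v$ and $0\le a_{2}\le w$ with $a_{1}-a_{2}\in W$ (so $a_{1}=a_{2}$ on $F$), I would build the required common element by setting this value on $F$ and interpolating on the complementary face so that the resulting $b$ obeys $0\le b\le v+\epsilon e$ and $b\le w+\epsilon e$ on $K$; the affineness and continuity are arranged by an Edwards-type separation argument on $K$, the $\epsilon$ absorbing the failure of exact interpolation in $A(K)$.

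For $(2)\Rightarrow(1)$, assume (a)--(d). Since $V^{\ast}$ is a complete order smooth $1$-normed space satisfying $(OS.1.2)$ (Theorem \ref{duality-of-p-thry}) and $W^{\perp}$ is a closed cone in it, Theorem \ref{odr-cone-dec} already yields, for every $f\in V^{\ast+}$, a decomposition $f=g+h$ with $g\in W^{\perp+}$, $h\in W^{\perp'+}$ and $\Vert f\Vert=\Vert g\Vert+\Vert h\Vert$. Thus the only things left to establish are the \emph{uniqueness} of this decomposition and the \emph{convexity} of $W^{\perp'+}$, and these are exactly what (a)--(d) are designed to supply. Concretely, (a)--(c) pin down $h$ as the largest member of $W^{\perp'+}$ dominated by $f$ (the positive, norm-preserving ``extension of $f$ along $V/W$''), with (b) (hereditariness of $W^{+}$) keeping $g=f-h$ positive; condition (d), read through the bipolar, forces this choice to be unique and forces $W^{\perp'+}$ to be closed under addition, via a passage to the limit $\epsilon\to 0$. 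Once both hypotheses of Theorem \ref{characterisation of M-ideal by positive cone order smooth infinite normed space} are verified, $W$ is an $M$-ideal in $V$.

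The hard part is the implication $(2)\Rightarrow(1)$, and within it the use of condition (d). Because exact order interpolation fails in $A(K)$, one cannot split states directly; the approximate statement (d) must be leveraged through a compactness/separation argument followed by a limiting procedure to recover simultaneously the uniqueness of the cone decomposition and --- since $W^{\perp'}$ is not a priori a subspace, let alone $W^{\perp'+}$ convex --- the convexity of $W^{\perp'+}$. A subsidiary but genuine point is getting $V/W$ to be an \emph{Archimedean} order unit space with unit $\pi(e)$ rather than merely admitting $\pi(e)$ as an algebraic order unit; this is precisely what condition (c) supplies, and interlocking (c) with (a), (b), (d) to reconstruct the full $L$-summand splitting of $V^{\ast}$ is the heart of the argument.
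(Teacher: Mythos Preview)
The paper does not prove this statement; it is quoted verbatim from \cite[Theorem 6.10]{AE} and used as a black box in Section~4. There is therefore no proof in the paper to compare against.

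That said, your outline has real gaps that would need to be closed before it could stand as a proof. In the direction $(1)\Rightarrow(2)$ you invoke Theorem~\ref{M-ideal and splits face in A(K)} and Theorem~\ref{characterisation of M-ideal by positive cone order smooth infinite normed space}, which is legitimate in the paper's logical order, and (b) and (c) are essentially correct. But your argument for (a) (``lifting back gives $a=a_1-a_2$ with $a_i\in W^+$'') and especially for (d) (``an Edwards-type separation argument on $K$, the $\epsilon$ absorbing the failure of exact interpolation'') are not arguments but placeholders; condition (d) is precisely the delicate Riesz-type approximate interpolation property, and producing the single $b$ with $0\le b\le v+\epsilon e$, $0\le b\le w+\epsilon e$ and $\pi(b)=\pi(a_1)=\pi(a_2)$ from the split-face structure requires a genuine construction, not a gesture.

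The direction $(2)\Rightarrow(1)$ is, as you recognise, the hard one, and here your sketch does not contain the key idea. You correctly observe that Theorem~\ref{odr-cone-dec} supplies \emph{some} decomposition $f=g+h$ with $g\in W^{\perp+}$, $h\in W^{\perp'+}$, and that what remains is uniqueness together with convexity of $W^{\perp'+}$. But the phrases ``(a)--(c) pin down $h$ as the largest member of $W^{\perp'+}$ dominated by $f$'' and ``condition (d), read through the bipolar, forces this choice to be unique and forces $W^{\perp'+}$ to be closed under addition, via a passage to the limit $\epsilon\to 0$'' do not explain \emph{how}. In the Alfsen--Effros argument the mechanism is quite specific: condition (d) is exactly what guarantees that the canonical restriction map $V^{\ast+}\to (V/W)^{\ast+}\times W^{\ast+}$ has the additivity needed to make the induced splitting linear, and without spelling out that passage (or an equivalent one) you have not proved anything. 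As written, your $(2)\Rightarrow(1)$ is an accurate description of what must be shown, not a proof of it.
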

We shall apply this result to characterize approximate order unit spaces as those order smooth $\infty$-normed spaces which are $M$-ideals in order unit spaces obtained by adjoining order units to these spaces. First, we prove the sufficient condition.
\begin{theorem}\label{AOU space V is M-ideal in its adjoining order unit}
	Let $(V,V^+,\{e_\lambda\}_{ \lambda \in D})$ be an approximate order unit space, and let $(\tilde{V},\tilde{V}^+)$ be the order unit space obtained 
	by adjoining an order unit to $V.$ Then $V$ is an $M$-ideal in $\tilde{V}.$
\end{theorem}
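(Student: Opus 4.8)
The plan is to verify directly that $V$ satisfies the four conditions in part (2) of Theorem \ref{characterization of order unit space} applied to the order unit space $(\tilde V, (0,1))$, with $W = V$ (embedded as $V \times \{0\}$). Throughout, $\pi \colon \tilde V \to \tilde V / V \cong \mathbb R$ denotes the quotient map, which under the identification sends $(v, \alpha)$ to $\alpha$.

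\textbf{Conditions (a), (b), (c).} For (a), $V$ is positively generated because $V$ is an order smooth $\infty$-normed space (its cone $V^+$ is generating by hypothesis). For (b), I must check that $V$ is an order ideal in $\tilde V$: if $0 \le (v,\alpha) \le (w, 0)$ with $(w,0) \in V \times \{0\}$, then $0 \le \alpha$ and $(w,0) - (v,\alpha) = (w - v, -\alpha) \in \tilde V^+$ forces $l_V(w-v) \le -\alpha \le 0$, which together with $\alpha \ge 0$ from $(v,\alpha) \in \tilde V^+$ gives $\alpha = 0$; hence $(v, \alpha) = (v, 0) \in V \times \{0\}$. For (c), $(\tilde V / V, \pi(0,1)) \cong (\mathbb R, 1)$ is trivially an order unit space; the content here is just that the quotient cone is Archimedean and proper, which holds because $\pi(v, \alpha) = \alpha$ and $\pi(\tilde V^+) = [0, \infty)$.

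\textbf{Condition (d).} This is the substantive step and I expect it to be the main obstacle. I need: given $(v, \alpha), (w, \beta) \in \tilde V^+$ and $\epsilon > 0$,
$$\pi\bigl([0,(v,\alpha)]\bigr) \cap \pi\bigl([0,(w,\beta)]\bigr) \subseteq \pi\bigl([0,(v,\alpha)+\epsilon(0,1)] \cap [0,(w,\beta)+\epsilon(0,1)]\bigr).$$
Since $\pi$ reads off the last coordinate, an element of the left-hand side is a scalar $t$ with $0 \le t \le \alpha$ and $0 \le t \le \beta$, i.e. $t \in [0, \min(\alpha,\beta)]$; I must produce $(z, t) \in \tilde V$ with $0 \le (z,t) \le (v, \alpha + \epsilon)$ and $0 \le (z,t) \le (w, \beta + \epsilon)$. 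Unwinding the definition of $\tilde V^+$, this amounts to finding $z \in V$ such that $l_V(z) \le t$, $l_V(v - z) \le \alpha + \epsilon - t$, and $l_V(w - z) \le \beta + \epsilon - t$. The natural candidate is $z = 0$ when $t = 0$, and more generally I would exploit the approximate order unit: choose $\lambda$ large enough (using the definition of $\{e_\lambda\}$ and of $l_V$, together with the approximate decomposition property $(O.\infty.2)$ that the seminorm $\Vert\cdot\Vert_a$ satisfies) so that an appropriately scaled multiple $z = s\, e_\lambda$ of the order unit absorbs the required inequalities up to the slack $\epsilon$. The key estimate will be that $l_V(u) \le \Vert u \Vert_a$ and that for $u \in V^+$ one has $l_V(u) = 0$; combined with $l_V(v) \le \alpha$, $l_V(w) \le \beta$ (from membership in $\tilde V^+$) and the triangle-type inequality $l_V(v - z) \le l_V(v) + l_V(-z) \le l_V(v) + \Vert z \Vert_a$, choosing $z$ small in $\Vert \cdot \Vert_a$-norm but with $l_V(z) \le t$ should close the gap; here the freedom in $t \in [0, \min(\alpha,\beta)]$ and the $\epsilon$-slack are exactly what make this possible.

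\textbf{Conclusion.} Once (a)--(d) are established, Theorem \ref{characterization of order unit space} yields that $V$ is an $M$-ideal in $\tilde V$. The one point requiring care is that Theorem \ref{characterization of order unit space} is stated for order unit spaces, and $(\tilde V, (0,1))$ is genuinely an order unit space (with Archimedean cone) as recalled in the opening paragraph of Section 4, so the hypothesis is met. I anticipate that verifying condition (d) will require the most bookkeeping with the function $l_V$ and the approximate order unit net, while (a)--(c) are essentially formal.
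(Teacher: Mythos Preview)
Your overall plan matches the paper's exactly: verify conditions (a)--(d) of Theorem~\ref{characterization of order unit space} for $W=V$ inside $\tilde V$. Your treatment of (a)--(c) is correct and essentially identical to the paper's Proposition~\ref{essential proposition1}, and you correctly reduce (d) to producing, for each $t\in[0,\min(\alpha,\beta)]$, an element $(z,t)$ with $l_V(z)\le t$, $l_V(v-z)\le\alpha+\epsilon-t$, and $l_V(w-z)\le\beta+\epsilon-t$.

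The gap is in your mechanism for (d). Your sketch relies on the subadditivity bound $l_V(v-z)\le l_V(v)+l_V(-z)\le \alpha+\Vert z\Vert_a$ and on ``choosing $z$ small in $\Vert\cdot\Vert_a$-norm.'' But you need $l_V(v-z)\le \alpha+\epsilon-t$, and your bound only gives $\alpha+\Vert z\Vert_a$; once $t>\epsilon$ (and $t$ can be as large as $\min(\alpha,\beta)$) no small $z$ will do. Even with the natural choice $z=-t\,e_\lambda$, the triangle inequality only yields $l_V(v+t e_\lambda)\le l_V(v)+l_V(t e_\lambda)=l_V(v)\le\alpha$, which is still not $\le\alpha+\epsilon-t$ for $t>\epsilon$.

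The paper avoids the triangle inequality altogether and uses the approximate order unit \emph{directly}. From $l_V(v)\le\alpha$ one finds $\lambda$ with $v+(\alpha+\epsilon)e_\lambda\in V^+$ (and similarly for $w$, using the same $\lambda$ since the net is increasing). Then for $z=-t\,e_\lambda$ one has $v-z=v+t e_\lambda$ and the element $(\alpha+\epsilon-t)e_\lambda\in V^+$ of norm $\le\alpha+\epsilon-t$ satisfies $(\alpha+\epsilon-t)e_\lambda+(v+t e_\lambda)=v+(\alpha+\epsilon)e_\lambda\in V^+$, giving $l_V(v-z)\le\alpha+\epsilon-t$ on the nose. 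The bound $l_V(z)=l_V(-t e_\lambda)\le t$ comes from $\Vert e_\lambda\Vert\le 1$. So the correct $z$ is \emph{not} small; its norm is of order $t$, and the inequality is obtained by exhibiting an explicit witness in the definition of $l_V$, not via subadditivity. This is precisely where the hypothesis that $\{e_\lambda\}$ is an approximate order unit (rather than $V$ being merely an order smooth $\infty$-normed space) enters the proof.
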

We shall prove this result in several steps. 
\begin{proposition}\label{essential proposition1}
	Let $V$ be an order smooth $\infty$-normed space and $\tilde{V}$ be the order unit space obtained by adjoining an order unit to $V.$ Then 
	\begin{enumerate}
		\item  $V$ is positively generated.
		\item $V$ is an order ideal in $\tilde{V}.$
		\item $(\tilde{V}/V, \tilde{\pi}((0,1)))$ is an order unit space.
	\end{enumerate}
	Here $\tilde{\pi}: \tilde{V} \to \tilde{V}/ V$ is the natural quotient mapping.
\end{proposition}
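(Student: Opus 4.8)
The plan is to verify the three assertions in sequence, using the explicit description $\tilde{V}^+ = \{(v,\alpha) : l_V(v) \le \alpha\}$ and the order-embedding $v \mapsto (v,0)$ of $V$ in $\tilde{V}$.

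For (1), positive generation of $V$ is immediate from the fact that $V$ is an order smooth $\infty$-normed space: by definition the cone $V^+$ is generating in $V$, so $V = V^+ - V^+$ and there is nothing further to prove.

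For (2), I would show $V$ is an order ideal in $\tilde{V}$, i.e. that $V$ is positively generated (done) and that $0 \le (v,\alpha) \le (w,0)$ with $(w,0) \in V$ forces $(v,\alpha) \in V$, which amounts to showing $\alpha = 0$. From $(v,\alpha) \ge 0$ we have $l_V(v) \le \alpha$, and from $(w,0) - (v,\alpha) = (w-v, -\alpha) \ge 0$ we get $l_V(w-v) \le -\alpha$, hence $\alpha \le -l_V(w-v) \le 0$ (since $l_V \ge 0$ always). Combined with $\alpha \ge l_V(v) \ge 0$, this gives $\alpha = 0$, so $(v,\alpha) = (v,0) \in V$. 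Here I am using only elementary properties of $l_V$, namely nonnegativity; these are recorded in \cite[Section 4]{Karn2}.

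For (3), the task is to identify $(\tilde{V}/V, \tilde{\pi}((0,1)))$ as an order unit space. Since $V$ has codimension $1$ in $\tilde{V}$, the quotient $\tilde{V}/V$ is one-dimensional, spanned by $\tilde{\pi}((0,1))$, and the quotient cone $\tilde{\pi}(\tilde{V}^+)$ is generated by $\tilde{\pi}((0,1))$ itself (every positive element $(v,\alpha)$ with $l_V(v) \le \alpha$ maps to $\alpha \,\tilde{\pi}((0,1))$ with $\alpha \ge 0$, using that $l_V(v) \ge 0$). Thus $\tilde{V}/V \cong \mathbb{R}$ as an ordered vector space with positive cone $\mathbb{R}_{\ge 0}$ and order unit $\tilde{\pi}((0,1))$, which is trivially an order unit space (the one-dimensional case: the cone is proper, generating, and Archimedean). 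The main point to check carefully is that the quotient cone is actually proper, i.e. that $\tilde{\pi}((0,1))$ is not itself in $-\tilde{\pi}(\tilde{V}^+)$; this follows because $(0,-1) \notin \tilde{V}^+ + V$, equivalently there is no $v \in V$ with $l_V(v) \le -1$, which holds since $l_V \ge 0$.

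The genuinely nontrivial input for Theorem \ref{AOU space V is M-ideal in its adjoining order unit} will not be this proposition but the verification of condition (2)(d) of Theorem \ref{characterization of order unit space} (the Riesz-type approximate interpolation condition on images of order intervals), which presumably forms the next step and is where the approximate-order-unit hypothesis on $V$ — as opposed to mere order smoothness — is actually used. In the present proposition, however, no such hypothesis is needed, and the whole argument is a routine unwinding of the definition of $\tilde{V}^+$ together with $l_V \ge 0$.
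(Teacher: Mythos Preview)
Your proposal is correct and follows essentially the same line as the paper. The only differences are presentational: for (2) the paper simply cites the construction of $\tilde{V}$ in \cite[Theorem~4.1]{Karn2}, whereas you spell out the order-ideal check directly via $l_V\ge 0$; and for (3) the paper verifies the Archimedean property of $(\tilde{V}/V)^+$ by the usual $\frac{1}{n}$-argument, whereas you instead identify $\tilde{V}/V$ explicitly with $(\mathbb{R},\mathbb{R}_{\ge 0})$, which amounts to the same thing.
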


\begin{proof}
	Condition (1) follows from the definition of $V$ and condition (2) follows from the construction of $\tilde{V}$ (see \cite[Theorem 4.1]{Karn2}). To prove (3), first note 
	that the natural quotient map $\tilde{\pi}: \tilde{V} \to \tilde{V}/ V$ is positive and that $\tilde{\pi}(0, 1)$ is an order unit  for $\tilde{V}/ V.$ 
	We show that $(\tilde{V}/V)^+$ is an Archimedean. Let $\tilde{\pi}(u, \alpha)\in \tilde V/V$ such that $\tilde{\pi}(u,\alpha)\leq \frac{1}{n}
	\tilde{\pi}(0,1)$ for all $n\in \mathbb{N}.$ Then $\tilde{\pi}(0,\frac{1} {n}-\alpha)=\tilde{\pi}(-u,\frac{1}{n}-\alpha)\geq 0$ so that  
	$\frac{1}{n}-\alpha\geq 0 $ for all $n\in\mathbb{N}.$ Consequently, $\tilde{\pi}(u,\alpha)=\tilde{\pi} (0,\alpha)\leq 0.$  
\end{proof}

\begin{lemma}\label{lemma for V is a M-ideal}
	Let $V$ be an order smooth $\infty$-normed space and $\tilde{V}$ be the order unit space obtained by an order unit to $V.$ If 
	$\tilde{\pi}: \tilde{V} \to \tilde{V}/ V$ is the natural quotient map, then for all $(u,\lambda)\in \tilde{V}^+,$ we have 
	$$\tilde{\pi}[(0, 0), (u,\lambda)] =\{\tilde{\pi}(0,\mu):  0\leq \mu\leq \lambda\}.$$
\end{lemma}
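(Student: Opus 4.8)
The plan is to work directly with the explicit description of $\tilde V^+$ and of the gauge $l_V$, using that $V=\ker\tilde\pi$, so that $\tilde\pi(w,\gamma)=\tilde\pi(0,\gamma)$ for every $(w,\gamma)\in\tilde V$ (indeed $(w,\gamma)-(0,\gamma)=(w,0)\in V$). I will first record the elementary properties of $l_V$ that the argument needs: $l_V(v)\ge 0$ for all $v\in V$ (it is an infimum of norms), $l_V(0)=0$, and $l_V(tv)=t\,l_V(v)$ for all $t\ge 0$; all three are immediate from $l_V(v)=\inf\{\|z\|:z,z+v\in V^+\}$. Note also that $(u,\lambda)\in\tilde V^+$ means exactly $l_V(u)\le\lambda$, which in particular forces $\lambda\ge 0$.

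For the inclusion ``$\subseteq$'' I would take $(w,\gamma)\in[(0,0),(u,\lambda)]$. The relation $(0,0)\le(w,\gamma)$ says $(w,\gamma)\in\tilde V^+$, hence $\gamma\ge l_V(w)\ge 0$; the relation $(w,\gamma)\le(u,\lambda)$ says $(u-w,\lambda-\gamma)\in\tilde V^+$, hence $\lambda-\gamma\ge l_V(u-w)\ge 0$, i.e. $\gamma\le\lambda$. Since $\tilde\pi(w,\gamma)=\tilde\pi(0,\gamma)$, this yields $\tilde\pi[(0,0),(u,\lambda)]\subseteq\{\tilde\pi(0,\mu):0\le\mu\le\lambda\}$.

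For the reverse inclusion I would fix $\mu\in[0,\lambda]$ and look for a preimage among the scalar multiples $tu$, $t\in[0,1]$. Concretely, I claim there is $t\in[0,1]$ with $(tu,\mu)\in[(0,0),(u,\lambda)]$; by positive homogeneity of $l_V$ this amounts to $t\,l_V(u)\le\mu$ and $(1-t)\,l_V(u)\le\lambda-\mu$. If $l_V(u)=0$ any $t\in[0,1]$ works. If $l_V(u)>0$ the two inequalities read $1-\frac{\lambda-\mu}{l_V(u)}\le t\le\frac{\mu}{l_V(u)}$, and a $t\in[0,1]$ satisfying them exists because $\mu\ge 0$, $\lambda-\mu\ge 0$, and $l_V(u)\le\lambda$ (the last inequality is precisely what makes $1-\frac{\lambda-\mu}{l_V(u)}\le\frac{\mu}{l_V(u)}$). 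For such a $t$ one has $\tilde\pi(tu,\mu)=\tilde\pi(0,\mu)$, which gives the reverse inclusion and hence the lemma.

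I do not anticipate a genuine obstacle: the only step requiring a small idea is to search for the preimage among scalar multiples of $u$, after which everything collapses to the single numerical inequality $l_V(u)\le\lambda$ coming from $(u,\lambda)\in\tilde V^+$ together with homogeneity of $l_V$; the verification of the basic properties of $l_V$ is routine and can be relegated to a line.
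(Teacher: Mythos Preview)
Your proof is correct and follows essentially the same approach as the paper: both directions use that $\tilde\pi(w,\gamma)=\tilde\pi(0,\gamma)$ since $V=\ker\tilde\pi$, and for the reverse inclusion both search for a preimage of the form $(tu,\mu)$ with $t\in[0,1]$. The only minor difference is that the paper simply writes down the explicit choice $t=\mu/\lambda$ (which one checks directly satisfies $0\le(\tfrac{\mu}{\lambda}u,\mu)\le(u,\lambda)$ by positive homogeneity of $l_V$ and $l_V(u)\le\lambda$), whereas you argue for the existence of a suitable $t$ via the pair of inequalities; your version has the small advantage of not dividing by $\lambda$ and hence handling $\lambda=0$ without a separate case.
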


\begin{proof}
	Let us consider the order interval $[(0, 0), (u,\lambda)],$ where $(u,\lambda)\in \tilde{V}^+.$ Now for any $\mu \in [0, \lambda],$ we have $0\leq (\frac{\mu}{\lambda}u, \mu)\leq (u,\lambda).$ Thus 
	$$\tilde{\pi}(0,\mu) = \tilde{\pi}(\frac{\mu}{\lambda}u,\mu)\in \tilde{\pi}[(0,0),(u,\lambda)] .$$ Conversely, let $(x,\mu)$ in $\tilde V $ such that 
	$0\leq (x,\mu)\leq(u,\lambda)$ in $(\tilde{V},\tilde{V}^+).$ Then we have $0\leq \mu\leq \lambda.$ Now the observation 
	$\tilde{\pi}(x,\mu)=\tilde{\pi}(0,\mu)$ completes the proof. 
\end{proof}
\begin{proof}[\bf Proof of Theorem \ref{AOU space V is M-ideal in its adjoining order unit}]
	\hfill 
	
	Let $(u_i,\gamma_i) \in \tilde{V}^+,$ for $i=1,2$ and let $\epsilon >0$. We shall show that 
	\begin{align*} 
	\tilde{\pi}([(0,0),(u_1,\gamma_1)]) &\cap \tilde{\pi}([(0,0),(u_2,\gamma_2)]) \\ 
	&\subset \tilde{\pi}([(0,0),(u_1,\gamma_1+\epsilon)]\cap [(0,0),(u_2,\gamma_2+\epsilon)]).
	\end{align*}
	Since $(u_i,\gamma_i) \in \tilde{V}^+,$ we have $l_V(u_i)\leq \gamma_i$ for $i=1,2$. Thus as $\epsilon> 0$ and as $\{ e_{\lambda} \}$ is an approximate order unit for $V$, there exist $\lambda$ such that 
	$u_i + (\gamma_i+\epsilon)e_{\lambda_i} \in V^+$ for $i = 1, 2$. Put $\gamma = \min\{\gamma_1,\gamma_2\}.$ Then for $i = 1, 2$ we have 
	$$u_i + \gamma e_{\lambda} + ( \gamma_i - \gamma + \epsilon ) e_{\lambda} = u_i + ( \gamma_i + \epsilon ) e_{\lambda} \in V^+$$
	so that $l_V( u_i  + \gamma e_{\lambda}) \le \gamma_i - \gamma + \epsilon$, or equivalently, $( u_i  + \gamma e_{\lambda}, \gamma_i - \gamma + \epsilon) \in  \tilde{V}^+$. Thus $(- \gamma e_{\lambda}, \gamma ) \le (u_i, \gamma_i + \epsilon )$ for $i = 1, 2$. As $\Vert e_{\lambda} \Vert \le 1$, we have $0 \le (e_{\lambda}, 0) \le (0, 1)$ so that $(- \gamma e_{\lambda}, \gamma ) \in \tilde{V}^+$. Hence $(- \gamma e_{\lambda}, \gamma ) \in [(0,0),(u_1,\gamma_1+\epsilon)]\cap [(0,0),(u_2,\gamma_2+\epsilon)]$. Now the result follows from Proposition \ref{essential proposition1}.
\end{proof}   
\begin{remark}
	Let  $X$ be a Banach space, and let $Y$ be a closed subspace of $X$ such that $Y\neq 0.$ It follows from \cite[Proposition 2.2]{AE} that the $M$-ideals of $Y$ 
	are precisely the $M$-ideals of $X$ contained in $Y.$ Thus each $M$-ideal of an approximate order unit space $(V, \{e_\lambda \})$ is an $M$-ideal of $(\tilde{V}, \tilde{V}^+).$ 
\end{remark} 
Now, we proceed to prove the converse of Theorem \ref{AOU space V is M-ideal in its adjoining order unit}. More precisely, we aim to prove that a complete order smooth 
$\infty$-normed space $V$ is an $M$-ideal in $\tilde{V}$, only if $V$ is an approximate 
order unit space. 

\begin{lemma}\label{concept of adjoining order unit}
	Let $V$ be an order smooth $\infty$-normed space and consider $\tilde{V}$, the order unit space obtained by adjoining an order unit to $V.$  Then $S(\tilde{V})$ is affinely homeomorphic to $Q(V).$  
\end{lemma}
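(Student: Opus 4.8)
The plan is to write down an explicit affine bijection between $Q(V)=\{f\in V^{\ast +}:\|f\|\le 1\}$ and $S(\tilde V)$ and then promote it to a homeomorphism by a compactness argument. Recall that $V$ sits inside $\tilde V$ via the isometric order embedding $\iota(v)=(v,0)$, that $(0,1)$ is the order unit of $\tilde V$, and that $S(\tilde V)=\{F\in\tilde V^{\ast +}:\|F\|=1\}=\{F\in\tilde V^{\ast +}:F(0,1)=1\}$, the last equality holding because on an order unit space the norm of a positive functional equals its value at the order unit. Define $\Phi:S(\tilde V)\to V^{\ast}$ by $\Phi(F)=F\circ\iota$ and $\Psi:Q(V)\to\tilde V^{\ast}$ by $(\Psi f)(v,\alpha)=f(v)+\alpha$. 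First I would check that $\Phi$ maps into $Q(V)$: since $\iota$ is a positive isometry, $\Phi(F)$ is positive with $\|\Phi(F)\|\le\|F\|=1$, so $\Phi(F)\in Q(V)$.

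The only non-routine point is that $\Psi$ is well defined, i.e. $\Psi f\in S(\tilde V)$ whenever $f\in Q(V)$. Since $(\Psi f)(0,1)=1$ and $\Psi f$ is linear, it suffices to prove $\Psi f\ge 0$ on $\tilde V^+$; once this is known, boundedness and $\|\Psi f\|=1$ come for free because a positive linear functional on an order unit space is automatically bounded with norm equal to its value at the order unit. So let $(v,\alpha)\in\tilde V^+$, meaning $l_V(v)\le\alpha$. Given $\epsilon>0$, choose $u\in V^+$ with $u+v\in V^+$ and $\|u\|<l_V(v)+\epsilon\le\alpha+\epsilon$. Then $f(u+v)\ge 0$ while $f(u)\le\|f\|\,\|u\|\le\|u\|<\alpha+\epsilon$, so $f(v)=f(u+v)-f(u)>-(\alpha+\epsilon)$; letting $\epsilon\to 0$ gives $f(v)+\alpha\ge 0$. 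This computation, which uses both the infimum formula defining $l_V$ and the bound $\|f\|\le 1$, is the heart of the argument and the only place where any real thought is needed.

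Next I would verify that $\Phi$ and $\Psi$ are mutually inverse and affine. Affineness is immediate from linearity. For $f\in Q(V)$ one has $(\Phi\Psi f)(v)=(\Psi f)(v,0)=f(v)$, so $\Phi\circ\Psi=\mathrm{id}_{Q(V)}$. For $F\in S(\tilde V)$ one has $(\Psi\Phi F)(v,\alpha)=(\Phi F)(v)+\alpha=F(v,0)+\alpha F(0,1)=F\big((v,0)+\alpha(0,1)\big)=F(v,\alpha)$, using $F(0,1)=1$, so $\Psi\circ\Phi=\mathrm{id}_{S(\tilde V)}$.

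Finally the topological step. Both $Q(V)$ and $S(\tilde V)$ are weak$^{\ast}$-compact: each is the intersection of the (Banach--Alaoglu) weak$^{\ast}$-compact unit ball of the relevant dual with the weak$^{\ast}$-closed positive cone and, in the case of $S(\tilde V)$, with the weak$^{\ast}$-closed hyperplane $\{F:F(0,1)=1\}$. The map $\Phi=\iota^{\ast}|_{S(\tilde V)}$ is weak$^{\ast}$-to-weak$^{\ast}$ continuous, being the restriction of the adjoint of a bounded operator. Thus $\Phi$ is a continuous affine bijection from a compact space onto a Hausdorff space, hence an affine homeomorphism, and so $S(\tilde V)$ is affinely homeomorphic to $Q(V)$. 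I do not anticipate any obstacle beyond the positivity verification; everything else is routine bookkeeping with the construction of $\tilde V$, standard facts about order unit spaces, and the definitions of the relevant weak$^{\ast}$ topologies.
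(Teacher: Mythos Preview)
Your proof is correct and follows essentially the same route as the paper: both construct the map $f\mapsto\tilde f$ (your $\Psi$) with $\tilde f(v,\alpha)=f(v)+\alpha$, and the positivity verification via $l_V(v)\le\alpha$ and the choice of $u\in V^+$ with $\|u\|<\alpha+\epsilon$ is literally the same computation. You are simply more explicit than the paper about the inverse map $\Phi$ and about the topological step, invoking compactness to upgrade the continuous bijection to a homeomorphism where the paper only says ``routine to check.''
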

\begin{proof}
	Let $g\in Q(V).$ Define $\tilde{g}(v, \alpha ) = g(v) + \alpha$ for $(v, \alpha ) \in \tilde{V}.$ If $(v,\alpha) \in \tilde{V}^+,$ then for $\epsilon>0,$ there exist 
	$u\in V^+$ such that $u+v\geq 0$ and $||u||<\alpha+\epsilon.$ Thus 
	$$\tilde{g}(v,\alpha) = g(v)+\alpha\geq -g(u)+\alpha\geq-||u||+\alpha>-\epsilon.$$
	Since $\epsilon$ is independent of $g,$ we see that $\tilde{g}$ is positive linear map on $\tilde{V}$ with $\tilde{g}(0,1)=1.$ So $\tilde{g}$ in 
	$S(\tilde{V}).$ Further, if  $h \in S(\tilde{V})$ is any extension of $g,$ then $h(0, 1) = 1 = \tilde{g}(0, 1)$ so that $\tilde{g} = h.$ Thus each $g \in Q(V)$ has a unique 
	extension $\tilde{g} \in S(\tilde{V})$ and consequently, we obtain a well defined and bijective map $\phi:Q(V)\mapsto S(\tilde{V})$ by $\phi(f)=\tilde{f},$ where 
	$\tilde{f}(v,\alpha)=f(v)+\alpha.$ Now, it is routine to check that $\phi$ is affine as well as $w^*$-$w^*$ homeomorphism. 
\end{proof}

\begin{theorem}
	Let $V$ be a complete order smooth $\infty$-normed space. Then $V$ is an $M$-ideal in $\tilde{V}$ if and only if $V$ is an approximate order unit space.
\end{theorem}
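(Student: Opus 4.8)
The plan is to prove the two implications separately, leveraging the machinery already built in the excerpt. For the ``only if'' direction---the substantive new content---I would start from the hypothesis that $V$ is an $M$-ideal in $\tilde{V}$ and apply Proposition~\ref{M-ideal-Q(V)}: since $\tilde{V}$ is in particular an order unit space (hence an approximate order unit space), $M$-ideals there are characterized by the split-face condition, so $V^{\perp}\cap Q(\tilde{V})$ is a split face of $Q(\tilde{V})$. The key structural fact to bring in is Lemma~\ref{concept of adjoining order unit}: the affine homeomorphism $\phi\colon Q(V)\to S(\tilde{V})$ sending $f\mapsto\tilde{f}$. Under $\phi$, the state space $S(\tilde{V})$ is convex, and I would check that $\phi$ carries the quasi-state structure of $V$ onto the state structure of $\tilde{V}$ in a way compatible with faces and norms, so that a split-face decomposition upstairs transports to a split-face (or convexity) statement about $Q(V)$. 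The point I want to extract is that $S(V)$ becomes convex: then Proposition~\ref{osi-aou} immediately gives that $V$ is an approximate order unit space.

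More concretely, for the ``only if'' direction I would argue as follows. Identify $V^{\ast}$ as a quotient/subspace of $\tilde{V}^{\ast}$ via restriction; the annihilator $V^{\perp}$ in $\tilde{V}^{\ast}$ is the one-dimensional span of the functional $(v,\alpha)\mapsto\alpha$ (call it $\delta$), since $V$ has codimension $1$ in $\tilde{V}$. Because $V$ is an $M$-ideal, $\tilde{V}^{\ast}=V^{\perp}\oplus_{1}W$ for the complementary $L$-summand $W$; restriction to $V$ maps $W$ isometrically onto $V^{\ast}$, and positive functionals on $\tilde{V}$ of norm $1$ fixing the order unit correspond under $\phi^{-1}$ to elements of $Q(V)$. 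The $L$-decomposition then says each $g\in Q(V)$, viewed inside $S(\tilde{V})$, splits as a scalar multiple of the distinguished state (the one coming from $\delta$, i.e.\ the functional vanishing on $V$) plus a piece from $W$, with norms adding. Tracking this decomposition down to $V^{\ast}$ should show the norm is additive on $V^{\ast+}$, equivalently $S(V)$ is convex, and then Proposition~\ref{osi-aou} finishes. The main obstacle I anticipate is bookkeeping the identifications precisely---making sure that the $L$-summand complementary to $V^{\perp}$ really does restrict to all of $V^{\ast}$ with norms preserved, and that the ``$\oplus_1$'' decomposition upstairs is exactly what is needed to conclude convexity of $S(V)$ rather than something weaker; this is where the codimension-one structure of $V$ in $\tilde{V}$ and the explicit formula for $\tilde{V}^{+}$ must be used carefully.

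The ``if'' direction is already done: it is precisely Theorem~\ref{AOU space V is M-ideal in its adjoining order unit}, so I would simply cite it. Thus the theorem reduces to the forward implication, and the proof would be organized as: (i) invoke Proposition~\ref{M-ideal-Q(V)} to get that $V^{\perp}\cap Q(\tilde{V})$ is a split face of $Q(\tilde{V})$; (ii) use Lemma~\ref{concept of adjoining order unit} and the codimension-one description to transport this to a statement about $Q(V)$ and hence about additivity of the norm on $V^{\ast+}$; (iii) conclude $S(V)$ is convex and apply Proposition~\ref{osi-aou}. An alternative, perhaps cleaner route for step (ii): since $V$ is an $M$-ideal in $\tilde{V}$, Theorem~\ref{characterisation of M-ideal by positive cone order smooth infinite normed space} applied with the ambient space $\tilde{V}$ gives $\tilde{V}^{\ast+}=V^{\perp+}\oplus_{1}V^{\perp'+}$ with $V^{\perp'+}$ convex; one then identifies $V^{\perp'+}$ with $V^{\ast+}$ under restriction and shows convexity of $V^{\perp'+}$ forces convexity of $S(V)$. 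Either way, the crux is the identification argument in step (ii).
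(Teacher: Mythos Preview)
Your proposal is correct and follows essentially the same architecture as the paper's proof: cite Theorem~\ref{AOU space V is M-ideal in its adjoining order unit} for the ``if'' direction; for ``only if'', identify $V^{\perp}\subset\tilde{V}^{\ast}$ as the one-dimensional span of the functional $\tilde{0}\colon(v,\alpha)\mapsto\alpha$, use the $M$-ideal hypothesis to get a split-face statement, transport via the affine homeomorphism $\phi\colon Q(V)\to S(\tilde{V})$ of Lemma~\ref{concept of adjoining order unit} to conclude $S(V)$ is convex, and finish with Proposition~\ref{osi-aou}.

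The only noteworthy difference is cosmetic: the paper invokes Theorem~\ref{M-ideal and splits face in A(K)} (the classical Alfsen--Effros characterization, available since $\tilde{V}$ is an order unit space) to obtain that $\{\tilde{0}\}$ is a split face of $S(\tilde{V})$, and then computes directly that $\{\tilde{0}\}_{S(\tilde{V})}^{C}=\phi(S(V))$ --- the computation being that, under $\phi$, the condition $\tilde{0}\notin\face_{S(\tilde{V})}(g)$ becomes $0\notin\face_{Q(V)}(f)$, which holds precisely when $\Vert f\Vert=1$. You instead propose invoking Proposition~\ref{M-ideal-Q(V)} or Theorem~\ref{characterisation of M-ideal by positive cone order smooth infinite normed space} at the level of $Q(\tilde{V})$ or the cone decomposition. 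Either route works; the paper's is marginally cleaner because $\phi$ lands exactly in $S(\tilde{V})$, so no extra passage between $Q(\tilde{V})$ and $S(\tilde{V})$ is needed, and the ``identification argument'' you flag as the crux reduces to the short face computation above.
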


\begin{proof} 
	If $V$ is an approximate order unit space, then by Theorem \ref{AOU space V is M-ideal in its adjoining order unit}, $V$ is an $M$-ideal in $\tilde{V}.$ Conversely, assume that 
	$V$ be an order smooth $\infty$-normed space such that $V$ is an $M$-ideal in $\tilde{V}$. Note that 
	$$V^{\perp} = \{ f \in (\tilde{V})^{\ast}: f(v, 0) = 0 ~\mathrm{for all}~ v \in V \} = \mathbb{R} \tilde{0}$$ 
	where $\tilde{0} \in S(\tilde{V})$ with $\tilde{0}(v, \alpha ) = \alpha$ (as described in the proof of Lemma \ref{concept of adjoining order unit}). Thus $V^{\perp} \cap 
	S(\tilde{V}) = \{ \tilde{0} \}$. Since $V$ is an $M$-ideal in $\tilde{V}$, by Theorem \ref{M-ideal and splits face in A(K)}, we may conclude that $\{ \tilde{0} \}$ is a split face of $S(\tilde{V})$.
	
	Next, we show that $\{0\}_{S(\tilde{V})}^{C}=\phi(S(V))$ so that $\phi(S(V))$ is also a (split) face of $S(\tilde{V})$ where $\phi: Q(V) \to S(\tilde{V})$ is the affine homeomorphism described in the proof of Lemma \ref{concept of adjoining order unit}.  In fact 
	\begin{align*}
	\{\tilde{0}\}_{S(\tilde{V})}^C &= \{g\in S(\tilde{V}): \face_{S(\tilde{V})}(g)\cap \{\tilde{0}\} = \emptyset \}\\
	&=\{g\in S(\tilde{V}): \tilde{0}\notin  \face_{S(\tilde{V})}(g)\}\\
	&=\{\phi(f):f\in Q(V), 0\notin \face_{Q(V)}(f) \}\\
	&=\{\phi(f): f\in S(V)\}\\
	&=\phi(S(V)).
	\end{align*} 
	Thus $S(V)$ is convex. Now by Proposition \ref{osi-aou}, $V$ is an approximate order unit space.
\end{proof}

{\bf Acknowledgements.} {The first author is thankful to the Department of Atomic Energy, Government of India for providing financial support.}

\bibliographystyle{amsplain}

\end{document}